\newcommand*{\transpose}{%
  {\mathpalette\@transpose{}}%
}
\newcommand*{\@transpose}[2]{%
  \raisebox{\depth}{$\m@th#1\intercal$}%
}
\let\OLDthebibliography\thebibliography
\renewcommand\thebibliography[1]{
  \OLDthebibliography{#1}
  \setlength{\parskip}{0pt}
  \setlength{\itemsep}{0pt plus 0.3ex}
}
\preto\tabular{\setcounter{magicrownumbers}{0}}
\newcounter{magicrownumbers}
\preto\tabular{\setcounter{magicrownumbers2}{0}}
\newcounter{magicrownumbers2}
\newtheorem{theorem}{Theorem}[section]
\newtheorem{lemma}[theorem]{Lemma}
\newtheorem{proposition}[theorem]{Proposition}
\newtheorem{corollary}[theorem]{Corollary}
\theoremstyle{definition}
\newtheorem{definition}[theorem]{Definition}
\newtheorem{example}[theorem]{Example}
\theoremstyle{remark}
\newtheorem{remark}[theorem]{Remark}
\newtheorem{question}[theorem]{Question}
\newtheorem{problem}[theorem]{Problem}
\title{Neumaier graphs from cyclotomy with small coherent rank}
\author{Gary Greaves 
  \thanks{School of Physical and Mathematical Sciences, 
  Nanyang Technological University, 
  21 Nanyang Link, Singapore 637371,
 {\tt{gary@ntu.edu.sg}}
 }
 \and
 Zhao Kuang Tan
  \thanks{School of Physical and Mathematical Sciences, 
  Nanyang Technological University, 
  21 Nanyang Link, Singapore 637371,
 \tt{zhaokuan001@e.ntu.edu.sg}}
}
\date{}
\begin{document}

\maketitle
\begin{abstract} 
Using cyclotomy, we construct a new infinite family of Neumaier graphs that includes infinitely many strongly regular graphs. 
Notably, this family conjecturally contains infinitely many graphs with coherent rank~$6$. 
Our construction also provides the first known examples that answer a question posed by Evans, Goryainov, and Panasenko regarding the existence of Neumaier graphs whose nexus is not a power of~$2$. 
In addition, we show that a construction of Greaves and Koolen yields an infinite family of Neumaier graphs with coherent rank~$6$. \end{abstract}

\section{Introduction}

A graph $\Gamma$ of order $v$ is called $k$-\textbf{regular} if each vertex belongs to precisely $k$ edges, a $k$-regular graph is called $(v,k,\lambda)$-\textbf{edge-regular} if each edge belongs to precisely $\lambda$ triangles.
A clique $C$ in $\Gamma$ is called $e$-\textbf{regular} if each vertex outside of $C$ is adjacent to precisely $e > 0$ vertices in $C$. 
A non-complete edge-regular graph is called a \textbf{Neumaier graph} if it has a regular clique.
The study of Neumaier graphs was initiated by Neumaier in 1981.

At the time of his seminal paper~\cite{Neumaier1981}, all known examples of Neumaier graphs were \textbf{strongly regular}, i.e., edge-regular with the additional property that there exists $\mu$ such that every pair of non-adjacent vertices has exactly $\mu$ common neighbors.
This led Neumaier to pose a fundamental question~\cite[Page 248]{Neumaier1981}: are all Neumaier graphs strongly regular?
This question remained open until 2018, when Greaves and Koolen~\cite{GreavesKoolen1} constructed an infinite family of Neumaier graphs that are not strongly regular, thereby answering the question in the negative. A Neumaier graph that is not strongly regular is referred to as a \textbf{strictly Neumaier graph}. Since 2018, several infinite families of strictly Neumaier graphs have been identified~\cite{abiad2023infinite,abiad2021neumaier, evans2021general,EGP19, GreavesKoolen2}.
The \textit{coherent closure} (also known as the \textit{Weisfeiler–Leman closure}) of a graph was originally introduced by Weisfeiler and Leman~\cite{weisfeiler1968reduction} as a tool for distinguishing non-isomorphic graphs. 
For a graph~$\Gamma$, its coherent closure is the minimal coherent algebra containing the adjacency matrix~$A(\Gamma)$. 
This concept is often studied implicitly in the context of distance-regular graphs, where the coherent closure coincides with the Bose–Mesner algebra~\cite{BCN}. 
The study of Schur rings, a special case of coherent algebras, has been applied to solve isomorphism problems for certain Cayley graphs~\cite{muzychuk1999isomorphism}. 
More recently, the coherent closure of a graph has played a central role in the study of Deza graphs~\cite{deza,CHURIKOV22} and graphs with three distinct eigenvalues~\cite{GreavesYip}. 
The \textit{coherent rank} of a graph, defined formally in Section~\ref{sec:cr}, is the rank of its coherent closure. 
Among all non-complete, non-empty graphs, strongly regular graphs attain the smallest possible coherent rank. 
A result of Abiad et al.~\cite{abiad2021neumaier} implies that Neumaier graphs with coherent rank $4$ do not exist. 
However, Neumaier graphs with coherent rank~$6$ do exist. 
In fact, we prove (see Theorem~\ref{thm:cr6inf}) that the second infinite family of strictly Neumaier graphs, described in~\cite{GreavesKoolen2}, contains infinitely many such graphs.

The first infinite family of strictly Neumaier graphs, introduced in~\cite{GreavesKoolen1}, contains only two graphs of coherent rank~$6$: specifically, its smallest members with $28$ and $72$ vertices. 
All larger graphs in this family have coherent rank strictly greater than~$6$. 
Motivated by this, we construct a new infinite family of Neumaier graphs (see Theorem~\ref{thm:main}) that conjecturally contains infinitely many strictly Neumaier graphs with coherent rank~$6$.

Our new construction is an infinite family of Cayley graphs over products of finite fields, with connection sets derived from cyclotomic classes. Naturally, certain known infinite families of strongly regular graphs arise as subfamilies of this construction (see Remark~\ref{rem:srgOA} and Remark~\ref{rem:uniformsrg}).

We say that a Neumaier graph $\Gamma$ has \textbf{parameters} $(v,k,\lambda; e, s)$ if it is $(v,k,\lambda)$-edge-regular and has an $e$-regular clique of order $s$.
By \cite[Theorem 1.1]{Neumaier1981}, the regular cliques of $\Gamma$ are precisely its maximal cliques, which are each $e$-regular and of order $s$. 
The parameter $e$ is called the \textbf{nexus} of $\Gamma$.
The vast majority of currently known constructions of Neumaier graphs have nexus $e=1$~\cite{GreavesKoolen1,GreavesKoolen2,abiad2023infinite, evans2021general}.
Evans et al.~\cite{EGP19} produced the first examples of Neumaier graphs that have nexus $e > 1$. 
However, the nexus of each graph in their families is a power of $2$.
Evans, Goryainov, and Panasenko~\cite{EGP19} therefore asked if there exist Neumaier graphs whose nexus is not a power of $2$.
Our new construction contains examples of Neumaier graphs that answer their question in the affirmative.

The paper is organised as follows.
In Section~\ref{sec:cr}, we introduce the coherent closure of a graph.
Afterwards, we show that the family of Neumaier graphs in \cite{GreavesKoolen2} contain an infinite subfamily of graphs that have coherent rank $6$.
In Section~\ref{sec:newfam}, we introduce a new family of Cayley graphs and provide a sufficient condition which guarantees that these graphs are Neumaier graphs.
We show that our new family of Cayley graphs contains infinitely many strongly regular graphs, which have previously been discovered independently.
In Section~\ref{sec:6beyond}, we restrict our consideration to the graphs of our new construction that have coherent ranks 6 and 7.  
We leverage closed-form expressions for cyclotomic numbers of low order to obtain simple sufficient conditions for the existence of Neumaier graphs having coherent ranks 6 and 7.
We conclude the paper with a conjecture about the sum of products of cyclotomic numbers, which may be of independent interest.
\section{Coherent closure of a graph}
\label{sec:cr}
In this section, we define the coherent closure and the coherent rank of a graph.

\subsection{Coherent algebras and configurations}

Throughout, we denote by $I_n$, $J_n$ and $\mathbf 1_n$, the $n \times n$ identity matrix, the $n \times n$ all-ones matrix, and the $n \times 1$ all-ones matrix (or vector), respectively.
When the order of these matrices is clear from context, we merely write $I$, $J$, and $\mathbf 1$ respectively.

Let $X$ be a finite set and let $R = \{R_0,\dots,R_{\mathnormal r-1}\}$ be a set of binary relations on $X$.
For each $R_i$, the corresponding \textbf{relation matrix} $\mathsf A_i \in \operatorname{Mat}_{X}(\{0,1\})$ is defined such that its $(x,y)$ entry $[A_i]_{xy}$ is $1$ if $(x,y) \in R_i$ and $0$ otherwise.
Suppose that 
\begin{enumerate}
    \item[(CC1)] $\displaystyle \sum_{i=0}^{\mathnormal r-1} \mathsf A_i = J$;
    \item[(CC2)] For each $i \in \{0,\dots,{\mathnormal r-1}\}$, there exists $j \in \{0,\dots,{\mathnormal r-1}\}$ such that $\mathsf A_i^\transpose = \mathsf A_j$;
    \item[(CC3)] There exists a subset $\mathfrak D \subset \{0,\dots,{\mathnormal r-1}\}$ such that $\displaystyle \sum_{i \in \mathfrak D} \mathsf A_i = I$;
    \item[(CC4)] $\displaystyle \mathsf A_i \mathsf A_j = \sum_{k=0}^{\mathnormal r-1} p_{i,j}^k \mathsf A_k$, for each $i,j \in \{0,\dots,{\mathnormal r-1}\}$.
\end{enumerate}
Then $({X},{R})$ is called a \textbf{coherent configuration} of \textbf{rank} ${\mathnormal r}$.
A coherent configuration is called \textbf{homogeneous} if $|\mathfrak D| = 1$, \textbf{symmetric} if $\mathsf A_i  = \mathsf A_i^\transpose$ for each $i \in \{0,\dots,r-1\}$, and \textbf{commutative} if $\mathsf A_i\mathsf A_j  = \mathsf A_j\mathsf A_i$ for each $i,j \in \{0,\dots,r-1\}$.
If a coherent configuration is commutative, then it must be homogeneous, and if it is symmetric, then it must be commutative~\cite[3.8]{Higman1987}.
A homogeneous coherent configuration is called an \textbf{association scheme}.
A \textbf{coherent algebra} is a matrix algebra $\mathcal A \subset \operatorname{Mat}_{X}(\mathbb C)$ that satisfies the following axioms.
\begin{itemize}
    \item[(A1)] $I, J \in \mathcal A$;
    \item[(A2)] $M^\transpose \in \mathcal A$ for each $M \in \mathcal A$;
    \item[(A3)] $MN \in \mathcal A$ and $M \circ N \in \mathcal A$ for each $M,N \in \mathcal A$, where $\circ$ denotes the entrywise product.
\end{itemize}
Each coherent algebra $\mathcal A$ has a unique basis of $\{0,1\}$-matrices $\{\mathsf  A_0,\dots,\mathsf A_{\mathnormal r-1}\}$ that corresponds to a coherent configuration $({X},{R}(\mathcal A))$ where ${R}(\mathcal A) = \{R_0,\dots,R_{r-1}$\} with $(x,y) \in R_i$ if and only if the $(x,y)$-entry $[\mathsf A_i]_{xy} = 1$ for each $i \in \{0,\dots,r-1\}$.
In a slight abuse of language, we refer to the set of $\{0,1\}$-matrices $\{\mathsf  A_0,\dots,\mathsf A_{\mathnormal r-1}\}$ as the \textbf{underlying coherent configuration} of $\mathcal A$.
A coherent algebra $\mathcal A$ is called \textbf{homogeneous} (resp.\ \textbf{symmetric}) if its underlying coherent configuration is homogeneous (resp.\ symmetric).

The following lemma, which is referred to as the Schur-Wielandt principle, is employed below to obtain lower bounds for the rank of a coherent algebra.

\begin{lemma}[{\cite[Theorem 2.3.10]{cc}}]
    \label{lem:wielandtcc}
    Let $\mathcal A$ be a coherent algebra and let $A \in \mathcal A$.
    For $b \in \mathbb C$, define the matrix $B$ such that
    \[
    [B]_{xy} = \begin{cases}
        1, & \text{ if $[A]_{xy} = b$;} \\
        0, & \text{otherwise.}
    \end{cases}
    \]
    Then $B \in \mathcal A$.
\end{lemma}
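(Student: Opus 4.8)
The plan is to use the entrywise (Hadamard) closure of $\mathcal A$ together with a Lagrange-interpolation argument, so that $B$ is realised as an entrywise polynomial in $A$. The starting observation is that, since $X$ is finite, the matrix $A$ has only finitely many distinct entries; write them as $b_1,\dots,b_m$. If the prescribed value $b$ is not among them, then $B$ is the zero matrix, which lies in $\mathcal A$ because $\mathcal A$ is a matrix algebra; so I may assume $b = b_1$, say.

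Next I would construct the Lagrange polynomial $p(t) = \prod_{j=2}^{m} (t-b_j)/(b_1-b_j)$, which satisfies $p(b_1)=1$ and $p(b_j)=0$ for $2 \le j \le m$. The key idea is to evaluate $p$ on $A$ using the entrywise product in place of ordinary multiplication: replacing each monomial $t^k$ by the Hadamard power $A^{\circ k}$ and each constant $c$ by $cJ$. Denote the resulting matrix by $p^{\circ}(A)$. Because the Hadamard product acts entrywise, one checks that $[p^{\circ}(A)]_{xy} = p([A]_{xy})$ for every $(x,y)$, and this equals $1$ exactly when $[A]_{xy}=b_1=b$ and $0$ otherwise. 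Hence $p^{\circ}(A) = B$.

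It remains to observe that $p^{\circ}(A) \in \mathcal A$. This follows directly from the axioms: $J \in \mathcal A$ by (A1), $A \in \mathcal A$ by hypothesis, and by (A3) the algebra is closed under the entrywise product, while being a complex algebra makes it closed under scalar multiples and sums. Therefore every factor $(A - b_j J)$ and each Hadamard product of such factors lies in $\mathcal A$, giving $B = p^{\circ}(A) \in \mathcal A$.

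I do not anticipate a genuine obstacle here; the only points to get right are to use the entrywise product (A3) rather than ordinary matrix multiplication, and to record that the finiteness of $X$ guarantees only finitely many distinct entries so that the interpolation is well defined. As an alternative one-line argument, one could instead invoke the $\{0,1\}$-basis $\{\mathsf A_0,\dots,\mathsf A_{r-1}\}$ of $\mathcal A$: writing $A = \sum_i c_i \mathsf A_i$, the disjoint supports of the $\mathsf A_i$ force $B = \sum_{i\,:\,c_i = b} \mathsf A_i \in \mathcal A$.
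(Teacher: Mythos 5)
Your proof is correct. Note that the paper does not prove Lemma~\ref{lem:wielandtcc} at all — it is quoted from the literature (this is the Schur--Wielandt principle) — so there is no in-paper argument to compare against; your write-up supplies a proof the paper omits. Both of your routes work. The Hadamard--Lagrange argument realises $B$ as a scalar multiple of the entrywise product $\bigl(A-b_2J\bigr)\circ\cdots\circ\bigl(A-b_mJ\bigr)$; each factor lies in $\mathcal A$ by (A1), linearity, and (A3), and the identity $[p^{\circ}(A)]_{xy}=p([A]_{xy})$ holds because the Hadamard operations act entrywise. The degenerate cases are also covered: if $b$ is not an entry of $A$ then $B=0\in\mathcal A$, and if $A$ has a single distinct entry the empty product gives $J\in\mathcal A$. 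This argument has the virtue of using only the axioms (A1)--(A3). Your alternative one-line argument — expanding $A=\sum_i c_i\mathsf A_i$ in the $\{0,1\}$-basis, whose members have pairwise disjoint supports summing to $J$ by (CC1), so that $B=\sum_{i\,:\,c_i=b}\mathsf A_i\in\mathcal A$ — is the standard textbook proof and is shorter, but it presupposes the existence of that basis, a structural fact which is itself typically established by precisely the interpolation argument you gave first.
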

\subsection{Coherent closure and coherent rank}

Clearly, the intersection of any two coherent algebras is itself a coherent algebra.
We can thus define the \textbf{coherent closure} $\mathcal {W}(\Gamma)$ of a graph $\Gamma$ to be the minimal coherent algebra that contains the adjacency matrix $A(\Gamma)$ of $\Gamma$.
We denote by $\mathcal {C}(\Gamma) = \{ \mathsf A_0,\dots,\mathsf A_{\mathnormal r-1} \}$ the underlying coherent configuration of $\mathcal {W}(\Gamma)$.

Define the \textbf{coherent rank} of $\Gamma$ to be the rank of its coherent closure $\mathcal {W}(\Gamma)$, which we denote by $\operatorname{rk} \mathcal W(\Gamma)$.
The coherent rank of a complete or empty graph on at least $2$ vertices is $2$ and the coherent rank of a strongly regular graph is $3$.

\begin{lemma}
\label{lem:rankeig}
    Let $\Gamma$ be a graph such that $A = A(\Gamma)$ has precisely $r$ distinct eigenvalues.
    Then $\operatorname{rk} \mathcal W(\Gamma) \geqslant r$.
    In the case of equality, we have $\mathcal W(\Gamma) = \langle I, A, \dots, A^{r-1} \rangle$.
\end{lemma}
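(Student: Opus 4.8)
The plan is to compare $\mathcal W(\Gamma)$ with the ordinary matrix algebra $\mathbb C[A] = \langle I, A, A^2, \dots \rangle$ generated by the adjacency matrix $A$. First I would record two structural facts. On one hand, by axioms (A1) and (A3) the coherent algebra $\mathcal W(\Gamma)$ contains $I$ and is closed under matrix multiplication, and it contains $A$ by the definition of the coherent closure; hence it contains every power $A^m$ and every $\mathbb C$-linear combination of such powers. Therefore $\mathbb C[A] \subseteq \mathcal W(\Gamma)$. On the other hand, since the distinguished $\{0,1\}$-basis $\{\mathsf A_0, \dots, \mathsf A_{r-1}\}$ of a coherent algebra is in particular a basis for it as a $\mathbb C$-vector space, the coherent rank of $\Gamma$ equals $\dim_{\mathbb C}\mathcal W(\Gamma)$.

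Next I would compute $\dim_{\mathbb C}\mathbb C[A]$, which is exactly the degree of the minimal polynomial of $A$. Because $A$ is the adjacency matrix of a graph, it is real symmetric and hence diagonalizable, so its minimal polynomial is $\prod_i (x - \theta_i)$ over the distinct eigenvalues $\theta_0, \dots, \theta_{r-1}$ and has degree precisely $r$. Consequently $I, A, \dots, A^{r-1}$ are linearly independent and $A^r \in \langle I, A, \dots, A^{r-1}\rangle$, so $\dim_{\mathbb C}\mathbb C[A] = r$ with $\{I, A, \dots, A^{r-1}\}$ forming a basis.

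Combining these, from $\mathbb C[A]\subseteq \mathcal W(\Gamma)$ I obtain $\operatorname{rk}\mathcal W(\Gamma) = \dim_{\mathbb C}\mathcal W(\Gamma) \geqslant \dim_{\mathbb C}\mathbb C[A] = r$, which is the claimed lower bound. For the equality case, if $\operatorname{rk}\mathcal W(\Gamma) = r$, then $\mathcal W(\Gamma)$ and its subspace $\mathbb C[A]$ share the same finite dimension $r$, which forces $\mathcal W(\Gamma) = \mathbb C[A] = \langle I, A, \dots, A^{r-1}\rangle$, as required.

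I do not anticipate a serious obstacle, as the argument is essentially a dimension count. The one point that warrants care is the identification of the coherent rank (a combinatorial count of the basis $\{0,1\}$-matrices) with the vector-space dimension of the algebra, together with the use of diagonalizability of the symmetric matrix $A$ to pin down the degree of its minimal polynomial as exactly the number $r$ of distinct eigenvalues; without symmetry one could only bound the degree by the matrix size, and the clean equality $\dim_{\mathbb C}\mathbb C[A] = r$ would fail.
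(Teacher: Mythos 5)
Your proposal is correct and follows essentially the same route as the paper: the paper's proof also rests on the linear independence of $I, A, \dots, A^{r-1}$ inside $\mathcal W(\Gamma)$ (which, as you spell out, comes from the degree-$r$ minimal polynomial of the diagonalizable matrix $A$) and the identification of coherent rank with vector-space dimension, concluding the equality case by noting these matrices then form a basis. You have merely made explicit the standard facts the paper leaves implicit.
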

\begin{proof}
    The matrices $I, A, A^2 ,\dots, A^{r-1}$ form a linearly independent subset of $\mathcal W(\Gamma)$.
    Hence, the rank of $\mathcal W(\Gamma)$ is at least $r$.
    In the case of equality, the matrices $I, A, A^2 ,\dots, A^{r-1}$ form a basis for $\mathcal W(\Gamma)$.
\end{proof}
Graphs for which we have equality in Lemma~\ref{lem:rankeig} are called \textbf{quotient-polynomial graphs}~\cite{quopol}.
Let $A$ be the adjacency matrix of a quotient-polynomial graph with precisely $r$ distinct eigenvalues.
Then $\mathcal {W}(\Gamma)  = \langle I, A, \dots, A^{r-1} \rangle$ and, consequently, each matrix in $\mathcal C(\Gamma)$ is a polynomial in $A$.
The graphs we construct in Section~\ref{sec:6beyond} provide new examples of quotient-polynomial graphs with coherent ranks 6 and 7.

\subsection{Small coherent rank}

By Lemma~\ref{lem:rankeig}, any graph with coherent rank 4 must have at most four distinct eigenvalues.
Hence, the following theorem is an immediate consequence of \cite[Theorem 3.1]{abiad2021neumaier}, which states that there is no Neumaier graph that has precisely four distinct eigenvalues.

\begin{theorem}
\label{thm:smallrank}
    Let $\Gamma$ be a Neumaier graph with coherent rank at most 4.
    Then $\Gamma$ is strongly regular.
\end{theorem}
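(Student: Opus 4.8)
The plan is to reduce the problem directly to the eigenvalue-counting result already cited. The statement concerns Neumaier graphs with coherent rank at most $4$, and Theorem~\ref{thm:smallrank} is explicitly flagged in the excerpt as an immediate consequence of \cite[Theorem 3.1]{abiad2021neumaier}, which asserts that no Neumaier graph has exactly four distinct eigenvalues. So the entire burden of the proof is to connect the coherent rank bound to a bound on the number of distinct eigenvalues, and then to rule out the remaining small cases.

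First I would invoke Lemma~\ref{lem:rankeig} in its contrapositive form: if $A = A(\Gamma)$ has precisely $r$ distinct eigenvalues, then $\operatorname{rk}\mathcal{W}(\Gamma) \geqslant r$. Hence a Neumaier graph $\Gamma$ with $\operatorname{rk}\mathcal{W}(\Gamma) \leqslant 4$ has at most four distinct eigenvalues. Since $\Gamma$ is non-complete and edge-regular (being Neumaier), it is in particular regular and connected with at least two distinct eigenvalues, so the number of distinct eigenvalues is $2$, $3$, or $4$.

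Next I would dispose of each case. The case of exactly four distinct eigenvalues is excluded outright by \cite[Theorem 3.1]{abiad2021neumaier}. A connected regular graph with exactly two distinct eigenvalues is complete, which contradicts the non-completeness built into the definition of a Neumaier graph; so this case cannot occur either. That leaves exactly three distinct eigenvalues: a connected regular graph with exactly three distinct eigenvalues is strongly regular, which is precisely the desired conclusion. Collecting these, the only surviving possibility for a Neumaier graph of coherent rank at most $4$ is that it is strongly regular.

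I do not anticipate a genuine obstacle here, since the heavy lifting is done by the cited eigenvalue result; the only point requiring a little care is the bookkeeping of the small cases, namely confirming that ``two distinct eigenvalues'' forces completeness (ruled out by the Neumaier hypothesis) and that ``three distinct eigenvalues'' for a connected regular graph is equivalent to strong regularity. Both are standard facts about the spectra of regular graphs, so the proof is essentially a clean application of Lemma~\ref{lem:rankeig} together with \cite[Theorem 3.1]{abiad2021neumaier}.
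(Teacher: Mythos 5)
Your proof is correct and follows essentially the same route as the paper, which derives the theorem from Lemma~\ref{lem:rankeig} together with \cite[Theorem 3.1]{abiad2021neumaier}. The only difference is that you explicitly carry out the standard spectral bookkeeping for the two- and three-eigenvalue cases (completeness, respectively strong regularity, of connected regular graphs), which the paper leaves implicit in calling the result ``immediate.''
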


The following question is open. 
\begin{question}
\label{prob:5}
    Does there exist a Neumaier graph $\Gamma$ with coherent rank 5?
\end{question}

Abiad et al.~\cite[Corollary 2.33]{abiad2021neumaier} showed the adjacency matrix of a strictly Neumaier graph cannot be a relation matrix of a commutative\footnote{\cite[Corollary 2.33]{abiad2021neumaier} states symmetric association scheme, but \cite[Theorem 2.32]{abiad2021neumaier} implies the adjacency matrix of a strictly Neumaier graph cannot be a class matrix of a commutative association scheme.} association scheme.
We restate their result in the context of the coherent closure, as follows.
\begin{theorem}[cf.~{\cite[Corollary 2.33]{abiad2021neumaier}}]
\label{thm:relationSRG}
    Let $\Gamma$ be a Neumaier graph such that $\mathcal W(\Gamma)$ is commutative.
    Suppose that $A(\Gamma) \in \mathcal C(\Gamma)$.
    Then $\Gamma$ is strongly regular.
\end{theorem}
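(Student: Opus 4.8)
The plan is to reduce the statement directly to the result of Abiad et al.\ by translating the coherent-algebra hypotheses into the language of association schemes. First I would observe that, since $\mathcal W(\Gamma)$ is assumed commutative, its underlying coherent configuration $\mathcal C(\Gamma)$ is commutative, and hence homogeneous by the fact recorded above (from~\cite[3.8]{Higman1987}) that every commutative coherent configuration is homogeneous. Thus $\mathcal C(\Gamma)$ is precisely a commutative association scheme on the vertex set of $\Gamma$.

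Next I would exploit the hypothesis $A(\Gamma) \in \mathcal C(\Gamma)$. Since $\mathcal C(\Gamma) = \{\mathsf A_0,\dots,\mathsf A_{r-1}\}$ is the unique $\{0,1\}$-basis of $\mathcal W(\Gamma)$, this hypothesis says that $A(\Gamma)$ equals one of the basis matrices $\mathsf A_i$; equivalently, $A(\Gamma)$ is a class (relation) matrix of the commutative association scheme $\mathcal C(\Gamma)$. This is exactly the configuration to which the cited theorem applies.

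At this point the result of Abiad et al.\ closes the argument in contrapositive form. By~\cite[Corollary 2.33]{abiad2021neumaier} (see also~\cite[Theorem 2.32]{abiad2021neumaier} and the footnote above), the adjacency matrix of a strictly Neumaier graph can never be a class matrix of a commutative association scheme. Hence, since $A(\Gamma)$ \emph{is} a class matrix of the commutative association scheme $\mathcal C(\Gamma)$, the graph $\Gamma$ cannot be strictly Neumaier. As $\Gamma$ is a Neumaier graph by assumption, and a strictly Neumaier graph is by definition a Neumaier graph that fails to be strongly regular, we conclude that $\Gamma$ is strongly regular.

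The proof is essentially a dictionary translation, so there is no deep obstacle. The only step requiring genuine care is the first one: correctly invoking that commutativity of the coherent algebra forces homogeneity of the underlying configuration, so that $\mathcal C(\Gamma)$ legitimately qualifies as the \emph{commutative association scheme} whose class matrices the quoted theorem concerns. Once that identification is in place, everything else follows immediately from applying~\cite[Corollary 2.33]{abiad2021neumaier} in contrapositive form.
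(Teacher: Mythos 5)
Your proposal is correct and follows exactly the route the paper intends: the paper gives no separate proof of this theorem, presenting it as a restatement of \cite[Corollary 2.33]{abiad2021neumaier} (via \cite[Theorem 2.32]{abiad2021neumaier}, per the footnote), and your argument simply makes the implicit dictionary explicit --- commutativity of $\mathcal W(\Gamma)$ forces homogeneity of $\mathcal C(\Gamma)$ by the cited fact from Higman, so $\mathcal C(\Gamma)$ is a commutative association scheme with $A(\Gamma)$ as a relation matrix, and the contrapositive of the cited result rules out $\Gamma$ being strictly Neumaier. No gaps; this matches the paper's reasoning.
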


Theorem~\ref{thm:relationSRG} motivates the following question.

\begin{question}
\label{prob:class}
    Does there exist a strictly Neumaier graph $\Gamma$ such that $A(\Gamma) \in \mathcal C(\Gamma)$?
\end{question}

\begin{theorem}
\label{thm:rank6sym}
    Let $\Gamma$ be a Neumaier graph with coherent rank at most $6$.
    Then the coherent closure $\mathcal W(\Gamma)$ is symmetric.
\end{theorem}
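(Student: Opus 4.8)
The plan is to reduce the statement to a claim about commutativity and then to eliminate the only surviving non-commutative structure by a Wedderburn dimension count.

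First I would use that a Neumaier graph is edge-regular and hence regular, so that $I \in \mathcal C(\Gamma)$ and $\mathcal W(\Gamma)$ is homogeneous. The pivotal reduction is the equivalence, for any graph $\Gamma$, that $\mathcal W(\Gamma)$ is symmetric if and only if it is commutative. The forward implication is already recorded in the excerpt. For the converse, assume $\mathcal W(\Gamma)$ is commutative and let $\mathcal V = \{M \in \mathcal W(\Gamma) : M^\transpose = M\}$ be its subspace of symmetric matrices. Then $I, J \in \mathcal V$ and $A = A(\Gamma) \in \mathcal V$, and $\mathcal V$ is closed under transpose and the Schur product; crucially, commutativity gives $(MN)^\transpose = N^\transpose M^\transpose = NM = MN$ for $M, N \in \mathcal V$, so $\mathcal V$ is also closed under ordinary multiplication. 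Thus $\mathcal V$ is a coherent algebra containing $A$, and minimality of the coherent closure forces $\mathcal V = \mathcal W(\Gamma)$; that is, every matrix in $\mathcal W(\Gamma)$ is symmetric. It therefore suffices to prove that $\mathcal W(\Gamma)$ is commutative whenever $\operatorname{rk}\mathcal W(\Gamma) \le 6$.

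Next I would regard $\mathcal W(\Gamma)$ as a semisimple $*$-algebra — it is closed under conjugate transpose, which coincides with transpose since all entries are real — and invoke its Wedderburn decomposition $\mathcal W(\Gamma) \cong \bigoplus_j \operatorname{Mat}_{d_j}(\mathbb C)$ with $\sum_j d_j^2 = \operatorname{rk}\mathcal W(\Gamma) \le 6$. Homogeneity makes $\tfrac1v J$ a central rank-one idempotent, contributing a $1 \times 1$ principal block, and non-commutativity is equivalent to some $d_j \ge 2$. A block with $d_j \ge 3$ would already force $\sum_j d_j^2 \ge 1 + 9 > 6$, so the only non-commutative possibilities are a single $2 \times 2$ block alongside $1 \times 1$ blocks, namely the pattern $\{1,2\}$ at rank $5$ or $\{1,1,2\}$ at rank $6$. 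The rank-$5$ pattern can be discarded by quoting that homogeneous coherent configurations of rank at most $5$ are commutative, leaving only the rank-$6$ pattern $\{1,1,2\}$ — precisely the shape realised by the thin scheme of the symmetric group on three symbols.

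The crux, which I expect to be the main obstacle, is to rule out this single $2 \times 2$ block using that the generator $A$ is a real symmetric $\{0,1\}$-matrix. Here I would examine the real (Frobenius–Schur) structure that the $\{0,1\}$-basis imposes on the $\operatorname{Mat}_2(\mathbb C)$ block and track the action of $A$ and its spectral idempotents on the associated two-dimensional irreducible module. The aim is to show that the symmetrisation $\{A_i + A_i^\transpose\}$ of the underlying configuration already closes up into a coherent, hence symmetric, subconfiguration containing $A$, contradicting that $A$ generates the non-commutative algebra. The delicate point is that the Schur–Wielandt principle (Lemma~\ref{lem:wielandtcc}), applied to products such as $(A_i + A_i^\transpose)^2$, can in principle separate $A_i$ from $A_i^\transpose$ and thereby recover the asymmetric relations; the argument must therefore exploit the rigid valency and multiplicity constraints of the $\{1,1,2\}$ pattern, together with the edge-regularity and regular-clique hypotheses defining a Neumaier graph, to show that no such separation occurs.
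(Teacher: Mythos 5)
Your opening reduction is correct and even elegant: if $\mathcal W(\Gamma)$ is commutative, its subspace $\mathcal V$ of symmetric matrices contains $I$, $J$, $A(\Gamma)$ and is closed under transposition, Hadamard products and (using commutativity) ordinary products, so it is a coherent algebra and minimality forces $\mathcal V = \mathcal W(\Gamma)$. The Wedderburn count is also sound, granted homogeneity. But there is a genuine gap at exactly the point you yourself call the crux: the pattern $\{1,1,2\}$ is never ruled out. What you offer there is a plan (``examine the Frobenius--Schur structure \dots\ the aim is to show that the symmetrisation closes up''), immediately followed by an admission of the obstruction to that plan (Schur--Wielandt may re-separate $\mathsf A_i$ from $\mathsf A_i^\transpose$); no argument is actually supplied, and the Neumaier hypothesis---which must enter somewhere, since graphs in general can have non-symmetric coherent closures---never appears concretely. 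The missing step can in fact be closed inside your framework, but by an eigenvalue count rather than by symmetrisation: a commutative subalgebra of $\mathbb C \oplus \mathbb C \oplus \operatorname{Mat}_2(\mathbb C)$ has dimension at most $4$, and $\langle I, A, A^2, \dots \rangle$ is a commutative subalgebra of dimension equal to the number $m$ of distinct eigenvalues of $A$, so $m \leqslant 4$. A Neumaier graph is connected, regular and non-complete, so $m \geqslant 3$; if $m = 3$ then $\Gamma$ is strongly regular and its closure has rank $3$, not $6$; and $m = 4$ is impossible by \cite[Theorem 3.1]{abiad2021neumaier}. That exclusion of four eigenvalues is the one place where the Neumaier hypothesis is indispensable, and it is also the pivot of the paper's own proof, which is more elementary than your route: in the critical case ($A$ with five distinct eigenvalues and rank $6$) one writes $\mathcal W(\Gamma) = \langle I, A, \dots, A^4, B \rangle$, notes by minimality that $\langle I, A, \dots, A^4 \rangle$ cannot be Hadamard-closed, and observes that a Hadamard product of symmetric matrices is symmetric and lies in $cB + \langle I, A, \dots, A^4 \rangle$ with $c \neq 0$, forcing $B$, and hence all of $\mathcal W(\Gamma)$, to be symmetric. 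Your representation-theoretic scaffolding (Wedderburn blocks, Higman's rank-$5$ theorem) is fine but does not by itself touch this decisive step.

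A secondary gap: the assertion that regularity of $\Gamma$ makes $\mathcal W(\Gamma)$ homogeneous is false in general---regular graphs, even connected ones, can have coherent closures whose diagonal splits into several fibres, and for a generic regular graph the closure is very far from homogeneous. Homogeneity does hold in your setting, but it needs the rank hypothesis: with $t \geqslant 2$ fibres the rank is at least $t + t(t-1)$ plus one for each fibre of size at least $2$, and chasing the few configurations compatible with rank at most $6$ (necessarily $t = 2$ with trivial structure inside and between fibres) leaves only complete split graphs and complete bipartite graphs; the former are never regular and non-complete, and the latter have homogeneous closure of rank $3$, a contradiction either way. Without homogeneity, $\tfrac{1}{v}J$ need not be central and your block analysis, including the appeal to Higman's theorem at rank $5$, does not get started. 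Both gaps are repairable, but as written the proposal establishes the theorem only up to its hardest case.
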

\begin{proof}
    Let $A = A(\Gamma)$.
    If $A$ has less than five distinct eigenvalues, then, by \cite[Theorem 3.1]{abiad2021neumaier}, the graph $\Gamma$ is strongly regular and hence $\mathcal W(\Gamma) = \langle I, A, J-I-A\rangle$ is symmetric.
    If $A$ has at least six eigenvalues, then, by Lemma~\ref{lem:rankeig}, $A$ must have precisely six distinct eigenvalues with $\mathcal W(\Gamma) = \langle I, A, \dots, A^5 \rangle$ being clearly symmetric.
    It remains to treat the case where $A$ has precisely five distinct eigenvalues.
    
    Let $\mathcal C(\Gamma) = \left \{\mathsf A_0,\dots,\mathsf A_5 \right \}$.
    Suppose (for a contradiction) that $\mathcal W(\Gamma) = \langle \mathsf A_0,\dots, \mathsf A_5\rangle$ is not symmetric.
    Since $\mathcal W(\Gamma)$ contains all powers of $A$, we can write $\mathcal W(\Gamma) = \langle I,A,\dots, A^4, B\rangle$, for some matrix $B$.
    The matrix $B$ must be non-symmetric.
    Since $\langle I,A,\dots, A^4\rangle$ is not closed under Hadamard multiplication, there must exist matrices $X, Y \in \langle I,A,\dots, A^4\rangle$ such that $X \circ Y \not \in \langle I,A,\dots, A^4\rangle$.
    Obviously, $X \circ Y$ is symmetric.
    On the other hand, $X \circ Y$ must be in the coset $cB + \langle I,A,\dots, A^4\rangle$ for some scalar $c \ne 0$.
    Thus, $B$ must be symmetric, from which we obtain a contradiction.
\end{proof}

We define the \textbf{support} of $\Gamma$ as the subset of $S \subset \mathcal C(\Gamma)$ such that the adjacency matrix $A(\Gamma)$ is equal to the sum of the elements in $S$.
\begin{proposition}
\label{pro:suppCard}
    Let $\Gamma$ be a Neumaier graph with coherent rank $6$.
    Then the cardinality of the support of $\Gamma$ is $2$ or $3$.
\end{proposition}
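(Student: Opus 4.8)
The plan is to work entirely inside the underlying coherent configuration $\mathcal C(\Gamma) = \{\mathsf A_0, \dots, \mathsf A_5\}$ and to pin down the cardinality of $S$ by excluding every value other than $2$ and $3$. First I would record the structural consequences of the rank being $6$. By Theorem~\ref{thm:rank6sym}, $\mathcal W(\Gamma)$ is symmetric, hence commutative and therefore homogeneous; thus exactly one basis matrix equals the identity, say $\mathsf A_0 = I$. Because the basis $\{0,1\}$-matrices have pairwise disjoint supports summing to $J$, expanding $A = A(\Gamma) = \sum_i c_i \mathsf A_i$ forces every $c_i \in \{0,1\}$, so $A = \sum_{i \in S}\mathsf A_i$ for a well-defined $S \subseteq \{1,\dots,5\}$, where the diagonal index $0$ is excluded since $A$ has zero diagonal. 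This reduces the proposition to showing $|S| \notin \{0,1,4,5\}$.

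The two extreme cases are immediate: $|S| = 0$ gives the empty graph and $|S| = 5$ gives $A = J - I$, the complete graph, both of which are excluded since a Neumaier graph is non-empty and non-complete. The case $|S| = 1$ is exactly the hypothesis $A \in \mathcal C(\Gamma)$ of Theorem~\ref{thm:relationSRG}; since $\mathcal W(\Gamma)$ is commutative, that theorem forces $\Gamma$ to be strongly regular, contradicting $\operatorname{rk}\mathcal W(\Gamma) = 6 > 3$. Hence $|S| \geqslant 2$.

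The remaining, and decisive, case is $|S| = 4$. Here the complement satisfies $A(\bar\Gamma) = J - I - A = \mathsf A_j$ for the single leftover index $j \in \{1,\dots,5\}\setminus S$. The key observation is that a single relation of a symmetric association scheme is an edge-regular graph: any two $R_j$-related vertices have exactly $p_{j,j}^j$ common neighbours in the graph $\mathsf A_j = \bar\Gamma$. I would then translate this edge-regularity of $\bar\Gamma$ back to $\Gamma$ by a routine double count: for non-adjacent vertices $x,y$ of $\Gamma$ (equivalently, adjacent in $\bar\Gamma$), the number of their common neighbours in $\bar\Gamma$ equals $v - 2 - 2k + \mu_{xy}$, where $\mu_{xy}$ denotes the number of common neighbours of $x$ and $y$ in $\Gamma$. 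Constancy of the former quantity, which is the edge-regularity of $\bar\Gamma$, therefore forces $\mu_{xy}$ to take a constant value across all non-adjacent pairs. Combined with the edge-regularity already assumed for the Neumaier graph $\Gamma$, this makes $\Gamma$ strongly regular, once more contradicting $\operatorname{rk}\mathcal W(\Gamma) = 6$. Excluding $|S| = 4$ in this way is the heart of the argument; the main obstacle is to correctly identify $\bar\Gamma$ as the single relation $\mathsf A_j$ and to make the clean passage, via the intersection number $p_{j,j}^j$ and the complement count, from the edge-regularity of $\bar\Gamma$ to a constant value of $\mu$ for $\Gamma$. The exclusion of the smaller values being handled by Theorem~\ref{thm:relationSRG} and non-triviality, this completes the determination that $|S| \in \{2,3\}$.
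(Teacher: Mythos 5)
Your proposal is correct, and its skeleton coincides with the paper's: both use Theorem~\ref{thm:rank6sym} to obtain symmetry (hence commutativity and homogeneity), exclude $|S|=1$ via Theorem~\ref{thm:relationSRG}, exclude $|S|=5$ by non-completeness, and make $|S|=4$ the decisive case. The genuine difference is the mechanism for $|S|=4$. The paper works directly with $A^2$: since $A^2\in\mathcal W(\Gamma)$, it is a linear combination of $\mathsf A_0,\dots,\mathsf A_5$, and edge-regularity of $\Gamma$ pins the coefficient of $I$ to $k$ and of each of the four support matrices to $\lambda$, so $A^2=kI+\lambda A+\mu\mathsf A_5$; substituting $\mathsf A_5=J-I-A$ yields the strongly regular equation in one step. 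You instead pass to the complement: $\bar\Gamma=\mathsf A_j$ is a single relation, the intersection number $p_{j,j}^{j}$ makes the number of common $\bar\Gamma$-neighbours constant over pairs in $R_j$, and your inclusion--exclusion count $v-2-2k+\mu_{xy}$ transfers that constancy to $\mu_{xy}$, whence $\Gamma$ is strongly regular. The two routes are complementary formulations of the same fact --- expanding $\mathsf A_j^2=(J-I-A)^2=(v-2-2k)J+I+2A+A^2$ shows they carry identical information --- but the paper's is shorter, reading off all coefficients of $A^2$ at once, whereas yours needs only the regularity of $\Gamma$ for the count and invokes edge-regularity only at the very end; the trade-off is an extra double count on your side against the paper's purely algebraic two lines. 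Your separate exclusion of $|S|=0$ (which the paper folds into the bound ``support at least $2$'') is harmless and, if anything, makes the case analysis more explicit.
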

\begin{proof}
    By Theorem~\ref{thm:rank6sym}, the coherent closure $\mathcal W(\Gamma)$ is symmetric.
    Hence, by Theorem~\ref{thm:relationSRG}, the support of $\Gamma$ is at least $2$.
    Let $\mathcal C(\Gamma) = \{\mathsf A_0 = I, \mathsf A_1, \dots, \mathsf A_5 \}$.
    Suppose (for a contradiction) that the cardinality of the support of $\Gamma$ is $4$.
    Without loss of generality, we can assume that $A = A(\Gamma) = \mathsf A_1 + \mathsf A_2+\mathsf A_3+\mathsf A_4$.
    Since $\Gamma$ is a Neumaier graph, it is $(v,k,\lambda)$-edge-regular for some non-negative integers $v$, $k$, and $\lambda$.
    Hence,
    $A^2 = kI + \lambda A + \mu \mathsf A_5$ for some integer $\mu$.
    Since $\mathsf A_5 = J-I-A$, we find that $\Gamma$ is strongly regular, a contradiction.
    Lastly, $\Gamma$ cannot have support with cardinality $5$ since $\Gamma$ is not a complete graph.
\end{proof}

Proposition~\ref{pro:suppCard} suggests two types of Neumaier graphs of coherent rank 6, distinguished by the cardinality of their support.
We show below that both types indeed exist.
First, we show that the construction of Greaves and Koolen~\cite{GreavesKoolen2} yields infinitely many Neumaier graphs of coherent rank 6 whose support has cardinality $3$.

\subsection{An infinite family with coherent rank 6}
\label{sec:rank6T}

Let $\Gamma = (X,E)$ be a connected graph.
For $x,y \in X$, denote by $d(x,y)$ the distance from $x$ to $y$.
The \textbf{diameter} of $\Gamma$ is the maximum distance over all pairs of vertices $x,y \in X$.
Suppose $\Gamma$ has diameter $D$.
Then $\Gamma$ is called $a$-\textbf{antipodal} if the relation of being at distance $D$ or distance $0$ is an equivalence relation with equivalence classes having size $a$.
Note that we must have $a \geqslant 2$.
The graph $\Gamma$ is called \textbf{distance-regular} if, for any two vertices $x,y \in X$ with $d(x,y)=k$, the number of vertices at distance $i$ from $x$ and distance $j$ from $y$ depends only on $i$, $j$, and $k$.

Recall the construction of Neumaier graphs in \cite{GreavesKoolen2}.
Let $\Delta$ be an $a$-antipodal distance regular graph of diameter $3$.
Then $\Delta$ is edge-regular with parameters $(v,k,\lambda)$ (say).
Suppose that $\lambda+2$ is a multiple of $a$ and set $t = (\lambda+2)/a$.
Since $\Delta$ is distance regular, its coherent closure $\mathcal W (\Delta) = \langle I, \mathsf A_1, \mathsf A_2, \mathsf A_3 \rangle$, where for each $i \in \{1,2,3\}$, the $(x,y)$ entry of $\mathsf A_i$ is equal to $1$ if and only if $d(x,y) = i$.
Define the graph $\mathfrak K(\Delta)$ to be the graph with adjacency matrix 
\[
A(\mathfrak K(\Delta)) = I_t\otimes (\mathsf A_1 +\mathsf A_3) + (J_t-I_t)\otimes (I+\mathsf A_3),
\]
where the symbol $\otimes$ denotes the Kronecker product.

\begin{theorem}[cf. {\cite[Theorem 2.1]{GreavesKoolen2}}]
\label{thm:GKT}
    Let $\Delta$ be an $a$-antipodal distance regular graph of diameter $3$.
    Suppose that $\Delta$ is $(v,k,\lambda)$-edge-regular such that $a$ is a proper\footnote{The case when $a = \lambda +2$ produces strongly regular graphs~\cite{BSRG}.} divisor of $\lambda + 2$.
    Then $\mathfrak K(\Delta)$ is a strictly Neumaier graph with parameters $(v(\lambda+2)/a,k+\lambda+1,\lambda; 1, \lambda+2)$. 
\end{theorem}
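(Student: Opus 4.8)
The plan is to work with the vertex set $\{1,\dots,t\}\times X$, where $X$ is the vertex set of $\Delta$ and $t=(\lambda+2)/a$, and to read adjacency off the two Kronecker summands: vertices $(i,x)$ and $(j,y)$ are adjacent exactly when either $i=j$ and $d(x,y)\in\{1,3\}$, or $i\ne j$ and either $x=y$ or $d(x,y)=3$. Writing $A=A(\mathfrak K(\Delta))=I_t\otimes P+(J_t-I_t)\otimes Q$ with $P=\mathsf A_1+\mathsf A_3$ and $Q=I+\mathsf A_3$, I would verify the three assertions in turn: that $\mathfrak K(\Delta)$ is $(v(\lambda+2)/a,\,k+\lambda+1,\,\lambda)$-edge-regular, that it carries a $1$-regular clique of order $\lambda+2$, and that it is not strongly regular. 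Throughout I would use two structural facts about an $a$-antipodal distance-regular graph $\Delta$ of diameter $3$: its distance-$3$ graph is a disjoint union of cliques $K_a$ (the antipodal classes), and every vertex has exactly one neighbour in each antipodal class other than its own. The second fact, equivalent to $b_2=1$, follows from a short intersection-number computation (first show $c_3=k$, since any neighbour of $y$ at distance $3$ from the antipode $x$ would lie in the common fibre, which is impossible); it is a standard property of antipodal distance-regular graphs of diameter $3$~\cite{BCN}, and it is the linchpin that forces the nexus to equal $1$.

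For edge-regularity I would compute $A^2$ inside the algebra generated by $I_t$, $J_t$ and the Bose--Mesner algebra $\langle I,\mathsf A_1,\mathsf A_2,\mathsf A_3\rangle$ of $\Delta$. Expanding the square and using $I_t(J_t-I_t)=(J_t-I_t)I_t=J_t-I_t$ together with $(J_t-I_t)^2=(t-2)(J_t-I_t)+(t-1)I_t$ gives
\[
A^2=I_t\otimes\bigl(P^2+(t-1)Q^2\bigr)+(J_t-I_t)\otimes\bigl(2PQ+(t-2)Q^2\bigr).
\]
The routine sub-step is to reduce the three products in the Bose--Mesner algebra: the structural facts yield $\mathsf A_3^2=(a-1)I+(a-2)\mathsf A_3$, $\mathsf A_1\mathsf A_3=\mathsf A_2$, and $\mathsf A_1^2=kI+\lambda\mathsf A_1+c_2\mathsf A_2$, whence $Q^2=aQ$, together with $P^2=(k+a-1)I+\lambda\mathsf A_1+(c_2+2)\mathsf A_2+(a-2)\mathsf A_3$ and $PQ=(a-1)I+\mathsf A_1+\mathsf A_2+(a-1)\mathsf A_3$. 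Substituting and collecting coefficients (here the identity $ta=\lambda+2$ does the arithmetic), I expect the diagonal of $A^2$ to equal $k+\lambda+1$ and the coefficient seen by every adjacent pair---whether an $i=j$ pair at distance $1$ or $3$, or an $i\ne j$ pair with $x=y$ or at distance $3$---to equal $\lambda$. This simultaneously gives the degree $k+\lambda+1$ and edge-regularity with parameter $\lambda$, and the order is $tv=v(\lambda+2)/a$.

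For the regular clique, fix an antipodal class $Q_0$ of $\Delta$ and set $C=\{(i,x):i\in\{1,\dots,t\},\,x\in Q_0\}$, a set of $ta=\lambda+2$ vertices. Checking the adjacency cases shows $C$ is a clique: two vertices of $C$ in the same copy are at distance $3$ in $\Delta$, and two in different copies either share their $X$-coordinate or are again at distance $3$. For a vertex $(j,y)$ with $y\notin Q_0$, the only way to be adjacent to some $(i,x)\in C$ is with $i=j$ and $d(x,y)=1$, so the number of neighbours of $(j,y)$ in $C$ equals the number of neighbours of $y$ in $Q_0$, which is $1$ by the structural fact. Hence $C$ is a $1$-regular clique of order $\lambda+2$; since $\mathfrak K(\Delta)$ is non-complete (it has non-adjacent vertices, as seen below), it is a Neumaier graph with the stated parameters, and by \cite[Theorem~1.1]{Neumaier1981} these cliques are exactly its maximal cliques.

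Finally, to see that $\mathfrak K(\Delta)$ is strictly Neumaier I would read the common-neighbour counts of non-adjacent pairs off the same expression for $A^2$: a pair in one copy at $\Delta$-distance $2$ sees $c_2+2$ common neighbours, while any pair in different copies sees exactly $2$. A distance-$2$ pair exists because $\Delta$ has diameter $3$, and a cross-copy pair exists precisely when $t\ge 2$, that is, when $a$ is a \emph{proper} divisor of $\lambda+2$; since $c_2\ge 1$ the two counts differ, so $\mathfrak K(\Delta)$ has no constant $\mu$ and is not strongly regular. This is exactly the point at which the proper-divisor hypothesis is used. The main obstacle is not any single calculation but marshalling the antipodal-diameter-$3$ structure: the clean products above, and with them both the value $\lambda$ for every edge and the nexus $1$, all rest on the facts $c_3=k$ and $b_2=1$, so establishing these carefully is the crux.
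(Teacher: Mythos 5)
Your proof is correct, and it runs on exactly the machinery the paper sets up around this statement (which the paper itself does not reprove but imports from \cite[Theorem 2.1]{GreavesKoolen2}): the Kronecker decomposition of $A(\mathfrak K(\Delta))$ together with the Bose--Mesner relations \eqref{eqn:adrg}. Your expansion of $A^2$ agrees term by term with the computation in the proof of Theorem~\ref{thm:cr6inf} --- your $c_2+2$ is the paper's $(k-\lambda+2a-3)/(a-1)$, and your non-strong-regularity step corresponds to the paper's observation that $(k-\lambda+2a-3)/(a-1)\neq 2$ --- while your clique and nexus argument via the antipodal classes and $b_2=1$ is the standard one implicit in the intersection array $\{k,\,k-\lambda-1,\,1;\,1,\,\tfrac{k-\lambda-1}{a-1},\,k\}$ that the paper quotes.
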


Theorem~\ref{thm:GKT} provides infinitely many Neumaier graphs~\cite{GreavesKoolen2}.
Next, we show that each of these Neumaier graphs has coherent rank 6.

Antipodal distance regular graphs of diameter $3$ have been well-studied, for example, see~\cite{GH}.
Since $\Delta$ is distance regular, together with the property of being $a$-antipodal, one can obtain formulas for the products of $\mathsf A_i \mathsf A_j$ using the \textit{intersection array} of $\Delta$, which is $\{ k, k-\lambda-1, 1; 1, \frac{k-\lambda-1}{a-1}, k \}$.
We refer the reader to \cite[Page 127]{BCN} for the relevant properties of distance regular graphs.
In particular, we have
\begin{align}
\label{eqn:adrg}
    \begin{split}
        \mathsf A_1^2 &= kI + \lambda \mathsf A_1 + \frac{k-\lambda-1}{a-1}\mathsf A_2; \\
        \mathsf A_3^2 &= (a-1)I + (a-2)\mathsf A_3; \\
        \mathsf A_1\mathsf A_3 &= \mathsf A_2; \\
        \mathsf A_2\mathsf A_3 &= (a-1)\mathsf A_1+(a-2)\mathsf A_2.
    \end{split}
\end{align}

Now we show that the Neumaier graphs $\mathfrak K(\Delta)$ have coherent rank 6.

\begin{theorem}
\label{thm:cr6inf}
Let $\Delta$ be an $a$-antipodal distance regular graph of diameter $3$.
    Suppose that $\Delta$ is $(v,k,\lambda)$-edge-regular such that $a$ is a proper divisor of $\lambda + 2$ and suppose that $\mathcal W (\Delta) = \langle I, \mathsf A_1, \mathsf A_2, \mathsf A_3 \rangle$, where $\mathsf A_1 = A(\Delta)$.
    Then
    \[
    \mathcal W (\mathfrak K(\Delta)) =  \left \langle I, I_t\otimes \mathsf A_1,I_t\otimes \mathsf A_2, I_t\otimes \mathsf A_3, (J_t-I_t)\otimes (I+\mathsf A_3), (J_t-I_t)\otimes (\mathsf A_1+\mathsf A_2) \right \rangle.
    \]
\end{theorem}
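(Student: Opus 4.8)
The plan is to prove the two inclusions separately. Write $P = I + \mathsf A_3$ and $Q = \mathsf A_1 + \mathsf A_2$, so that $P + Q = J$, and set $\mathsf B_0 = I$, $\mathsf B_1 = I_t \otimes \mathsf A_1$, $\mathsf B_2 = I_t \otimes \mathsf A_2$, $\mathsf B_3 = I_t \otimes \mathsf A_3$, $\mathsf B_4 = (J_t - I_t)\otimes P$, and $\mathsf B_5 = (J_t - I_t)\otimes Q$. These six symmetric $\{0,1\}$-matrices have pairwise disjoint supports and sum to $J$, and $A(\mathfrak K(\Delta)) = \mathsf B_1 + \mathsf B_3 + \mathsf B_4$. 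Let $\mathcal B = \langle \mathsf B_0, \ldots, \mathsf B_5\rangle$.

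For the inclusion $\mathcal W(\mathfrak K(\Delta)) \subseteq \mathcal B$, I would verify that $\mathcal B$ is a coherent algebra; since it contains $A(\mathfrak K(\Delta))$, minimality then forces $\mathcal W(\mathfrak K(\Delta)) \subseteq \mathcal B$. Axioms (A1) and (A2) are immediate ($I = \mathsf B_0$, $J = \sum_i \mathsf B_i$, all $\mathsf B_i$ symmetric), and closure under $\circ$ is automatic because the $\mathsf B_i$ have disjoint supports. The only real work is closure under ordinary multiplication. A product of two ``diagonal'' generators $\mathsf B_i, \mathsf B_j$ with $i,j \in \{0,1,2,3\}$ equals $I_t \otimes \mathsf A_i\mathsf A_j$ and lies in $\langle \mathsf B_0, \mathsf B_1, \mathsf B_2, \mathsf B_3\rangle$ since $\{I, \mathsf A_1, \mathsf A_2, \mathsf A_3\}$ is the association scheme of $\Delta$. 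For the remaining products I would first record, using~(\ref{eqn:adrg}) together with $P^2 = aP$, $PQ = aQ$, $Q^2 = (v-a)P + (v-2a)Q$ and the facts $\mathsf A_i P, \mathsf A_i Q \in \langle P, Q\rangle$ for each $i$ (for instance $\mathsf A_1 P = Q$ and $\mathsf A_1 Q = kP + (k-1)Q$), that every product of $P$ or $Q$ with an $\mathsf A_i$ stays in $\langle P, Q\rangle$; applying $(M\otimes N)(M'\otimes N') = MM'\otimes NN'$ and $(J_t - I_t)^2 = (t-2)(J_t-I_t)+(t-1)I_t$, each mixed and each off-diagonal product then expands into $\mathcal B$.

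For the reverse inclusion I would show that each $\mathsf B_i$ lies in $\mathcal W(\mathfrak K(\Delta))$, using the Schur--Wielandt principle (Lemma~\ref{lem:wielandtcc}). Starting from $I, J$ and $A = \mathsf B_1 + \mathsf B_3 + \mathsf B_4$, we get $\mathsf B_2 + \mathsf B_5 = J - I - A \in \mathcal W(\mathfrak K(\Delta))$. Computing $A^2$ in the $\mathsf B_i$-basis and forming the entrywise product $A^2 \circ (\mathsf B_2 + \mathsf B_5)$ isolates $(c_2 + 2)\mathsf B_2 + 2\mathsf B_5$, where $c_2 = (k-\lambda-1)/(a-1) \geqslant 1$; as $c_2 + 2 \neq 2$, solving the resulting $2 \times 2$ system yields $\mathsf B_2, \mathsf B_5 \in \mathcal W(\mathfrak K(\Delta))$. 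Next,
\[
\mathsf B_5^2 = (t-1)(v-a)(\mathsf B_0 + \mathsf B_3) + (t-1)(v-2a)(\mathsf B_1 + \mathsf B_2) + (t-2)(v-a)\mathsf B_4 + (t-2)(v-2a)\mathsf B_5,
\]
and applying Lemma~\ref{lem:wielandtcc} to the values $(t-1)(v-a)$ and $(t-1)(v-2a)$ recovers $\mathsf B_0 + \mathsf B_3$ and $\mathsf B_1 + \mathsf B_2$, whence $\mathsf B_3 = (\mathsf B_0 + \mathsf B_3) - I$, $\mathsf B_1 = (\mathsf B_1 + \mathsf B_2) - \mathsf B_2$, and finally $\mathsf B_4 = A - \mathsf B_1 - \mathsf B_3$.

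The main obstacle is precisely this last part. Because $t = (\lambda+2)/a$ forces $at - 2 = \lambda$, the matrix $A^2$ takes the same value $\lambda$ on all three of $\mathsf B_1, \mathsf B_3, \mathsf B_4$, so no power of $A$ (equivalently, no spectral information) can separate these three relations; this is why the coherent rank is genuinely $6$ rather than being visible from the eigenvalues alone, and why one must pass to $\mathsf B_5^2$. The argument is only valid if the relevant entry values of $\mathsf B_5^2$ are distinct from all others, and I would discharge this using $t \geqslant 2$ (as $a$ is a proper divisor of $\lambda + 2$) together with the identity $v = a(k+1)$ for $a$-antipodal distance-regular graphs of diameter $3$, which reduces the required separations (such as $v > 2a$ and $ta \neq v$) to routine inequalities.
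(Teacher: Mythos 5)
Your proposal is correct, and its overall skeleton is the same as the paper's: both directions are handled identically in spirit (verify that the six-dimensional span is a coherent algebra, then pull each basis relation into $\mathcal W(\mathfrak K(\Delta))$ by repeated use of the Schur--Wielandt principle, Lemma~\ref{lem:wielandtcc}), and both start from the same matrix $(J-I-A)\circ A^2$, whose two distinct nonzero values $c_2+2$ and $2$ isolate $I\otimes\mathsf A_2$ --- and, as you note, $(J_t-I_t)\otimes(\mathsf A_1+\mathsf A_2)$ simultaneously. Where you genuinely diverge is the second extraction step. The paper \emph{multiplies}: it forms $A\,(I\otimes\mathsf A_2)$ and takes Hadamard products with $A$ and with $J-I-A$, so the hypothesis that $a$ is a \emph{proper} divisor of $\lambda+2$ enters transparently as the single separation $k-\lambda+a-2\neq k$, yielding $I\otimes\mathsf A_1$, $I\otimes\mathsf A_3$, and $(J_t-I_t)\otimes(\mathsf A_1+\mathsf A_2)$ in one pass. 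You instead \emph{square}: your expansion of $\mathsf B_5^2$ is correct (it follows from $(J_t-I_t)^2=(t-1)I_t+(t-2)(J_t-I_t)$ and $Q^2=(v-a)P+(v-2a)Q$), and the coefficient separations you need --- $v\neq 2a$, $v\neq ta=\lambda+2$, $(t-1)(v-a)\neq(t-2)(v-2a)$ --- do all reduce, as you say, to routine inequalities via $v=a(k+1)$, $k\geqslant\lambda+2$, $a\geqslant 2$, $t\geqslant 2$. Your route buys a small gain in self-containedness: you justify $c_2+2\neq 2$ by the elementary fact $c_2\geqslant 1$ for a distance-regular graph of diameter $3$, whereas the paper invokes Theorem~\ref{thm:GKT} (non-strong-regularity of $\mathfrak K(\Delta)$) for the same inequality; the paper's route, in exchange, is shorter and makes the role of the properness hypothesis more visible. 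One caveat: your parenthetical claim that \emph{no} power of $A$ can separate $\mathsf B_1,\mathsf B_3,\mathsf B_4$ does not follow merely from the observation that $A^2$ takes the value $\lambda$ on all three; it is purely motivational, and fortunately nothing in your argument depends on it.
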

\begin{proof}
    It is routine to verify that
    \[
    \left \langle I, I_t\otimes \mathsf A_1,I_t\otimes \mathsf A_2, I_t\otimes \mathsf A_3, (J_t-I_t)\otimes (I+\mathsf A_3), (J_t-I_t)\otimes (\mathsf A_1+\mathsf A_2) \right \rangle
    \]
    is a coherent algebra.
    It remains to show that the rank of $\mathcal W = \mathcal W (\mathfrak K(\Delta))$ is at least $6$.
    Let $A = A(\mathfrak K(\Delta))$.
    Using \eqref{eqn:adrg}, we find that
    \[
    \begin{split}
        A^2 =& \  (k+\lambda+1)I + \lambda I\otimes (\mathsf A_1+\mathsf A_3) + \frac{k-\lambda+2a-3}{a-1}I\otimes \mathsf A_2 \\
        & + \lambda(J-I)\otimes (I+\mathsf A_3) +  2(J-I)\otimes (\mathsf A_1+\mathsf A_2),
    \end{split}
    \]
    which clearly belongs to $\mathcal W$.
    By Theorem~\ref{thm:GKT}, the graph $\mathfrak K(\Delta)$ is not strongly regular.
    Hence, $(k-\lambda+2a-3)/(a-1) \ne 2$.
    Furthermore, we can apply Lemma~\ref{lem:wielandtcc} to $(J-I-A) \circ A^2 \in \mathcal W$ to deduce that $I \otimes \mathsf A_2 \in \mathcal W$.
     Using \eqref{eqn:adrg}, we find that
     \[
      \begin{split}
        A(I\otimes \mathsf A_2) =& \ (k -\lambda + a -2) I\otimes \mathsf A_1+ \frac{(a-2)(k+a-2)+\lambda}{a-1} I\otimes \mathsf A_2 \\ 
        & + k I\otimes \mathsf A_3 
        +  (a-1) (J-I)\otimes (\mathsf A_1+\mathsf A_2),
    \end{split}
     \]
     which also belongs to $\mathcal W$.
     Since $a \ne \lambda +2$, by applying Lemma~\ref{lem:wielandtcc} to $A \circ (A(I\otimes \mathsf A_2))$, we find that both $I\otimes \mathsf A_1$ and $I\otimes \mathsf A_3$ belong to $\mathcal W$.
     By applying Lemma~\ref{lem:wielandtcc} to $(J-A-I) \circ (A(I\otimes \mathsf A_2))$, we find that $(J-I)\otimes (\mathsf A_1+A_2)$ belongs to $\mathcal W$.
     It is straightforward now to deduce that $(J-I) \otimes (I+\mathsf A_3) \in \mathcal W$.
\end{proof}

Theorem~\ref{thm:cr6inf} shows that the graphs $\mathfrak K(\Delta)$ are an infinite family of Neumaier graphs that have coherent rank 6 whose support has cardinality 3. 
Evans et al.~\cite{EGP19} generalised Theorem~\ref{thm:GKT}, however, their additional Neumaier graphs have coherent rank greater than 6.

There are only two known examples of Neumaier graphs with coherent rank 6 and support of cardinality 2 in the literature.
These two examples come from a construction in \cite{GreavesKoolen1} with $q = 7$ and $q=13$  (see also Table~\ref{tab:list}, below).
In the remainder of the paper, we exhibit a new family of Neumaier graphs that contains conjecturally infinitely many members having coherent rank 6 and support of cardinality 2.

\section{ A new family of Neumaier graphs from cyclotomy}
\label{sec:newfam}
\subsection{Cyclotomic numbers}

Fix an integer $m \geqslant 2$.
Let $q$ be a prime power congruent to $1$ modulo $m$ and let $\alpha$ be a primitive element for the finite field $\operatorname{GF}(q)$.
Given $a,b \in \mathbb Z$, we define the \textbf{cyclotomic number} $c_m(\alpha;a,b)$ of \textbf{order} $m$ as
\[
c_m(\alpha;a,b) := \left | \left \{ \alpha^k+1 \; : \; k \equiv a \pmod m \right \} \cap \left \{ \alpha^k \; : \; k \equiv b \pmod m  \right \} \right |.
\]

Here, we adopt a slightly unorthodox notation for cyclotomic numbers that includes the primitive element $\alpha$.
We do this because the choice of $\alpha$ will play an important role later on (see Section~\ref{sec:6beyond}).

We will require the following theorem, which outlines some elementary identities for sums of cyclotomic numbers.

\begin{theorem}[{\cite[cf.\ Lemma 3]{Storer1967},\cite[cf.\ Theorem 1.17]{Ding15}}]
\label{thm:cycBasic}
    Let $m \geqslant 2$ be an integer and $q$ be a prime power satisfying $q = 1+nm$ for some $n \in \mathbb N$.
    Let $\alpha$ be a primitive element for $\operatorname{GF}(q)$.
    Then the following equations hold.
    \begin{enumerate}
        \item[(i)] $c_m(q,\alpha;a,b) = \begin{cases}
            c_m(q,\alpha;b,a) & \text{ if $q$ or $n$ is even}; \\
            c_m(q,\alpha;b+m/2,a+m/2) & \text{ if $qn$ is odd}.
        \end{cases}$
        \item[(ii)] $\displaystyle \sum_{a=0}^{m-1}c_m(\alpha;a,b) = \begin{cases}
            n-1 & \text{if $m$ divides $b$;} \\ n & \text{otherwise}.
        \end{cases}$
        \item [(iii)] $\displaystyle \sum_{b=0}^{m-1} c_m(\alpha;a,b) = \begin{cases}
            n-1 & \text{if $a \equiv 0 \pmod m$ and $qn$ is even;} \\
            n-1 & \text{if $a \equiv m/2 \pmod m$ and $qn$ is odd;} \\
            n & \text{otherwise}.
        \end{cases}$
    \end{enumerate}
\end{theorem}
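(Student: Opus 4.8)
The plan is to unwind the definition set-theoretically. Write $C_i = \{\alpha^k : k \equiv i \pmod m\}$ for the $i$-th cyclotomic class, so that $C_0,\dots,C_{m-1}$ partition $\operatorname{GF}(q)^* = \operatorname{GF}(q)\setminus\{0\}$ into classes of size $n$, and observe that the definition reads $c_m(\alpha;a,b) = |(C_a + 1)\cap C_b|$, where $C_a + 1 = \{x+1 : x \in C_a\}$. Equivalently, $c_m(\alpha;a,b)$ counts the pairs $(x,y)$ with $x \in C_a$, $y \in C_b$, and $y = x+1$. The one structural fact I will need repeatedly is the cyclotomic class containing $-1$; I would establish this first, since the two cases it produces are exactly the source of the case splits in (i) and (iii).

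For parts (ii) and (iii) I would sum over the free index and exploit that the classes partition $\operatorname{GF}(q)^*$. For (ii), since $\bigsqcup_a (C_a + 1) = \operatorname{GF}(q)^* + 1 = \operatorname{GF}(q)\setminus\{1\}$ (a disjoint union, as translation by $1$ is a bijection), summing gives $\sum_a c_m(\alpha;a,b) = |(\operatorname{GF}(q)\setminus\{1\})\cap C_b|$. As $C_b \subseteq \operatorname{GF}(q)^*$ has $n$ elements and contains $1 = \alpha^0$ precisely when $m \mid b$, this equals $n-1$ if $m \mid b$ and $n$ otherwise. For (iii), since $\bigsqcup_b C_b = \operatorname{GF}(q)^*$, I obtain $\sum_b c_m(\alpha;a,b) = |(C_a + 1)\cap\operatorname{GF}(q)^*| = n - [\,0 \in C_a + 1\,] = n - [\,-1 \in C_a\,]$, so the count drops by $1$ exactly when $-1 \in C_a$.

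It then remains to locate $-1$. When $q$ is even, $-1 = 1 \in C_0$. When $q$ is odd, $q-1 = nm$ is even and $-1 = \alpha^{(q-1)/2} = \alpha^{nm/2}$; reducing the exponent modulo $m$, I find $nm/2 \equiv 0 \pmod m$ if $n$ is even, and $nm/2 \equiv m/2 \pmod m$ if $n$ is odd (which forces $m$ even). Thus $-1 \in C_0$ when $qn$ is even and $-1 \in C_{m/2}$ when $qn$ is odd. Substituting into the formula from the previous paragraph yields precisely the three cases of (iii), while the observation that $1 \in C_b \iff m \mid b$ completes (ii).

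Finally, for the symmetry in (i) I would use the involution $(x,y)\mapsto(-y,-x)$ on the set of pairs in $(\operatorname{GF}(q)^*)^2$ satisfying $y = x+1$. It is well defined because $y = x+1$ implies $(-y) + 1 = -x$, and it is plainly its own inverse. Writing $c$ for the class of $-1$ (so $c = 0$ if $qn$ is even and $c = m/2$ if $qn$ is odd), multiplication by $-1$ sends $C_a$ to $C_{a+c}$; hence this involution carries the pairs counted by $c_m(\alpha;a,b)$ bijectively onto those counted by $c_m(\alpha;b+c,a+c)$, giving $c_m(\alpha;a,b) = c_m(\alpha;b+c,a+c)$, which is exactly (i). I expect the only genuine obstacle to be the parity bookkeeping for the class of $-1$; once that is pinned down, the rest is a routine translation between the set-theoretic and the counting descriptions.
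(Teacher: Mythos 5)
Your proof is correct in all three parts. Note that the paper itself gives no proof of this theorem: it is quoted from the literature (Storer's Lemma~3 and Ding's Theorem~1.17), and your argument --- the partition/translation counting for (ii) and (iii), locating the class of $-1$ via $-1=\alpha^{nm/2}$ with the parity split on $n$ (and $-1=1$ in characteristic $2$), and the negation involution $(x,y)\mapsto(-y,-x)$ for (i) --- is precisely the standard argument found in those sources. The one delicate point, that $-1\in C_0$ exactly when $qn$ is even and $-1\in C_{m/2}$ exactly when $qn$ is odd (the latter forcing $m$ even, so the index $m/2$ makes sense), is handled correctly, and it is indeed the single fact from which all three case splits in the statement flow.
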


\subsection{Schur rings and Cayley graphs}

Let $G$ be a finite multiplicative group with identity $\mathbf 1_G$.
The group ring $\mathbb Z G$ consists of all formal sums $\sum_{g \in G} c_g g$, where each $c_g \in \mathbb Z$ and the (usual) sum and product of two formal sums are defined as
\begin{align*}
\left (\sum_{g \in G} c_g g \right ) + \left (\sum_{g \in G} c^\prime_g g \right ) &:= \sum_{g \in G} \left (c_g +c_g^\prime \right) g; \\
    \left (\sum_{g \in G} c_g g \right ) \left (\sum_{g \in G} c^\prime_g g \right ) &:= \sum_{g \in G}\sum_{h \in G} \left (c_g c_h^\prime \right) gh.
\end{align*}
For a subset $S \subset G$, define ${S}^{(-1)} := \{ s^{-1} \; : \; s \in S\}$ and
\[
\underline{S} := \begin{cases}
    \displaystyle \sum_{s \in S} s, & \text{ if $S$ is nonempty}; \\
    0 , & \text{ otherwise.}
\end{cases}
\]
A subring $\mathcal S$ of $\mathbb ZG$ is called a \textbf{Schur ring} over the group $G$ if
\begin{itemize}
    \item[(S1)] $\mathcal S$ has a $\mathbb Z$-basis $\underline{ \mathsf B}_0,\dots,\underline{\mathsf B}_{r-1}$, where $\{\mathsf B_0,\dots,\mathsf B_{r-1}\}$ is a partition of $G$ and $\mathsf B_0 = \{\mathbf 1_G\}$;
    \item[(S2)] for each $i \in \{0,1,\dots,r-1\}$ there exists $j \in \{0,1,\dots,r-1\}$ such that $\underline{\mathsf B}_j = \underline{\mathsf B}_i^{(-1)}$.
\end{itemize}
Let $\mathcal S$ be a Schur ring over $G$. 
The basis $\underline{ \mathsf B}_0,\dots,\underline{\mathsf B}_{r-1}$ of (S1) is unique, the corresponding subsets ${\mathsf B}_0,\dots,{\mathsf B}_{r-1}$ of $G$ are called the \textbf{basic sets}
 of $\mathcal S$, and the partition $\{\mathsf B_0,\dots,\mathsf B_{r-1}\}$ is called a \textbf{Schur partition} of $G$.
 Accordingly, we write $\mathcal S = \langle \underline{\mathsf B}_0,\dots,\underline{\mathsf B}_{r-1} \rangle$.
 The \textbf{rank} of $\mathcal S$ is defined as $r$ and denoted by $\operatorname{rk}(\mathcal S)$.
 Given a subset $S \subset G$, we define the \textbf{Cayley digraph} $\operatorname{Cay}(G,S)$ as the digraph with vertex set $G$ and arc set $\{(g,sg) \; : \; g \in G,\, s \in S \}$.
 When $S = S^{(-1)}$ and $\mathbf 1_G \not \in S$ we call $\operatorname{Cay}(G,S)$ an (undirected, simple) \textbf{Cayley graph}.

 \begin{lemma}[{\cite[Lemma 5.1]{deza}}]
 \label{lem:schurCC}
     Let $G$ be a finite group, $S \subseteq G$ with $\mathbf 1_G \not\in S$.
     Suppose that $\mathcal S$ is the minimal Schur ring over $G$ for which $\underline S \in \mathcal S$.
     Then $\operatorname{rk}{\mathcal W}(\operatorname{Cay}(G,S)) = \operatorname{rk}(\mathcal S)$.
 \end{lemma}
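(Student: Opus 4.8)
The plan is to set up a dictionary between Schur rings over $G$ and the coherent subalgebras of the matrix algebra generated by the regular representation of $G$, and then to observe that $\mathcal W(\operatorname{Cay}(G,S))$ necessarily lands inside that algebra. Concretely, for each $g \in G$ let $M_g \in \operatorname{Mat}_G(\{0,1\})$ be the permutation matrix with $[M_g]_{xy} = 1$ exactly when $y = gx$, and let $V = \langle M_g : g \in G\rangle_{\mathbb C}$ be their linear span. Extending $g \mapsto M_g$ linearly gives a bijective linear map $\Phi\colon \mathbb C G \to V$ sending $\underline S$ to the adjacency matrix $A = A(\operatorname{Cay}(G,S))$; since $M_g M_h = M_{hg}$, since $M_g^\transpose = M_{g^{-1}}$, and since the $M_g$ have pairwise disjoint supports, $V$ is closed under matrix product, transpose, and Hadamard product, contains $I = M_{\mathbf 1_G}$ and $J = \sum_g M_g$, and has $\{M_g\}_{g\in G}$ as its $\{0,1\}$-basis. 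Thus $V$ is itself a coherent algebra (of rank $|G|$) with $A \in V$, and because $\mathcal W(\operatorname{Cay}(G,S))$ is by definition the minimal coherent algebra containing $A$, we immediately get $\mathcal W(\operatorname{Cay}(G,S)) \subseteq V$.

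The heart of the argument is to show that the coherent subalgebras of $V$ are exactly the images under $\Phi$ of the Schur rings over $G$, and that this correspondence preserves both inclusion and rank. Given a Schur ring $\mathcal S = \langle \underline{\mathsf B}_0, \dots, \underline{\mathsf B}_{r-1}\rangle$, I would set $\mathsf A_i = \Phi(\underline{\mathsf B}_i) = \sum_{g \in \mathsf B_i} M_g$ and verify the coherent-algebra axioms: $\sum_i \mathsf A_i = J$ from (S1), $\mathsf A_0 = M_{\mathbf 1_G} = I$ from $\mathsf B_0 = \{\mathbf 1_G\}$, transpose-closure from (S2) via $M_g^\transpose = M_{g^{-1}}$, Hadamard-closure from the disjointness of supports, and product-closure from $\mathcal S$ being a subring (the structure constants of $\mathcal S$ become the intersection numbers). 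Conversely, any coherent subalgebra $\mathcal A \subseteq V$ has a $\{0,1\}$-basis whose members, lying in $V$, must each be of the form $\sum_{g \in B_i} M_g$ for some subset $B_i \subseteq G$; the partition $\{B_i\}$ is then a Schur partition, since the block of $\mathbf 1_G$ is forced to be $\{\mathbf 1_G\}$ (as $I$ decomposes over the basis of $\mathcal A$ and $M_{\mathbf 1_G} = I$ is the unique diagonal matrix among the $M_g$) and (S2) follows from transpose-closure of $\mathcal A$. Since the number of basic sets equals the number of $\{0,1\}$-basis matrices, $\operatorname{rk}(\mathcal S) = \operatorname{rk}(\Phi(\mathcal S))$ throughout.

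Finally I would combine the two ingredients. Because $\mathcal W = \mathcal W(\operatorname{Cay}(G,S))$ sits inside $V$ and is coherent, it is a coherent subalgebra of $V$ containing $A$; conversely the minimal coherent subalgebra of $V$ containing $A$ is a coherent algebra containing $A$, so by minimality of $\mathcal W$ the two coincide. Transporting through $\Phi$, the coherent subalgebras of $V$ containing $A$ correspond precisely to the Schur rings containing $\underline S$, with minimal matched to minimal; hence $\mathcal W = \Phi(\mathcal S)$ for the minimal Schur ring $\mathcal S$ with $\underline S \in \mathcal S$, and therefore $\operatorname{rk}\mathcal W(\operatorname{Cay}(G,S)) = \operatorname{rk}(\mathcal S)$. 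The main obstacle I anticipate is not any single computation but making the bijection between Schur rings and coherent subalgebras of $V$ fully rigorous — in particular, verifying that an arbitrary coherent subalgebra of $V$ really is $\Phi$ of a genuine Schur ring (deducing (S1) and (S2) from the coherent-algebra axioms) and reconciling the integral coefficients of a Schur ring with the complex coefficients of a coherent algebra, so that \emph{minimal} carries the same meaning on both sides.
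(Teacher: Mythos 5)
Your proposal cannot be checked against an internal argument, because the paper does not prove Lemma~\ref{lem:schurCC} at all: it is imported verbatim from the cited reference on Deza graphs. Judged on its own, your proof is correct, and it is the standard argument establishing the dictionary between Schur rings over $G$ and coherent subalgebras of the span $V$ of the regular-representation matrices $M_g$. All the essential points are present: $V$ is itself a coherent algebra containing $A$, so $\mathcal W(\operatorname{Cay}(G,S)) \subseteq V$; a $\{0,1\}$-matrix lying in $V$ is forced to equal $\sum_{g \in B} M_g$ for some $B \subseteq G$, because the $M_g$ have disjoint supports summing to $J$; the identity axiom pins the block of $\mathbf 1_G$ down to $\{\mathbf 1_G\}$, since $M_{\mathbf 1_G}$ is the only $M_g$ whose support meets the diagonal; and transpose-closure yields (S2). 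The two loose ends you flag are easily tied, and you should say how. For the $\mathbb Z$-versus-$\mathbb C$ issue: the structure constants $p_{i,j}^k$ of a coherent configuration are nonnegative \emph{integers} (they count walks), so the $\mathbb Z$-span of the sums $\underline{\mathsf B_i}$ recovered from a coherent subalgebra of $V$ is genuinely closed under multiplication in $\mathbb Z G$, while passing from a Schur ring to the $\mathbb C$-span of its basic sums loses nothing since rank is read off the uniquely determined $\{0,1\}$-basis on either side. For rigour of the order- and minimality-matching step: it rests on uniqueness of the $\{0,1\}$-basis (equivalently the Schur--Wielandt principle, Lemma~\ref{lem:wielandtcc}), which guarantees that when $\mathcal A_1 \subseteq \mathcal A_2$ each basis matrix of $\mathcal A_1$ is a sum of basis matrices of $\mathcal A_2$, and that $A \in \mathcal A$ implies $S$ is a union of basic sets. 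One further remark worth making explicit: with your convention $M_g M_h = M_{hg}$, the map $\Phi$ is an \emph{anti}-isomorphism rather than an isomorphism; this is harmless (it only transposes the indices of the structure constants and preserves every closure property), but left unmentioned it looks like an oversight rather than a choice.
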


 More concretely, suppose that $S \subseteq G$ with $\mathbf 1_G \not\in S$ and $\mathcal S$ is the minimal Schur ring over $G$ that contains $S$.
 Let $\mathsf B_0,\dots,\mathsf B_{r-1}$ be the basic sets of $\mathcal S$.
 Then we have 
 \[
 \mathcal W(\operatorname{Cay}(G,S)) = \langle A(\operatorname{Cay}(G,\mathsf B_0)), \dots, A(\operatorname{Cay}(G,\mathsf B_{r-1})) \rangle.
 \]

    For the proof of Corollary~\ref{cor:rank7} (below), we will require a special case of the Schur-Wielandt principle (see Lemma~\ref{lem:wielandtcc}), which we record as the following lemma.

 \begin{lemma}[{\cite[Proposition 22.1]{weilandt64}, \cite[Proposition 3.2]{muzychuk1999isomorphism}}]
 \label{lem:Wielandt}
    Let $G$ be a finite group, $\mathcal S$ be a Schur ring over $G$, and $\displaystyle \sum_{g \in G} c_g g \in \mathcal S$.
    Then, for any $c \in \mathbb Z$, we have $\underline{\{g \in G \;:\; c_g = c\}} \in \mathcal S$.
\end{lemma}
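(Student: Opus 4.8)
The plan is to exploit the defining feature of a Schur ring recorded in axiom~(S1): its distinguished $\mathbb Z$-basis $\underline{\mathsf B}_0,\dots,\underline{\mathsf B}_{r-1}$ comes from a partition $\{\mathsf B_0,\dots,\mathsf B_{r-1}\}$ of $G$ into basic sets. The single observation I would establish first is that the coefficient function of any element of $\mathcal S$ is constant on each basic set. Indeed, given $x = \sum_{g \in G} c_g g \in \mathcal S$, axiom~(S1) lets me write $x = \sum_{i=0}^{r-1} \lambda_i \underline{\mathsf B}_i$ for unique integers $\lambda_i$. Since $\underline{\mathsf B}_i = \sum_{g \in \mathsf B_i} g$ and the sets $\mathsf B_i$ are pairwise disjoint with union $G$, comparing the coefficient of each $g$ on the two sides shows that $c_g = \lambda_i$ whenever $g \in \mathsf B_i$.

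With this in hand the conclusion is immediate. Fix $c \in \mathbb Z$. Because $c_g$ depends only on the basic set containing $g$, the level set $\{g \in G : c_g = c\}$ is exactly the union $\bigcup_{i \,:\, \lambda_i = c} \mathsf B_i$ of those basic sets on which the common coefficient equals $c$. Summing the corresponding basis elements gives $\underline{\{g \in G : c_g = c\}} = \sum_{i \,:\, \lambda_i = c} \underline{\mathsf B}_i$, a $\mathbb Z$-linear combination of $\underline{\mathsf B}_0,\dots,\underline{\mathsf B}_{r-1}$, and hence an element of $\mathcal S$. (If no $\lambda_i$ equals $c$ the set is empty and the sum is $0 \in \mathcal S$, consistent with the convention $\underline{\emptyset} = 0$.)

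There is no serious obstacle here; the entire content is the constancy-on-basic-sets observation, and once axiom~(S1) is unpacked everything follows from the disjointness of the partition. The only point requiring care is the coefficient comparison itself: it is the $\{0,1\}$-nature of the vectors $\underline{\mathsf B}_i$ together with their pairwise disjoint supports that makes the values $\lambda_i$ \emph{equal} to the coefficients $c_g$, rather than merely determined by them, so that the level set is literally a union of basic sets.

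For completeness I would note that this is the group-ring shadow of the Schur--Wielandt principle for coherent algebras in Lemma~\ref{lem:wielandtcc}: under the correspondence $\underline{\mathsf B}_i \mapsto A(\operatorname{Cay}(G,\mathsf B_i))$ a basic set plays the role of a relation matrix, and the level-set statement is precisely Lemma~\ref{lem:wielandtcc} transported to the Cayley realisation. An alternative, closer to Wielandt's original argument, would first check that $\mathcal S$ is closed under the coefficientwise product via $\underline{\mathsf B}_i \circ \underline{\mathsf B}_j = \delta_{ij}\,\underline{\mathsf B}_i$ and then isolate the level set by a polynomial in $x$ under this product; but the direct partition argument above is shorter and avoids any interpolation.
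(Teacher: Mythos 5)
Your proof is correct and complete. The paper itself offers no proof of this lemma---it is quoted directly from the literature (Wielandt, and Muzychuk)---so there is no internal argument to compare against; your observation that axiom~(S1) forces the coefficient function of any element of $\mathcal S$ to be constant on basic sets, so that every level set is a disjoint union of basic sets and its characteristic sum is a $\mathbb Z$-linear combination of the distinguished basis, is the standard proof and needs nothing more. You also correctly cover the degenerate case via the convention $\underline{\emptyset} = 0$, and your closing remark rightly notes that the Hadamard-product/interpolation route is an unnecessary detour once the partition structure of the basis is in hand.
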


 \subsection{ The general construction }

 Fix an integer $m \geqslant 2$.
Let $q_1$ and $q_2$ be prime powers congruent to $1$ modulo $m$.
Let $q_1$ and $q_2$ be prime powers satisfying $q_1 = 1+n_1m$ and $q_2 = 1+n_2m$ for some $n_1, n_2 \in \mathbb N$.
Let $\alpha_1$ be a primitive element for $\operatorname{GF}(q_1)$ and $\alpha_2$ be a primitive element for $\operatorname{GF}(q_2)$.
Define $\mathsf C_1(\alpha_1) := \{ (\alpha_1^k,0) \; : \; k \in \{0,\dots,q_1-2\} \}$, $\mathsf C_2(\alpha_2) := \{ (0,\alpha_2^k) \; : \; k \in \{0,\dots,q_2-2\} \}$ and, for each $i \in \{0,\dots,m-1\}$, define the set
\begin{align*}
    \mathsf D_{i}(\alpha_1,\alpha_2) := \left \{ (\alpha_1^{i_1},\alpha_2^{i_2}) \; : \; i_1-i_2 \equiv i \pmod m  \right \}.
\end{align*}
\begin{definition}
    We define $\Gamma_m(\alpha_1, \alpha_2) := \operatorname{Cay}\left (\operatorname{GF}(q_1) \times \operatorname{GF}(q_2), \mathsf C_1(\alpha_1) \cup \mathsf D_{0}(\alpha_1,\alpha_2 ) \right )$.
\end{definition}

It is straightforward to verify that $\underline{C_1(\alpha_1)} = -\underline{C_1(\alpha_1)}$ and $\underline{\mathsf D_{0}(\alpha_1,\alpha_2)} = -\underline{\mathsf D_{0}(\alpha_1,\alpha_2)}$ if and only if $q_1n_1 \equiv q_2n_2 \pmod 2$. 
Hence, we have the following lemma.

\begin{lemma}
\label{lem:undirect}
The graph $\Gamma_m(\alpha_1, \alpha_2)$ is undirected if and only if $q_1n_1 \equiv q_2n_2 \pmod 2$. 
\end{lemma}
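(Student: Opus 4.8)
The plan is to establish the claimed "if and only if" by proving the equivalent statement that $\Gamma_m(\alpha_1,\alpha_2)$ is undirected precisely when its connection set $\mathsf{C}_1(\alpha_1) \cup \mathsf{D}_0(\alpha_1,\alpha_2)$ is inverse-closed, i.e.\ equals its own negative in the additive group $\operatorname{GF}(q_1) \times \operatorname{GF}(q_2)$. Since the group is written additively, the involution $S \mapsto S^{(-1)}$ is simply $S \mapsto -S$, and a Cayley digraph $\operatorname{Cay}(G,S)$ is undirected exactly when $-S = S$. The text already asserts the key componentwise fact, namely that $\underline{\mathsf{C}_1(\alpha_1)} = -\underline{\mathsf{C}_1(\alpha_1)}$ and $\underline{\mathsf{D}_0(\alpha_1,\alpha_2)} = -\underline{\mathsf{D}_0(\alpha_1,\alpha_2)}$ hold together if and only if $q_1 n_1 \equiv q_2 n_2 \pmod 2$, so the proof of the lemma is essentially a short remark reducing the graph-theoretic statement to that algebraic condition.

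**First I would** unwind the definition of undirectedness. The set $\mathsf{C}_1(\alpha_1)$ consists of the points $(\alpha_1^k,0)$, and $-\mathsf{C}_1(\alpha_1) = \mathsf{C}_1(\alpha_1)$ iff $-1$ is a power of $\alpha_1$ that this set is closed under, which is automatic since $\mathsf{C}_1(\alpha_1)$ is the full set of nonzero first-coordinate points. The genuinely restrictive condition comes from $\mathsf{D}_0(\alpha_1,\alpha_2)$: negating $(\alpha_1^{i_1},\alpha_2^{i_2})$ gives $(-\alpha_1^{i_1}, -\alpha_2^{i_2}) = (\alpha_1^{i_1+s_1}, \alpha_2^{i_2+s_2})$ where $\alpha_j^{s_j} = -1$, and $s_j = (q_j-1)/2 = n_j m/2$ when $q_j$ is odd. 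The residue of this shift modulo $m$ is what determines whether the congruence $i_1 - i_2 \equiv 0 \pmod m$ is preserved under negation. The parity condition $q_1 n_1 \equiv q_2 n_2 \pmod 2$ is exactly the condition under which the two coordinate shifts $s_1, s_2$ contribute equal residues mod $m$, so that $i_1 - i_2$ is unchanged mod $m$ and $\mathsf{D}_0$ maps to itself; I would verify this by a direct case analysis on the parities of $q_j$ (in characteristic $2$ we have $-1 = 1$ and the shift is trivial).

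**The main obstacle**, such as it is, lies in carefully handling the interaction between the two fields, since the fields may have different characteristics: one of $q_1, q_2$ could be even (characteristic $2$, where $-1 = 1$) while the other is odd. I would therefore split into the cases according to whether each $q_j$ is even or odd, compute the residue modulo $m$ of the negation-shift in each coordinate, and check in each case that preservation of the defining congruence $i_1 - i_2 \equiv 0 \pmod m$ is equivalent to $q_1 n_1 \equiv q_2 n_2 \pmod 2$. Once this bookkeeping is done, the conclusion is immediate.

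**In summary**, the proof is:
\begin{proof}
The graph $\Gamma_m(\alpha_1,\alpha_2)$ is undirected if and only if its connection set $\mathsf C_1(\alpha_1) \cup \mathsf D_0(\alpha_1,\alpha_2)$ is closed under taking additive inverses. Since $\mathsf C_1(\alpha_1)$ and $\mathsf D_0(\alpha_1,\alpha_2)$ lie in disjoint coordinate patterns (the former having zero second coordinate and the latter having both coordinates nonzero), the union is inverse-closed if and only if each of the two sets is individually inverse-closed. As noted above, $\underline{\mathsf C_1(\alpha_1)} = -\underline{\mathsf C_1(\alpha_1)}$ and $\underline{\mathsf D_0(\alpha_1,\alpha_2)} = -\underline{\mathsf D_0(\alpha_1,\alpha_2)}$ hold simultaneously precisely when $q_1 n_1 \equiv q_2 n_2 \pmod 2$, which yields the claim.
\end{proof}
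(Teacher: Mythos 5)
Your proposal is correct and follows essentially the same route as the paper, which simply asserts (in the sentence preceding the lemma) that $\underline{\mathsf C_1(\alpha_1)} = -\underline{\mathsf C_1(\alpha_1)}$ and $\underline{\mathsf D_0(\alpha_1,\alpha_2)} = -\underline{\mathsf D_0(\alpha_1,\alpha_2)}$ hold if and only if $q_1n_1 \equiv q_2n_2 \pmod 2$, and deduces the lemma from the fact that a Cayley graph is undirected exactly when its connection set is inverse-closed. Your write-up just supplies the details the paper calls ``straightforward to verify'': the disjoint-support argument splitting the condition over $\mathsf C_1$ and $\mathsf D_0$, and the computation of the negation shift $(q_j-1)/2 = mn_j/2$ in each coordinate.
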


Observe that the graph $\Gamma_m(\alpha_1, \alpha_2)$ is a regular graph with $q_1 q_2$ vertices each of degree $(q_1-1)(n_2+1)$.
Define the following sum of products of cyclotomic numbers
\[
\mathcal X^{(m)}_{i,j,k}(\alpha_1,\alpha_2) := \sum_{a = 0}^{m-1}\sum_{b =0}^{m-1} c_m(\alpha_1;a,b)c_m(\alpha_2;a+i-j,b+i-k).
\]
We will use the following obvious identity.
\begin{proposition}
\label{pro:Xeq}
For all $a, b \in \{0,\dots,m-1\}$.
We have $\mathcal X^{(m)}_{0,0,b}(\alpha_1,\alpha_2) = \mathcal X^{(m)}_{a,a,a+b}(\alpha_1,\alpha_2)$.
\end{proposition}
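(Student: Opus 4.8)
The plan is to establish the identity directly from the definition of $\mathcal X^{(m)}_{i,j,k}$, since it reduces to the observation that this quantity depends on its three subscripts only through the two differences $i-j$ and $i-k$. Indeed, in the defining double sum the first factor $c_m(\alpha_1;a,b)$ does not involve $i$, $j$, or $k$, while the second factor $c_m(\alpha_2;a+i-j,b+i-k)$ involves them solely through the shifts $i-j$ and $i-k$ applied to its two arguments. Because a cyclotomic number is determined by the residues of its arguments modulo $m$, these shifts are well-defined modulo $m$, and hence $\mathcal X^{(m)}_{i,j,k}$ is a function of the pair $(i-j \bmod m,\ i-k \bmod m)$ alone.

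To carry this out, I would first relabel the summation variables in the definition---say to $s$ and $t$---to avoid the clash with the free parameters $a$ and $b$ appearing in the statement. Substituting $(i,j,k)=(0,0,b)$ yields $i-j=0$ and $i-k=-b$, so that
\[
\mathcal X^{(m)}_{0,0,b}(\alpha_1,\alpha_2)=\sum_{s=0}^{m-1}\sum_{t=0}^{m-1}c_m(\alpha_1;s,t)\,c_m(\alpha_2;s,t-b).
\]
Substituting $(i,j,k)=(a,a,a+b)$ yields $i-j=a-a=0$ and $i-k=a-(a+b)=-b$, so that
\[
\mathcal X^{(m)}_{a,a,a+b}(\alpha_1,\alpha_2)=\sum_{s=0}^{m-1}\sum_{t=0}^{m-1}c_m(\alpha_1;s,t)\,c_m(\alpha_2;s,t-b).
\]
The two double sums are term-by-term identical, and the proposition follows.

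There is no genuine obstacle here---this is precisely the \emph{obvious identity} referred to in the sentence preceding the statement. The only point that warrants any care is the collision between the dummy indices $a,b$ used in the definition and the free parameters $a,b$ appearing in the claim, which is why I would relabel the summation variables before comparing the two expressions.
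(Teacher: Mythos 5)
Your proof is correct and is essentially the argument the paper has in mind: the paper states this proposition without proof, calling it an ``obvious identity,'' and your observation that $\mathcal X^{(m)}_{i,j,k}(\alpha_1,\alpha_2)$ depends on its subscripts only through $i-j$ and $i-k$ modulo $m$ (both equal to $0$ and $-b$ in the two cases) is exactly the intended justification. Your care in relabelling the dummy summation indices to avoid the clash with the free parameters $a,b$ is appropriate and resolves the only notational subtlety.
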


Before introducing our main tool (the following lemma), we first define a pair of auxiliary functions.
\begin{align*}
    \underline{F_{i,j}^{(m)}(\alpha_1,\alpha_2)} &:= n_1n_2\left (\underline{\mathsf C_1(\alpha_1)}+\underline{\mathsf C_2(\alpha_2)}\right ) + \sum_{k=0}^{m-1} \mathcal X^{(m)}_{i,j,k}(\alpha_1,\alpha_2)\underline{\mathsf D_k(\alpha_1,\alpha_2)}; \\
    \underline{G_{i,j}^{(m)}(\alpha_1,\alpha_2)} &:= \underline{F_{i,j}^{(m)}(\alpha_1,\alpha_2)} -n_2\underline{\mathsf C_1(\alpha_1)} - n_1 \underline{\mathsf C_2(\alpha_2)}.
\end{align*}

\begin{lemma}
\label{lem:Spart-rankm+3}
Let $m \geqslant 2$ and $n_1,n_2 \geqslant 1$ be integers. Suppose that $q_1 = 1+mn_1$ and $q_2 = 1+mn_2$ are prime powers.
Let $\alpha_1$ and $\alpha_2$ be primitive elements of $\operatorname{GF}(q_1)$ and $\operatorname{GF}(q_2)$, respectively.
Then 
    \[
    \left \{ \{(0,0)\}, \mathsf C_1(\alpha_1), \mathsf C_2(\alpha_2) \right \} \cup \left \{ \mathsf D_{i}(\alpha_1,\alpha_2) \; : \; i \in \{0,\dots,m-1\}  \right \}
    \]
    is a Schur partition of $\operatorname{GF}(q_1) \times \operatorname{GF}(q_2)$.
    In particular, for $i \in \{1,2\}$ and $j \in \{0,\dots,m-1\}$, 
        \begin{align}
        \label{eqn:641}
        \underline{\mathsf C_i(\alpha_i)}^2 ={}& (q_i-1)\underline{\{(0,0)\}} + (q_i-2)\underline{\mathsf C_i(\alpha_i)}; \\
        \label{eqn:642}
        \underline{\mathsf C_1(\alpha_1)} \, \underline{\mathsf C_2(\alpha_2)} ={}& \sum_{k=0}^{m-1}\underline{\mathsf D_{k}(\alpha_1,\alpha_2)}; \\
        \label{eqn:CD}
        \underline{\mathsf C_i(\alpha_i)}\,\underline{\mathsf D_{j}(\alpha_1,\alpha_2)} ={}& (n_i-1)\underline{\mathsf D_{j}(\alpha_1,\alpha_2)}+n_i\left( \sum_{k \ne i}\underline{\mathsf C_k(\alpha_k)}+ \sum_{k \ne j}\underline{\mathsf D_{k}(\alpha_1,\alpha_2)}\right);
        \end{align}
        and, for each $i,j \in \{0,\dots,m-1\}$, we have
\begin{itemize}
    \item if $q_1n_1$ and $q_2n_2$ have the same parity: 
        \begin{equation}
            \label{eqn:DD1}
            \underline{\mathsf D_{i}(\alpha_1,\alpha_2)}\,\underline{\mathsf D_{j}(\alpha_1,\alpha_2)} = \begin{cases}
                mn_1n_2\underline{\{(0,0)\}} + \underline{G_{i,j}^{(m)}(\alpha_1,\alpha_2)} & \text{ if $i=j$,} \\
                \underline{F_{i,j}^{(m)}(\alpha_1,\alpha_2)} & \text{ otherwise;} 
            \end{cases}
        \end{equation}
        \item if $q_1n_1$ and $q_2n_2$ have different parities: 
        \begin{equation}
            \label{eqn:DD2}
            \underline{\mathsf D_{i}(\alpha_1,\alpha_2)}\,\underline{\mathsf D_{j}(\alpha_1,\alpha_2)} = \begin{cases}
                 \underline{F_{i,j}^{(m)}(\alpha_1,\alpha_2)} + mn_1n_2\underline{\{(0,0)\}} & \text{ if $i=j$,} \\
                 \underline{G_{i,j}^{(m)}(\alpha_1,\alpha_2)} & \text{ if $m \; | \;i - j - m/2$,} \\
                \underline{F_{i,j}^{(m)}(\alpha_1,\alpha_2)} & \text{otherwise.} 
            \end{cases}
        \end{equation}
\end{itemize}
\end{lemma}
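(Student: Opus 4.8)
The plan is to establish this lemma in two conceptually separate parts: first verifying that the proposed partition is a Schur partition, and then deriving each of the structure-constant equations \eqref{eqn:641}--\eqref{eqn:DD2} by direct computation in the group ring $\mathbb Z(\operatorname{GF}(q_1)\times\operatorname{GF}(q_2))$. To verify the Schur partition property, I would check axioms (S1) and (S2). For (S1), the sets $\{(0,0)\}$, $\mathsf C_1(\alpha_1)$, $\mathsf C_2(\alpha_2)$, and $\mathsf D_0,\dots,\mathsf D_{m-1}$ partition the group: every nonzero element has either a zero first coordinate (contributing to $\mathsf C_2$), a zero second coordinate ($\mathsf C_1$), or two nonzero coordinates $(\alpha_1^{i_1},\alpha_2^{i_2})$, which lands in exactly one $\mathsf D_i$ according to the residue of $i_1-i_2 \bmod m$. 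The inverse-closure axiom (S2) follows from Lemma~\ref{lem:undirect} and Theorem~\ref{thm:cycBasic}(i): negating coordinates shifts the cyclotomic indices by the index of $-1$, which permutes the basic sets (fixing $\mathsf C_1$, $\mathsf C_2$ setwise and permuting the $\mathsf D_i$). The cleanest route, however, is to prove the multiplication equations first, since closure of the $\mathbb Z$-span under multiplication, together with the partition and inverse-closure properties, exhibits the span as a Schur ring directly.

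\emph{The genuinely routine equations come first.} Equation~\eqref{eqn:641} is immediate: $\underline{\mathsf C_i(\alpha_i)}$ is the group-ring element of the multiplicative group of a field embedded in one coordinate, so its square counts, for each target element, the number of ways to write it as a product (sum, in additive notation) of two nonzero field elements; this is the standard computation giving $q_i-1$ copies of the identity and $q_i-2$ copies of each other element of $\mathsf C_i$. Equation~\eqref{eqn:642} reflects that summing $(\alpha_1^{i_1},0)+(0,\alpha_2^{i_2})=(\alpha_1^{i_1},\alpha_2^{i_2})$ ranges over all pairs of nonzero elements, hitting each element of $\bigcup_k \mathsf D_k$ exactly once. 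Equation~\eqref{eqn:CD} is a counting argument: multiplying $\underline{\mathsf C_i}$ by $\underline{\mathsf D_j}$ and tracking which basic set each product lands in, using that adding a nonzero first-coordinate element to a $\mathsf D_j$-element can either preserve the second coordinate (landing in some $\mathsf C$ or $\mathsf D$) or not, with multiplicities governed by $n_i$.

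\emph{The heart of the proof} is equations \eqref{eqn:DD1} and \eqref{eqn:DD2}, the products $\underline{\mathsf D_i}\,\underline{\mathsf D_j}$. Here I would expand $\underline{\mathsf D_i}\,\underline{\mathsf D_j}=\sum (\alpha_1^{a_1}+\alpha_1^{b_1},\,\alpha_2^{a_2}+\alpha_2^{b_2})$ over all admissible index pairs and classify each summand by which basic set it falls into. The key observation is that a sum $\alpha_1^{a_1}+\alpha_1^{b_1}$ of two nonzero field elements is nonzero precisely when $b_1\not\equiv a_1 + (\text{index of }-1)$, and when nonzero its cyclotomic class is controlled by a cyclotomic number $c_m(\alpha_1;\cdot,\cdot)$; the same holds in the second coordinate. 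Tracking the joint class of the pair as a product over the two coordinates is exactly what produces the sum-of-products $\mathcal X^{(m)}_{i,j,k}$, which is why the definitions of $\underline{F_{i,j}^{(m)}}$ and $\underline{G_{i,j}^{(m)}}$ take their stated form. The degenerate cases---where one or both coordinate-sums vanish---produce the $\underline{\{(0,0)\}}$, $\underline{\mathsf C_1}$, and $\underline{\mathsf C_2}$ contributions, and carefully accounting for these is what distinguishes $\underline{F}$ from $\underline{G}$ (the difference being the $n_2\underline{\mathsf C_1}+n_1\underline{\mathsf C_2}$ correction term). The main obstacle, and the source of the case split between \eqref{eqn:DD1} and \eqref{eqn:DD2}, is the parity bookkeeping: whether the coordinate-sum can vanish, and for which index-offset it does, depends on the index of $-1$ in each $\operatorname{GF}(q_i)^\times$, which in turn is $0$ or $m/2$ according to the parity of $q_in_i$ via Theorem~\ref{thm:cycBasic}(i). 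I would therefore organise the $\mathsf D_i\,\mathsf D_j$ computation by first determining, in each parity regime, exactly which diagonal or shifted-diagonal index pairs $(i,j)$ force a degenerate coordinate-sum, and then applying Theorem~\ref{thm:cycBasic}(ii)--(iii) to collapse the cyclotomic row- and column-sums into the constants $n_1$, $n_2$, $n_1-1$, $n_2-1$ appearing implicitly in $\underline{F_{i,j}^{(m)}}$.
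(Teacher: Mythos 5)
Your proposal follows essentially the same route as the paper's proof: a direct multiplicity count in the group ring, in which the non-degenerate products of two $\mathsf D$-classes are counted by products of cyclotomic numbers (yielding the coefficients $\mathcal X^{(m)}_{i,j,k}$), the degenerate coordinate-sums account for the $\underline{\{(0,0)\}}$, $\underline{\mathsf C_1(\alpha_1)}$, $\underline{\mathsf C_2(\alpha_2)}$ contributions, and Theorem~\ref{thm:cycBasic}(ii)--(iii) together with the parity of $q_in_i$ (equivalently, the index of $-1$ modulo $m$) produce the case split between \eqref{eqn:DD1} and \eqref{eqn:DD2}. The paper likewise treats \eqref{eqn:641} and \eqref{eqn:642} as routine and concentrates on exactly these counting arguments, so your plan is correct and matches its approach.
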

\begin{proof}
Equations \eqref{eqn:641} and \eqref{eqn:642} are straightforward to verify.
We leave this to the reader.

Let $i \in \{1,2\}$ and $j \in \{0,\dots,m-1\}$.
First, we consider the multiplicity of $\mathsf C_2(\alpha_2)$ in $\underline{\mathsf C_1(\alpha_1)}\, \underline{\mathsf D_j(\alpha_1,\alpha_2)}$.
The multiplicity of $(0,\alpha_2^{k_2})$ in $\underline{\mathsf C_1(\alpha_1)}\, \underline{\mathsf D_j(\alpha_1,\alpha_2)}$ is the number of elements $((\alpha_1^{i_1},0), (\alpha_1^{j_1},\alpha_2^{j_2})) \in \mathsf C_1(\alpha_1) \times \mathsf D_j(\alpha_1,\alpha_2)$ such that $(\alpha_1^{i_1},0)+(\alpha_1^{j_1},\alpha_2^{j_2}) =(0,\alpha_2^{k_2})$.
Clearly, $j_2 = k_2$ and $i_1 = j_1$ if $q_1$ is even and $i_1 = j_1+(q_1-1)/2$ if $q_1$ is odd.
Since $j_1-j_2 \equiv j \pmod m$, it follows that the multiplicity of $(0,\alpha_2^{k_2})$ in $\underline{\mathsf C_1(\alpha_1)}\, \underline{\mathsf D_j(\alpha_1,\alpha_2)}$ is $(q_1-1)/m$.
In a similar fashion, one can show that the multiplicity of $\mathsf C_k(\alpha_k)$ in $\underline{\mathsf C_i(\alpha_i)}\, \underline{\mathsf D_j(\alpha_1,\alpha_2)}$ is $(q_i-1)/m$ if $k \ne i$ and $0$ otherwise.

Fix $k \in \{0,\dots,m-1\}$.
Next, we consider the multiplicity of elements of $\mathsf D_k(\alpha_1,\alpha_2)$ in $\underline{\mathsf C_i(\alpha_i)}\, \underline{\mathsf D_j(\alpha_1,\alpha_2)}$.
Suppose $(\alpha_1^{k_1},\alpha_2^{k_2}) \in \mathsf D_k(\alpha_1,\alpha_2)$.
Thus, $k_1-k_2 \equiv k \pmod m$. 
The multiplicity of $(\alpha_1^{k_1},\alpha_2^{k_2})$ in $\underline{\mathsf C_1(\alpha_1)}\, \underline{\mathsf D_j(\alpha_1,\alpha_2)}$ is equal to the cardinality of
\[
\left \{ ((\alpha_1^{i_1},0),(\alpha_1^{j_1},\alpha_2^{j_2})) \in \mathsf C_1(\alpha_1) \times \mathsf D_j(\alpha_1,\alpha_2) \; : \; (\alpha_1^{i_1},0)+(\alpha_1^{j_1},\alpha_2^{j_2}) =(\alpha_1^{k_1},\alpha_2^{k_2}) \right \}.
\]
Clearly $j_2 = k_2$, and so we are counting the number of solutions to $\alpha_1^{i_1} + \alpha_1^{j_1} = \alpha_1^{k_1}$, where $k_1-j_1 \equiv k-j \pmod m$ and $i_1-j_1 \in \{0,\dots,q_1-2\}$.
Observe that $c_m(\alpha_1; a,k-j)$ is equal to the number of all such solutions where $i_1-j_1 \equiv a \pmod m$.
Thus, the multiplicity of $(\alpha_1^{k_1},\alpha_2^{k_2})$ in $\underline{\mathsf C_1(\alpha_1)}\, \underline{\mathsf D_j(\alpha_1,\alpha_2)}$ is $\sum_{a=0}^{m-1}c_m(\alpha_1; a,k-j)$.

Similarly, one can show mutatis mutandis that the multiplicity of $(\alpha_1^{k_1},\alpha_2^{k_2})\in \mathsf D_k(\alpha_1,\alpha_2)$ in $\underline{\mathsf C_2(\alpha_2)}\,\underline{\mathsf D_j(\alpha_1,\alpha_2)}$ is $\sum_{a=0}^{m-1}c_m(\alpha_2; a,k-j)$.
Using Theorem~\ref{thm:cycBasic} (ii), we obtain  \eqref{eqn:CD}.

Next, we consider the multiplicity of elements of $\mathsf C_1(\alpha_1)$ in $\underline{\mathsf D_i(\alpha_1,\alpha_2)}\,\underline{\mathsf D_j(\alpha_1,\alpha_2)}$.
The multiplicity of $(\alpha_1^{k_1},0)$ in $\underline{\mathsf D_i(\alpha_1,\alpha_2)}\,\underline{\mathsf D_j(\alpha_1,\alpha_2)}$ is equal to the cardinality of
\[
\left \{ ((\alpha_1^{i_1},\alpha_2^{i_2}),(\alpha_1^{j_1},\alpha_2^{j_2})) \in \mathsf D_i(\alpha_1,\alpha_2) \times \mathsf D_j(\alpha_1,\alpha_2) \; : \; (\alpha_1^{i_1},\alpha_2^{i_2})+(\alpha_1^{j_1},\alpha_2^{j_2}) =(\alpha_1^{k_1},0) \right \}.
\]
Thus, $i_2 = j_2$ if $q_2$ is even and $i_2 = j_2+mn_2/2$ if $q_2$ is odd.
We have $j_1-j_2 \equiv j$ and $i_1-i_2 \equiv i$, which implies $i_1-j_1 \equiv i-j \pmod m$ if $q_2$ is even and $i_1-j_1 \equiv i-j+\frac{mn_2}{2} \pmod m$ if $q_2$ is odd.
For fixed $k_1$, the total number of solutions $(i_1,j_1)$ to $\alpha_1^{i_1}+\alpha_1^{j_1}=\alpha_1^{k_1}$ is $c_m(\alpha_1; i-j,k_1-j_1)$ if $q_2$ is even and $c_m(\alpha_1; i-j+\frac{mn_2}{2},k_1-j_1)$ if $q_2$ is odd.
For each solution $(i_1,j_1)$, there are $n_2$ values of $i_2 \in \{0,\dots,q_2-2\}$ that satisfy $i_1-i_2 \equiv i \pmod m$.
Hence, using Theorem~\ref{thm:cycBasic} (iii), if $q_2$ is even, the multiplicity of $(\alpha_1^{k_1},0)$ in $\underline{\mathsf D_i(\alpha_1,\alpha_2)}\,\underline{\mathsf D_j(\alpha_1,\alpha_2)}$ is 
\[
n_2\sum_{j_1=0}^{m-1} c_m(\alpha_1; i-j,k_1-j_1) = \begin{cases}
    (n_1-1)n_2, & \text{ if $ m \; | \; i -j$ and $q_1n_1$ is even, } \\
    (n_1-1)n_2, & \text{ if $m \; | \; i-j-\frac{m}{2}$ and $q_1n_1$ is odd, } \\
    n_1n_2, & \text{ otherwise;} \\
\end{cases}
\]
and if $q_2$ is odd then the multiplicity of $(\alpha_1^{k_1},0)$ in $\underline{\mathsf D_i(\alpha_1,\alpha_2)}\,\underline{\mathsf D_j(\alpha_1,\alpha_2)}$ is 
\[
n_2\sum_{j_1=0}^{m-1} c_m(\alpha_1; i-j+\frac{mn_2}{2},k_1-j_1) = \begin{cases}
    (n_1-1)n_2, & \text{ if $ m \; | \; i -j+\frac{mn_2}{2}$ and $q_1n_1$ is even, } \\
    (n_1-1)n_2, & \text{ if $m \; | \; i-j+\frac{m(n_2-1)}{2}$ and $q_1n_1$ is odd, } \\
    n_1n_2, & \text{ otherwise.} \\
\end{cases}
\]
Similarly, it follows that if $q_1$ is even, the multiplicity of $(0,\alpha_2^{k_2})$ in $\underline{\mathsf D_i(\alpha_1,\alpha_2)}\,\underline{\mathsf D_j(\alpha_1,\alpha_2)}$ is 
\[
n_1\sum_{j_2=0}^{m-1} c_m(\alpha_2; i-j,k_2-j_2) = \begin{cases}
    n_1(n_2-1), & \text{ if $ m \; | \; i -j$ and $q_2n_2$ is even, } \\
    n_1(n_2-1), & \text{ if $m \; | \; i-j-\frac{m}{2}$ and $q_2n_2$ is odd, } \\
    n_1n_2, & \text{ otherwise;} \\
\end{cases}
\]
and if $q_1$ is odd then the multiplicity of $(0,\alpha_2^{k_2})$ in $\underline{\mathsf D_i(\alpha_1,\alpha_2)}\,\underline{\mathsf D_j(\alpha_1,\alpha_2)}$ is  
\[
n_1\sum_{j_2=0}^{m-1} c_m(\alpha_2; i-j+\frac{mn_1}{2},k_2-j_2) = \begin{cases}
    n_1(n_2-1), & \text{ if $ m \; | \; i -j+\frac{mn_1}{2}$ and $q_2n_2$ is even, } \\
    n_1(n_2-1), & \text{ if $m \; | \; i-j+\frac{m(n_1-1)}{2}$ and $q_2n_2$ is odd, } \\
    n_1n_2, & \text{ otherwise;} \\
\end{cases}
\]

Lastly, we consider the multiplicity of elements of $\mathsf D_k(\alpha_1,\alpha_2)$ in $\underline{\mathsf D_i(\alpha_1,\alpha_2)}\,\underline{\mathsf D_j(\alpha_1,\alpha_2)}$.
Suppose that $(\alpha_1^{k_1},\alpha_2^{k_2}) \in \mathsf D_k(\alpha_1,\alpha_2)$.
The multiplicity of $(\alpha_1^{k_1},\alpha_2^{k_2})$ in $\underline{\mathsf D_i(\alpha_1,\alpha_2)}\,\underline{\mathsf D_j(\alpha_1,\alpha_2)}$ is equal to the cardinality of
\[
\left \{ ((\alpha_1^{i_1},\alpha_2^{i_2}),(\alpha_1^{j_1},\alpha_2^{j_2})) \in \mathsf D_i(\alpha_1,\alpha_2) \times \mathsf D_j(\alpha_1,\alpha_2) \; : \;  (\alpha_1^{i_1},\alpha_2^{i_2})+(\alpha_1^{j_1},\alpha_2^{j_2}) =(\alpha_1^{k_1},\alpha_2^{k_2}) \right \}.
\]
For fixed $a, b \in \{0,\dots,m-1\}$, the total number of solutions $(i_1,i_2,j_1,j_2)$ to the equations $\alpha_1^{i_1}+\alpha_1^{j_1}=\alpha_1^{k_1}$ and $\alpha_2^{i_2}+\alpha_1^{j_2}=\alpha_1^{k_2}$ such that $j_1 - i_1 \equiv a \pmod m $ and $k_1-i_1 \equiv b \pmod m$ is $c_m(\alpha_1; a,b)c_m(\alpha_2; a+i-j,b+i-k)$.
Therefore, the multiplicity of $(\alpha_1^{k_1},\alpha_2^{k_2})$ in $\underline{\mathsf D_i(\alpha_1,\alpha_2)}\,\underline{\mathsf D_j(\alpha_1,\alpha_2)}$ is  $\mathcal X_{i,j,k}^{(m)}(\alpha_1,\alpha_2)$.
Thus, we obtain \eqref{eqn:DD1} and \eqref{eqn:DD2}.
\end{proof}

Now we exhibit a few consequences of Lemma~\ref{lem:Spart-rankm+3}.
The first corollary follows directly from Lemma~\ref{lem:schurCC}.

\begin{corollary}
\label{cor:coherentRanklem}
Let $m \geqslant 2$ and $n_1,n_2 \geqslant 1$ be integers. Suppose that $q_1 = 1+mn_1$ and $q_2 = 1+mn_2$ are prime powers.
Let $\alpha_1$ and $\alpha_2$ be primitive elements of $\operatorname{GF}(q_1)$ and $\operatorname{GF}(q_2)$, respectively.
Then $\Gamma_m(\alpha_1,\alpha_2)$ has coherent rank at most $m+3$.
\end{corollary}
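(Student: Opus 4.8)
The plan is to produce a single concrete Schur ring of rank exactly $m+3$ that contains the group-ring element $\underline{S}$ attached to the connection set of $\Gamma_m(\alpha_1,\alpha_2)$, and then to transfer this bound to the coherent rank of the Cayley graph through Lemma~\ref{lem:schurCC}. Since Lemma~\ref{lem:schurCC} equates $\operatorname{rk}\mathcal W(\operatorname{Cay}(G,S))$ with the rank of the \emph{minimal} Schur ring containing $\underline{S}$, and minimality makes that ring a subring of any Schur ring containing $\underline{S}$, it suffices to display one such ring of rank $m+3$.

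First I would invoke Lemma~\ref{lem:Spart-rankm+3}, which guarantees that
\[
\mathcal P = \left \{ \{(0,0)\}, \mathsf C_1(\alpha_1), \mathsf C_2(\alpha_2) \right \} \cup \left \{ \mathsf D_{i}(\alpha_1,\alpha_2) \; : \; i \in \{0,\dots,m-1\}  \right \}
\]
is a Schur partition of $G = \operatorname{GF}(q_1) \times \operatorname{GF}(q_2)$. Counting its parts — the singleton $\{(0,0)\}$, the two sets $\mathsf C_1(\alpha_1)$ and $\mathsf C_2(\alpha_2)$, and the $m$ sets $\mathsf D_0(\alpha_1,\alpha_2),\dots,\mathsf D_{m-1}(\alpha_1,\alpha_2)$ — gives exactly $m+3$ basic sets, so the associated Schur ring $\mathcal S_0 = \langle \underline{\{(0,0)\}}, \underline{\mathsf C_1(\alpha_1)}, \underline{\mathsf C_2(\alpha_2)}, \underline{\mathsf D_0(\alpha_1,\alpha_2)},\dots,\underline{\mathsf D_{m-1}(\alpha_1,\alpha_2)} \rangle$ has rank $m+3$. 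Next I would note that the connection set $S = \mathsf C_1(\alpha_1) \cup \mathsf D_0(\alpha_1,\alpha_2)$ is a union of two parts of $\mathcal P$, so $\underline{S} = \underline{\mathsf C_1(\alpha_1)} + \underline{\mathsf D_0(\alpha_1,\alpha_2)} \in \mathcal S_0$; moreover, since every element of $\mathsf C_1(\alpha_1)$ and of $\mathsf D_0(\alpha_1,\alpha_2)$ has a nonzero coordinate, the identity $(0,0)$ does not lie in $S$, so Lemma~\ref{lem:schurCC} applies. Writing $\mathcal S$ for the minimal Schur ring over $G$ containing $\underline{S}$, minimality forces $\mathcal S \subseteq \mathcal S_0$, whence $\operatorname{rk}(\mathcal S) \leqslant \operatorname{rk}(\mathcal S_0) = m+3$, and Lemma~\ref{lem:schurCC} then yields $\operatorname{rk}\mathcal W(\Gamma_m(\alpha_1,\alpha_2)) = \operatorname{rk}(\mathcal S) \leqslant m+3$.

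All of the substantive work — verifying that $\mathcal P$ is genuinely a Schur partition and computing the structure constants that make $\mathcal S_0$ closed under multiplication — has already been discharged in Lemma~\ref{lem:Spart-rankm+3}, so the corollary itself presents no real obstacle. The only point requiring a moment's care is the passage from ``$\underline{S}\in\mathcal S_0$'' to ``$\mathcal S\subseteq\mathcal S_0$'', which is immediate from the defining minimality of $\mathcal S$. I would emphasise that the bound is merely an upper bound and need not be attained; the sharper question of when the coherent rank actually drops below $m+3$ is precisely what the later sections address.
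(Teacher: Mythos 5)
Your proposal is correct and is essentially the paper's own argument: the paper derives this corollary directly from Lemma~\ref{lem:Spart-rankm+3} (which supplies the rank-$(m+3)$ Schur partition) together with Lemma~\ref{lem:schurCC}, exactly as you do. Your write-up merely makes explicit the routine steps the paper leaves implicit, namely that $\underline{S}$ lies in the resulting Schur ring, that $(0,0)\notin S$, and that minimality gives $\operatorname{rk}(\mathcal S)\leqslant m+3$.
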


Note that the coherent closure of $\Gamma_m(\alpha_1,\alpha_2)$ may have rank less than $m+3$.
Indeed, we shall see examples below when the coherent rank of $\Gamma_m(\alpha_1,\alpha_2)$ is equal to $3$, which corresponds to the case when $\Gamma_m(\alpha_1,\alpha_2)$ is a strongly regular graph.

Next, we obtain a necessary and sufficient condition for $\Gamma_m(\alpha_1,\alpha_2)$ to be a Neumaier graph, which is the main result of this section.

\begin{theorem}
\label{thm:main}
Let $m \geqslant 2$ and $n_1,n_2 \geqslant 1$ be integers. Suppose that $q_1 = 1+mn_1$ and $q_2 = 1+mn_2$ are prime powers.
Let $\alpha_1$ and $\alpha_2$ be primitive elements of $\operatorname{GF}(q_1)$ and $\operatorname{GF}(q_2)$, respectively.
Then $\Gamma_m(\alpha_1,\alpha_2)$ is a Neumaier graph if and only if $q_1n_1 \equiv q_2n_2 \pmod 2$ and
\begin{equation}
\label{eqn:edgeregular}
    \mathcal X^{(m)}_{0,0,0}(\alpha_1,\alpha_2) = 
       q_1+n_1 n_2-2n_1-n_2.
\end{equation}
    Furthermore, $\Gamma_m(\alpha_1,\alpha_2)$ has parameters $\left (q_1 q_2, (q_1-1)(n_2+1), q_1-2+(n_1-1)n_2; n_1, q_1\right )$,
    and the number of common neighbours between a pair of nonadjacent vertices belongs to the set
    \[
    \left \{n_1(n_2+1)\right \} \cup \bigcup_{i=1}^{m-1} \left \{ 2n_1+ \mathcal X^{(m)}_{0,0,i}(\alpha_1,\alpha_2) \;\right \}.
    \]
\end{theorem}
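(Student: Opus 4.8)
The plan is to work entirely in the group ring $\mathbb Z G$ with $G = \operatorname{GF}(q_1) \times \operatorname{GF}(q_2)$, translating triangle and common-neighbour counts of the Cayley graph into coefficients of products of basic sets. Writing $S = \mathsf C_1(\alpha_1) \cup \mathsf D_0(\alpha_1,\alpha_2)$ and suppressing the arguments $\alpha_1,\alpha_2$, we have $\underline S = \underline{\mathsf C_1} + \underline{\mathsf D_0}$. In the undirected case $S = -S$, so the number of common neighbours of two vertices whose difference is $d$ equals the coefficient of $d$ in $\underline{S}^{\,2}$. Since $\underline S \in \mathcal S$, the product $\underline{S}^{\,2}$ lies in the Schur ring $\mathcal S$ of Lemma~\ref{lem:Spart-rankm+3}, so its coefficient function is constant on each basic set $\{(0,0)\}$, $\mathsf C_1$, $\mathsf C_2$, $\mathsf D_0,\dots,\mathsf D_{m-1}$. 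Consequently the graph is edge-regular if and only if the coefficients of $\underline{\mathsf C_1}$ and $\underline{\mathsf D_0}$ in $\underline{S}^{\,2}$ coincide, and in that case the common-neighbour counts for nonadjacent pairs are read off from the coefficients of $\underline{\mathsf C_2}$ and of $\underline{\mathsf D_i}$ for $i \neq 0$.

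The computational heart is to expand
\[
\underline{S}^{\,2} = \underline{\mathsf C_1}^{\,2} + 2\,\underline{\mathsf C_1}\,\underline{\mathsf D_0} + \underline{\mathsf D_0}^{\,2}
\]
using \eqref{eqn:641}, \eqref{eqn:CD}, and—under the hypothesis $q_1n_1 \equiv q_2n_2 \pmod 2$, so that the same-parity branch \eqref{eqn:DD1} applies with $i=j=0$—the expansion of $\underline{\mathsf D_0}^{\,2}$ through $\underline{G_{0,0}^{(m)}}$. After unwinding the definitions of $\underline{F_{i,j}^{(m)}}$ and $\underline{G_{i,j}^{(m)}}$ and collecting terms, the coefficient of $\underline{\{(0,0)\}}$ should equal the degree $k = (q_1-1)(n_2+1)$ (a useful sanity check, since $mn_1 = q_1-1$), the coefficient of $\underline{\mathsf C_1}$ should be $q_1 - 2 + (n_1-1)n_2$, and the coefficient of $\underline{\mathsf D_0}$ should be $2(n_1-1) + \mathcal X^{(m)}_{0,0,0}$. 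Equating the latter two and simplifying yields precisely \eqref{eqn:edgeregular}, with common value $\lambda = q_1-2+(n_1-1)n_2$. The coefficients of $\underline{\mathsf C_2}$ and of $\underline{\mathsf D_i}$ ($i\neq 0$) then simplify to $n_1(n_2+1)$ and $2n_1 + \mathcal X^{(m)}_{0,0,i}$, giving the stated set of common-neighbour counts.

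For the regular clique I would take $C = \operatorname{GF}(q_1) \times \{0\}$, of order $q_1$. Two distinct elements of $C$ differ by some $(\beta,0)$ with $\beta \neq 0$, which lies in $\mathsf C_1 \subseteq S$, so $C$ is a clique. A vertex $(u,w)\notin C$ (that is, $w \neq 0$) is adjacent to $(x,0)\in C$ if and only if $(u-x,w) \in \mathsf D_0$; since $w$ is a fixed nonzero element its cyclotomic class mod $m$ is determined, and $u-x$ must be a nonzero element of $\operatorname{GF}(q_1)$ in that class. As $x$ ranges over $\operatorname{GF}(q_1)$ the difference $u-x$ runs bijectively over $\operatorname{GF}(q_1)$, so exactly $(q_1-1)/m = n_1$ choices of $x$ work. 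Hence $C$ is $n_1$-regular with $n_1 \geq 1 > 0$, and the graph—being non-complete and, by the edge-regularity just established, edge-regular—is a Neumaier graph with parameters $(q_1q_2,\,(q_1-1)(n_2+1),\,q_1-2+(n_1-1)n_2;\,n_1,\,q_1)$. Crucially, this clique exists and is $n_1$-regular independently of \eqref{eqn:edgeregular}; thus the forward implication reduces to observing that a Neumaier graph is undirected—whence $q_1n_1 \equiv q_2n_2 \pmod 2$ by Lemma~\ref{lem:undirect}—and edge-regular, which forces the coefficients of $\underline{\mathsf C_1}$ and $\underline{\mathsf D_0}$ to agree, i.e.\ \eqref{eqn:edgeregular}.

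The main obstacle I anticipate is purely organisational bookkeeping: correctly unwinding the nested definitions of $\underline{F_{i,j}^{(m)}}$ and $\underline{G_{i,j}^{(m)}}$, and making sure the parity hypothesis selects branch \eqref{eqn:DD1} rather than \eqref{eqn:DD2}. The two small points that need explicit care are tying the parity condition to undirectedness via Lemma~\ref{lem:undirect}, so that the common-neighbour interpretation of $\underline{S}^{\,2}$ is legitimate, and checking that both $\mathsf C_1$ and $\mathsf D_0$ are nonempty (they have $q_1-1$ and $mn_1n_2$ elements), so that edge-regularity genuinely equates the two relevant coefficients.
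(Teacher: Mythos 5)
Your proposal is correct and follows essentially the same route as the paper's proof: the parity condition via Lemma~\ref{lem:undirect}, the common-neighbour counts read off as coefficients of $\underline{S}^{\,2}$ expanded through Lemma~\ref{lem:Spart-rankm+3}, and the $n_1$-regular clique $\{(0,0)\}\cup\mathsf C_1(\alpha_1)=\operatorname{GF}(q_1)\times\{0\}$ (which the paper verifies by \eqref{eqn:CD} rather than your equivalent direct count). Your explicit bookkeeping is also accurate — in particular the coefficient $2(n_1-1)+\mathcal X^{(m)}_{0,0,0}(\alpha_1,\alpha_2)$ of $\underline{\mathsf D_0(\alpha_1,\alpha_2)}$, which equated with $q_1-2+(n_1-1)n_2$ yields exactly \eqref{eqn:edgeregular}.
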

\begin{proof}
    The graph $\Gamma = \Gamma_m(\alpha_1,\alpha_2)$ has connection set 
    $S = \mathsf C_1(\alpha_1) \cup \mathsf D_0(\alpha_1,\alpha_2)$.
    By Lemma~\ref{lem:undirect}, $\Gamma$ is undirected if and only if $q_1n_1 \equiv q_2n_2 \pmod 2$.
    Next, we show that $\Gamma$ has a regular clique.
    Define $\mathfrak C := \{(0,0)\} \cup \mathsf C_1(\alpha_1)$.
By \eqref{eqn:CD}, it follows that
\[
\underline{ \mathfrak C}\left (\underline{\mathsf C_1(\alpha_1) \cup \mathsf D_0(\alpha_1,\alpha_2)}\right ) = n_1\left ( \underline{\operatorname{GF}(q_1) \times \operatorname{GF}(q_2)} - \underline{\mathfrak C}\right )+(q_1-1)\underline{ \mathfrak C}.
\]
In other words, $\mathfrak C$ is an $n_1$-regular clique.

Let $(x,y) \in \operatorname{GF}(q_1) \times \operatorname{GF}(q_2)$.
By virtue of $\Gamma$ being a Cayley graph, the number of common neighbours of $(0,0)$ and $(x,y)$ is equal to the multiplicity of $(x,y)$ in 
$$\underline{S}^2 = \underline{\mathsf C_1(\alpha_1)}^2 + 2\underline{\mathsf C_1(\alpha_1)}\,\underline{\mathsf D_0(\alpha_1,\alpha_2)} + \underline{\mathsf D_0(\alpha_1,\alpha_2)}^2.$$
The rest of the statement of the corollary follows from Lemma~\ref{lem:Spart-rankm+3}.
\end{proof}

Theorem~\ref{thm:main} highlights $\mathcal X^{(m)}_{0,0,0}(\alpha_1,\alpha_2)$ as the key to determining whether or not the graph $\Gamma_m(\alpha_1,\alpha_2)$ is a Neumaier graph.
We can find explicit expressions for $\mathcal X^{(m)}_{0,0,0}(\alpha_1,\alpha_2)$ in the literature when $q_1$ and $q_2$ are powers of the same prime $p$.
Using these expressions, we recover previously discovered constructions of strongly regular graphs (see Remark~\ref{rem:srgOA} and Remark~\ref{rem:uniformsrg}, below).

\begin{theorem}[{\cite[Theorem 1]{vandiver}}]
\label{thm:vandiver}
    Let $m \geqslant 2$ and $n \geqslant 1$ be integers such that $q = 1+mn$ is a prime power.
    Let $\alpha$ be a primitive element of $\operatorname{GF}(q)$. 
    Then $\mathcal X^{(m)}_{0,0,0}(\alpha,\alpha) = (n-1)^2+n(m-1)$ and, for each $i \in \{1,\dots,m-1\}$, we have $\mathcal X^{(m)}_{0,0,i}(\alpha,\alpha) = n(n-1)$.
\end{theorem}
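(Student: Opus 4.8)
The plan is to recast $\mathcal X^{(m)}_{0,0,k}(\alpha,\alpha)$ as a counting quantity and then evaluate it by Fourier analysis over the multiplicative characters of $\operatorname{GF}(q)$ of order dividing $m$. Writing $C_i=\{\alpha^\ell:\ell\equiv i\pmod m\}$ for the cyclotomic classes, the definition of $c_m(\alpha;a,b)$ is exactly the number of $x\in C_b$ with $x-1\in C_a$. Unwinding $\mathcal X^{(m)}_{0,0,k}(\alpha,\alpha)$ with $\alpha_1=\alpha_2=\alpha$ and $i=j=0$, it counts ordered pairs $(x,y)$ with $x,y\in\operatorname{GF}(q)\setminus\{0,1\}$ such that $x-1$ and $y-1$ lie in a common class and the class indices of $x$ and $y$ differ by $k$ modulo $m$. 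As a first sanity check I would sum over all $k$: as $k$ ranges over a full residue system the inner condition decouples, so the total becomes $\sum_a\bigl(\sum_b c_m(\alpha;a,b)\bigr)^2$, a sum of squares of row sums governed by Theorem~\ref{thm:cycBasic}(iii); this gives $\sum_{k}\mathcal X^{(m)}_{0,0,k}(\alpha,\alpha)=(n-1)^2+(m-1)n^2$, matching the total predicted by the two target formulas.

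Next I would expand each class indicator (evaluated only at nonzero arguments) via $\mathbb 1[z\in C_b]=\tfrac1m\sum_t\omega^{-tb}\eta^t(z)$, where $\eta$ has order $m$ and $\omega=\eta(\alpha)$ is a primitive $m$-th root of unity. Summing over $a$ pairs the characters attached to $x-1$ and $y-1$ as conjugates, and summing over $b$ does the same for $x$ and $y$, so the count collapses to
\[
\mathcal X^{(m)}_{0,0,k}(\alpha,\alpha)=\frac1{m^2}\sum_{s=0}^{m-1}\sum_{t=0}^{m-1}\omega^{-sk}\,\bigl|J(s,t)\bigr|^2,\qquad J(s,t):=\sum_{z\ne 0,1}\eta^s(z)\,\eta^t(z-1).
\]
Each $J(s,t)$ is, up to a unit factor $\eta^t(-1)$, an ordinary Jacobi sum, so the magnitudes are classical: $|J(0,0)|^2=(q-2)^2$; $|J(s,t)|^2=1$ whenever exactly one of $s,t$ vanishes or both are nonzero with $s+t\equiv 0$; and $|J(s,t)|^2=q$ in the remaining non-degenerate case. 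Crucially, only magnitudes occur, so the sign factors $\eta^t(-1)$ — the very terms responsible for the parity dichotomies in Theorem~\ref{thm:cycBasic} — drop out.

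The decisive observation is that $\sum_t|J(s,t)|^2$ is independent of $s$ for every $s\ne0$, equal to $2+(m-2)q$, whereas for $s=0$ it equals $(q-2)^2+(m-1)$. Consequently the only $k$-dependence of $\mathcal X^{(m)}_{0,0,k}(\alpha,\alpha)$ sits in $\sum_{s\ne0}\omega^{-sk}$, which equals $m-1$ when $k\equiv0$ and $-1$ otherwise. This single dichotomy both forces every nonzero $k$ to give the same value and isolates $k=0$; substituting $q=mn+1$ and simplifying then yields $(n-1)^2+n(m-1)$ when $k=0$ and $n(n-1)$ when $k\ne0$. Summing these over all $k$ recovers $\tfrac1m\bigl((q-2)^2+(m-1)\bigr)=(n-1)^2+(m-1)n^2$, agreeing with the check above.

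I expect the main obstacle to be the careful case analysis and evaluation of the Jacobi sums $J(s,t)$ — in particular the degenerate cases with a trivial character or with $s+t\equiv0$ — together with verifying that the sign factors $\eta^t(-1)$ genuinely cancel once one passes to $|J(s,t)|^2$. The single genuinely external input is the non-degenerate evaluation $|J(s,t)|^2=q$; everything else reduces to orthogonality of characters and the substitution $q=mn+1$.
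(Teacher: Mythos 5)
Your proposal is correct, but it necessarily differs from the paper's treatment, because the paper contains no proof of this statement at all: Theorem~\ref{thm:vandiver} is simply quoted from Vandiver's classical work on cyclotomy via the citation \cite[Theorem 1]{vandiver}. What you have written is a self-contained character-sum proof, and its three pillars all check out. The orthogonality collapse is right: expanding the four class indicators in $\sum_{a,b}c_m(\alpha;a,b)c_m(\alpha;a,b-k)$ and summing over $a$ and $b$ forces the character exponents attached to $(x,x-1)$ and $(y,y-1)$ to be negatives of each other, giving
\[
\mathcal X^{(m)}_{0,0,k}(\alpha,\alpha)=\frac1{m^2}\sum_{s=0}^{m-1}\sum_{t=0}^{m-1}\omega^{-sk}\bigl|J(s,t)\bigr|^2,\qquad J(s,t)=\sum_{z\ne0,1}\eta^s(z)\,\eta^t(z-1),
\]
with the unit factors $\eta^t(-1)$ disappearing inside the modulus. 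The Jacobi-sum case analysis is the standard one ($(q-2)^2$ when $s=t=0$; $1$ when exactly one of $s,t$ vanishes or $s,t\ne0$ with $s+t\equiv0$; $q$ otherwise), so the column sums are $(q-2)^2+(m-1)$ for $s=0$ and $2+(m-2)q$ for every $s\ne0$, and the final arithmetic with $q=mn+1$ indeed yields $(n-1)^2+n(m-1)$ for $k=0$ and $n(n-1)$ for $k\ne0$; I verified both simplifications, as well as your consistency check $\sum_k\mathcal X^{(m)}_{0,0,k}(\alpha,\alpha)=(n-1)^2+(m-1)n^2$ against Theorem~\ref{thm:cycBasic}(iii). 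Beyond replacing a citation by a proof, your route buys two things. First, its only nonelementary input is $|J|^2=q$ for nondegenerate Jacobi sums, and it makes transparent why the answer is independent of the choice of $\alpha$, of the nonzero residue $k$, and of the parity dichotomies of Theorem~\ref{thm:cycBasic}: all of that information sits in phases that cancel upon taking $|J(s,t)|^2$. Second, your intermediate identity $\mathcal X^{(m)}_{0,0,0}(\alpha,\alpha)=\bigl((q-2)^2+(m-1)(3+(m-2)q)\bigr)/m^2$ is precisely the diagonal case $\alpha_1=\alpha_2$ of the conjecture stated at the end of Section~\ref{sec:6beyond}, and the same expansion applied to two distinct fields (with compatible characters normalised by $\eta_1(\alpha_1)=\eta_2(\alpha_2)=\omega$) gives $\mathcal X^{(m)}_{0,0,k}(\alpha_1,\alpha_2)=\frac1{m^2}\sum_{s,t}\omega^{-sk}J_1(s,t)\overline{J_2(s,t)}$, where the cross-products of Jacobi sums no longer reduce to magnitudes; this is exactly where the $\Phi$- and $\Psi$-terms of Section~\ref{sec:6beyond} come from, so your method is a promising attack on Problem~\ref{prob:X}. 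One small point of care: the indicator expansion requires the convention that characters vanish at $0$ and is applied only at arguments $x,y\notin\{0,1\}$, which your restriction of the count already guarantees.
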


It follows immediately from Theorem~\ref{thm:main} and Theorem~\ref{thm:vandiver} that there exist strongly regular graphs of the form $\Gamma_m(\alpha_1,\alpha_2)$ when $\alpha_1 = \alpha_2$.

\begin{remark}
\label{rem:srgOA}
Let $m \geqslant 2$ be an integer.
    Suppose that $q = 1 + mn$ is a prime power.
    Let $\alpha$ be a primitive element for $\operatorname{GF}(q)$.
    Then $\Gamma_m(\alpha,\alpha)$ is a strongly regular graph with parameters
    \[
    \left (q^2, n(q+m-1), q-2+n(n-1), n(n+1)\right ).
    \]
    Thus, $\Gamma_m(\alpha,\alpha)$ is a strongly regular graph of Latin square type in the sense of \cite{vDM08}.
\end{remark}

Baumert et al.~\cite{BAUMERT1982} introduced the notion of \emph{uniform cyclotomy}, which we now describe.
The set of cyclotomic numbers $\left \{c_m(\alpha; i,j) \; : \; i,j \in \{1,\dots,m\}\right \}$ is called \textbf{uniform} if 
\begin{align*}
    c_m(\alpha; i,0) = c_m(\alpha; 0,i) &= c_m(\alpha; i,i) = c_m(\alpha; 0,1) \text{ for all $i \ne 0$; and} \\
    c_m(\alpha; i,j) &= c_m(\alpha; 1,2) \text{ for all $0 \ne i \ne j \ne 0$.}
\end{align*}
The following lemma is a special case of the results of Baumert et al.~\cite{BAUMERT1982}.
\begin{lemma}
\label{lem:quniform}
    Let $m \geqslant 2$ and $n \geqslant 1$ be integers such that $q = 1+mn$ is a prime power of a prime $p$.
    Let $\alpha$ be a primitive element of $\operatorname{GF}(q)$.
    Then $\left \{c_m(\alpha; i,j) \; : \; i,j \in \{1,\dots,m\}\right \}$ is uniform if and only if $-1$ is a power of $p$ modulo $m$.
    Furthermore, if $-1$ is a power of $p$ modulo $m$ then $qn$ is even, $q = r^2$ for some $r \equiv 1 \pmod m$, and
    \begin{align*}
        c_{m}(\alpha; 0,0) &= \frac{(r-1)^2}{m^2}-\frac{(m-3)(r-1)}{m}-1; \\
        c_{m}(\alpha; i,0) &= c_{m}(\alpha; 0,i) = c_{m}(\alpha; i,i) = \frac{(r-1)(r-1+m)}{m^2} \text{ for all $i \in \{1,\dots,m-1\}$}; \\
        c_{m}(\alpha; i,j) &= \frac{(r-1)^2}{m^2} \text{ for all $0 \ne i \ne j \ne 0$.}
    \end{align*}
\end{lemma}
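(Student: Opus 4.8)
The plan is to obtain everything from the analysis of uniform cyclotomy by Baumert, Mills, and Ward~\cite{BAUMERT1982}. The equivalence is precisely their characterization: the cyclotomic numbers of order $m$ over $\operatorname{GF}(q)$, with $q$ a power of the prime $p$, are uniform if and only if $-1$ belongs to the subgroup of $(\mathbb Z/m\mathbb Z)^\times$ generated by $p$, that is, $-1$ is a power of $p$ modulo $m$. I would cite this directly for the stated biconditional; the remaining task is to deduce, in the forward direction, that $q$ is a square, that $qn$ is even, and the three explicit values.

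Assume then that $-1$ is a power of $p$ modulo $m$ and (taking $m \geqslant 3$, the order-$2$ case being classical) let $s$ be least with $p^s \equiv -1 \pmod m$. Minimality forces the multiplicative order of $p$ modulo $m$ to equal $2s$: indeed $-1$ is the unique involution of the cyclic group $\langle p \rangle$, so $p^s = p^{d/2}$ where $d$ is that order, whence $s = d/2$. Since $m \mid q - 1 = p^f - 1$, we have $2s \mid f$, so $f$ is even and $q = p^f$ is a perfect square. Writing $f = 2st'$ gives $p^{f/2} = (p^s)^{t'} \equiv (-1)^{t'} \pmod m$, so exactly one of $\pm p^{f/2}$ is congruent to $1$ modulo $m$; I take $r$ to be that signed square root, so that $q = r^2$ and $r \equiv 1 \pmod m$. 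For the parity statement: if $p = 2$ then $q$ is even; if $p$ is odd then $r$ is odd, and writing $r - 1 = m u$ yields $n = (r^2-1)/m = u(r+1)$ with $r + 1$ even, so $n$ is even. In both cases $qn$ is even.

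For the explicit values I would use that uniformity collapses all cyclotomic numbers to three quantities: $A := c_m(\alpha; 0,0)$, the common value $B := c_m(\alpha; i,0) = c_m(\alpha; 0,i) = c_m(\alpha; i,i)$ for $i \ne 0$, and $C := c_m(\alpha; i,j)$ for distinct nonzero $i, j$. Theorem~\ref{thm:cycBasic}(ii) supplies two linear relations: summing the first index at $b \equiv 0$ gives $A + (m-1)B = n - 1$, while summing the first index at any $b \not\equiv 0$ gives $2B + (m-2)C = n$. A third, independent relation comes from the semiprimitive Gauss sum (equivalently, the Gaussian period) evaluated in~\cite{BAUMERT1982}, which with the above normalization of $r$ pins down $C = (r-1)^2/m^2$. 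Substituting $n = (r-1)(r+1)/m$ and solving the resulting $3 \times 3$ system then yields $B = (r-1)(r-1+m)/m^2$ and $A = (r-1)^2/m^2 - (m-3)(r-1)/m - 1$, matching the claim; one checks directly that these satisfy both sum relations. Alternatively, one may quote the cyclotomic numbers tabulated in~\cite{BAUMERT1982} and substitute $q = r^2$.

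The principal obstacle is the sign bookkeeping for the square root. The value of the Gaussian period recorded in~\cite{BAUMERT1982} must be matched to the normalization $r \equiv 1 \pmod m$ rather than $r = +\sqrt q$, since only this choice renders $A$, $B$, $C$ nonnegative integers; the smallest instance $q = 4$, $m = 3$, which forces $r = -2$, already shows that the positive root is inadmissible. Once the period value is aligned with this convention, the rest is routine linear algebra.
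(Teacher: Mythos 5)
Your proposal is correct and takes essentially the same route as the paper: the paper offers no proof of Lemma~\ref{lem:quniform} at all, presenting it purely as a special case of the results of Baumert, Mills, and Ward~\cite{BAUMERT1982}, and you likewise reduce both the biconditional and the key period value to that reference. The additional material you supply (the order argument giving $q = r^2$, the parity of $qn$, the sign normalization $r \equiv 1 \pmod m$, and the linear algebra recovering the three cyclotomic values from Theorem~\ref{thm:cycBasic}(ii)) is correct and simply makes explicit the translation that the paper leaves to the reader.
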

We can use Lemma~\ref{lem:quniform} to show that the Neumaier graphs $\Gamma_m(\alpha_1,\alpha_2)$ where both sets of cyclotomic numbers are uniform must be strongly regular.

\begin{theorem}
\label{thm:uniformsrg}
Let $m \geqslant 2$ be an integer.
Let $q_1, q_2 \equiv 1\pmod m$ be prime powers
and $\alpha_1, \alpha_2$ be primitive elements of $\operatorname{GF}(q_1)$ and $\operatorname{GF}(q_2)$, respectively.
    Suppose that $\Gamma_{m}(\alpha_1,\alpha_2)$ is a Neumaier graph where both sets of cyclotomic numbers $\left \{c_m(\alpha_1; i,j) \; : \; i,j \in \{1,\dots,m\}\right \}$ and $\left \{c_m(\alpha_2; i,j) \; : \; i,j \in \{1,\dots,m\}\right \}$ are uniform.
    Then $\Gamma_{m}(\alpha_1,\alpha_2)$ is strongly regular.
\end{theorem}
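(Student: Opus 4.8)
The plan is to use Theorem~\ref{thm:main}, which already guarantees that $\Gamma := \Gamma_m(\alpha_1,\alpha_2)$ is edge-regular and records that the number of common neighbours of a nonadjacent pair lies in $\{n_1(n_2+1)\}\cup\bigcup_{i=1}^{m-1}\{2n_1+\mathcal X^{(m)}_{0,0,i}(\alpha_1,\alpha_2)\}$. A Neumaier graph is strongly regular precisely when every nonadjacent pair has the same number of common neighbours, so it suffices to prove $2n_1+\mathcal X^{(m)}_{0,0,i}(\alpha_1,\alpha_2)=n_1(n_2+1)$ for every $i\in\{1,\dots,m-1\}$; equivalently, that $\mathcal X^{(m)}_{0,0,i}(\alpha_1,\alpha_2)=n_1(n_2-1)$ is independent of $i$ and equal to that value.

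First I would invoke Lemma~\ref{lem:quniform} for each field to write $q_s=r_s^2$ with $r_s\equiv 1\pmod m$, to note that $q_sn_s$ is even (so the undirectedness hypothesis of Theorem~\ref{thm:main} is automatic), and to substitute $\rho_s:=(r_s-1)/m$, which gives $R_s=\rho_s^2$, $Q_s-R_s=\rho_s$, $P_s-R_s=-(m-3)\rho_s-1$, and $n_s=m\rho_s^2+2\rho_s$, where $P_s,Q_s,R_s$ are the three uniform values for $\operatorname{GF}(q_s)$. The key structural point is that in the uniform case $c_m(\alpha_s;a,b)$ depends only on the position of $(a,b)$ relative to the three ``lines'' $a\equiv0$, $b\equiv0$, $a\equiv b\pmod m$ through the origin; each nonzero point lies on at most one of them, so I can write $c_m(\alpha_s;a,b)=R_s+(Q_s-R_s)L(a,b)+(P_s-R_s)\delta(a,b)$, where $L$ is the indicator of the nonzero line points and $\delta$ the indicator of the origin.

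Next I would expand $\mathcal X^{(m)}_{0,0,i}(\alpha_1,\alpha_2)=\sum_{a,b}c_m(\alpha_1;a,b)\,c_m(\alpha_2;a,b-i)$ via this decomposition. Each of the nine resulting terms is a count of points of $\mathbb Z_m\times\mathbb Z_m$ subject to line/origin conditions on $(a,b)$ and on its shift $(a,b-i)$; the only nontrivial one is $\sum_{a,b}L(a,b)L(a,b-i)$, which a short case analysis on the line-types of the two points evaluates to exactly $m$ for every $i\neq0$. Crucially, all nine counts are independent of $i$ once $i\neq0$, so $\mathcal X^{(m)}_{0,0,i}(\alpha_1,\alpha_2)$ equals a single value $\mathcal X$ (a fixed polynomial in $\rho_1,\rho_2,m$) for all $i\in\{1,\dots,m-1\}$; this already equalises the counts across the classes $\mathsf D_1,\dots,\mathsf D_{m-1}$.

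Finally, I would perform the analogous expansion of $\mathcal X^{(m)}_{0,0,0}(\alpha_1,\alpha_2)$, where the two points coincide and so $L^2=L$, $\delta^2=\delta$, $L\delta=0$ simplify the sums, and compare the two polynomials. The decisive computation --- and the step I expect to be the main obstacle, since it is where the arithmetic of the construction conspires --- is the polynomial identity showing that $\mathcal X-n_1(n_2-1)$ is the scalar multiple by $-1/(m-1)$ of $\mathcal X^{(m)}_{0,0,0}(\alpha_1,\alpha_2)-\bigl(q_1+n_1n_2-2n_1-n_2\bigr)$. Because $\Gamma$ is assumed to be a Neumaier graph, the edge-regularity condition~\eqref{eqn:edgeregular} forces the latter expression to vanish, whence $\mathcal X=n_1(n_2-1)$. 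Consequently $2n_1+\mathcal X^{(m)}_{0,0,i}(\alpha_1,\alpha_2)=n_1(n_2+1)$ for every $i\in\{1,\dots,m-1\}$, every nonadjacent pair has the same number of common neighbours, and $\Gamma$ is strongly regular.
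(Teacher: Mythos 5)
Your proof is correct and, at the level of strategy, coincides with the paper's: both reduce, via Theorem~\ref{thm:main}, to showing $\mathcal X^{(m)}_{0,0,i}(\alpha_1,\alpha_2)=n_1(n_2-1)$ for all $i\in\{1,\dots,m-1\}$, and both get there by expressing everything through Lemma~\ref{lem:quniform} in terms of $r_1,r_2$. The difference lies in how the cyclotomic sums are evaluated, and your version is in fact the more careful one. The paper simply asserts closed forms, claiming that $\mathcal X^{(m)}_{0,0,0}-(q_1+n_1n_2-2n_1-n_2)$ and $\mathcal X^{(m)}_{0,0,i}-n_1(n_2-1)$ are \emph{both} equal to $(r_1-r_2)(m-1)(r_2+(m-1)r_1)/m^2$. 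Your indicator decomposition $c_m(\alpha_s;a,b)=R_s+(Q_s-R_s)L(a,b)+(P_s-R_s)\delta(a,b)$, with the counts $\sum_{a,b}L(a,b)L(a,b-i)=m$ for $i\neq0$ and $\sum_{a,b}L(a,b)^2=3(m-1)$, instead yields proportionality with constant $-1/(m-1)$, and your constant is the right one: writing $\rho_s=(r_s-1)/m$, one gets $\mathcal X^{(m)}_{0,0,i}-n_1(n_2-1)=(\rho_1-\rho_2)\bigl((m-1)\rho_1+\rho_2+1\bigr)$ while $\mathcal X^{(m)}_{0,0,0}-(q_1+n_1n_2-2n_1-n_2)=-(m-1)(\rho_1-\rho_2)\bigl((m-1)\rho_1+\rho_2+1\bigr)$. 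A concrete check: for $m=3$, $q_1=4$, $q_2=64$ (so $r_1=-2$, $r_2=-8$, $n_1=1$, $n_2=21$) one computes $\mathcal X^{(3)}_{0,0,0}=18$ against the edge-regular value $2$, a difference of $+16$, while $\mathcal X^{(3)}_{0,0,i}=12$ against $n_1(n_2-1)=20$, a difference of $-8=-16/(m-1)$; so the paper's two displayed formulas carry, respectively, a sign error and a spurious factor of $m-1$. This does not invalidate the paper's proof---the argument only uses that the two quantities vanish simultaneously, which any nonzero proportionality constant delivers---but your identity is the quantitatively correct form of the relation. The remaining ingredients of your outline (the $i$-independence of all nine terms for $i\neq0$, uniformity forcing $q_sn_s$ even so that undirectedness is automatic, and the final appeal to \eqref{eqn:edgeregular}) are all sound, so your proof is complete once the stated polynomial expansion is written out.
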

\begin{proof}
     By Theorem~\ref{thm:main}, $q_1n_1 \equiv q_2n_2 \pmod 2$ and
$\mathcal X^{(m)}_{0,0,0}(\alpha_1,\alpha_2) = 
       q_1+n_1 n_2-2n_1-n_2$.
       Using Lemma~\ref{lem:quniform}, we can write $q_1 = r_1^2, q_2 = r_2^2$ for some $r_1,r_2 \equiv 1 \pmod m$.
       Furthermore, from Lemma~\ref{lem:quniform} it also follows that
\begin{align*}
    \mathcal X^{(m)}_{0,0,0}(\alpha_1,\alpha_2) - (q_1+n_1 n_2-2n_1-n_2)&= \frac{(r_1-r_2) (m-1) (r_2+(m-1)r_1)}{m^2}.
\end{align*}
Again, by Lemma~\ref{lem:quniform}, for each $i \in \{1,\dots,m-1\}$, we have
\[
\mathcal X^{(m)}_{0,0,i}(\alpha_1,\alpha_2) - n_1(n_2-1) = \frac{(r_1-r_2) (m-1) (r_2+(m-1)r_1)}{m^2}.
\]
Hence, by Theorem~\ref{thm:main}, the graph $\Gamma_m(\alpha_1,\alpha_2)$ is strongly regular.
\end{proof}

\begin{remark}
\label{rem:uniformsrg}
    One can extract from the proof of Theorem~\ref{thm:uniformsrg} that the resulting strongly regular graphs that are not covered by Remark~\ref{rem:srgOA} have (generalised Denniston) parameters $(v,k,\lambda,\mu)$, where
    \begin{align*}
        v&=q^{2(2l+1)}; \\
        k &= \frac{(q^{2l}-1)(q^{2(l+1)}-1)}{q+1}+q^{2l}-1; \\
        \lambda &= q^{2l}-2+\frac{q^{2(l+1)}-1}{q+1}\left (\frac{q^{2l}-1}{q+1}-1\right); \\
        \mu &= \frac{q^{2l}-1}{q+1}\left (\frac{q^{2(l+1)}-1}{q+1}+1\right),
    \end{align*}
    for some prime power $q$ and positive integer $l$.
    This family of strongly regular graphs contains as a subfamily the strongly regular graphs discovered earlier by Fern\' andez-Alcober et al.~\cite[Proposition 4.3]{FERNANDEZ2010} and Momihara~\cite[Proposition 2]{MOMIHARA}. 
    This family also forms part of a more general family described by Li et al.~\cite[Theorem 1.10]{li2025} in their recent preprint.
\end{remark}

All known instances of strongly regular $\Gamma_m(\alpha_1,\alpha_2)$ occur when the corresponding prime powers $q_1$ and $q_2$ are powers of the same prime.

\begin{question}
\label{q:coprime}
    Do there exist coprime prime powers $q_1 \equiv 1 \pmod m$ and $q_2 \equiv 1 \pmod m$ such that $\Gamma_m(\alpha_1,\alpha_2)$ is strongly regular for some choice of primitive elements $\alpha_1$ and $\alpha_2$ of $\operatorname{GF}(q_1)$ and $\operatorname{GF}(q_2)$, respectively? 
\end{question}

\section{Coherent rank 6 and beyond}
\label{sec:6beyond}

\subsection{More strongly regular graphs}

In view of Corollary~\ref{cor:coherentRanklem}, one might expect that for an appropriately chosen pair of prime powers and primitive elements $\alpha_1$ and $\alpha_2$, the graph $\Gamma_2(\alpha_1,\alpha_2)$ is a Neumaier graph with coherent rank $5$.
However, this is not the case, as we now briefly demonstrate.

\begin{theorem}[{\cite[Lemma 6]{Storer1967},\cite[Proposition 9]{Myerson81}}]
\label{thm:ord2cyc}
    Let $q$ be a prime power with $q=2n +1$ and $\alpha$ be a primitive element for $\operatorname{GF}(q)$. 
    Let $\mathfrak r_n$ be the remainder of $n$ after division by $2$. 
    Then
    \begin{align*}
        c_2(\alpha; 0,\mathfrak r_n) &= \frac{n-2+3\mathfrak r_n}{2}; \\
         c_2(\alpha; 0,1-\mathfrak r_n) = c_2(\alpha; 1,0)= c_2(\alpha; 1,1) &= \frac{n-\mathfrak r_n}{2}.
         \end{align*}
\end{theorem}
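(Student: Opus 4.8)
The plan is to compute the cyclotomic numbers of order $2$ directly from their definition, using the known additive structure of $\operatorname{GF}(q)$. Since $m=2$, the two cyclotomic classes are the nonzero squares $\mathsf{Sq} = \{\alpha^k : k \text{ even}\}$ and the nonsquares $\mathsf{NSq} = \{\alpha^k : k \text{ odd}\}$. The quantity $c_2(\alpha; a, b)$ counts representations $\alpha^k + 1 = \alpha^{k'}$ with $k \equiv a$ and $k' \equiv b \pmod 2$; equivalently, it counts elements $x$ in class $b$ such that $x - 1$ lies in class $a$. So all four numbers are governed by the distribution of consecutive pairs $(x-1, x)$ across the two classes. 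The first thing I would do is invoke the two basic constraints from Theorem~\ref{thm:cycBasic}: the row sums (ii) and column sums (iii) of the $2 \times 2$ array $(c_2(\alpha; a, b))_{a,b}$ are pinned down to be $n-1$ or $n$ depending on parity conditions. This reduces the number of genuinely free parameters to one.

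\textbf{Carrying out the reduction.} With $m=2$, identities (ii) and (iii) of Theorem~\ref{thm:cycBasic} give four linear relations among the four numbers $c_2(\alpha;0,0), c_2(\alpha;0,1), c_2(\alpha;1,0), c_2(\alpha;1,1)$. The parity of $n$ controls which entry absorbs the $-1$ (the ``$n-1$'' slot), and this is exactly where $\mathfrak{r}_n$ enters: when $n$ is even, $-1 = \alpha^{(q-1)/2}$ is a square, and when $n$ is odd, $-1$ is a nonsquare, which shifts which diagonal relations are symmetric versus antisymmetric (compare part (i) of Theorem~\ref{thm:cycBasic}). Solving the resulting linear system leaves a single unknown, which I expect to be $c_2(\alpha; 0, \mathfrak{r}_n)$. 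To pin it down I would use symmetry relation (i): when $qn$ is even we have $c_2(\alpha;a,b) = c_2(\alpha;b,a)$, forcing $c_2(\alpha;0,1) = c_2(\alpha;1,0)$, and together with the row/column sums this collapses the system so that three of the four numbers coincide, as claimed.

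\textbf{The remaining computation and the main obstacle.} The one fact not supplied by the elementary sum identities is a single explicit evaluation to fix the last degree of freedom. The classical route is to count $x \in \operatorname{GF}(q)^\times$ with both $x$ and $x+1$ nonzero squares (equivalently, to count solutions of $y^2 - z^2 = 1$ modulo the identification by $\pm$), which reduces to evaluating a simple character sum $\sum_{x} \left(1 + \chi(x)\right)\left(1 + \chi(x+1)\right)$ where $\chi$ is the quadratic character; the main term is $q$ and the cross term $\sum_x \chi(x)\chi(x+1) = -1$ by the standard Jacobi-sum (or Jacobsthal) evaluation for the quadratic character. This is the step where the real arithmetic happens, but it is entirely classical and is precisely what is attributed to the cited sources. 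The main obstacle is purely bookkeeping: tracking the parity of $n$ through the relations of Theorem~\ref{thm:cycBasic} so that the formulas come out in the uniform closed form involving $\mathfrak{r}_n$. Once the single character-sum value and the parity of $-1$ are in hand, substituting back into the solved linear system yields $c_2(\alpha; 0, \mathfrak{r}_n) = (n - 2 + 3\mathfrak{r}_n)/2$ and the common value $(n - \mathfrak{r}_n)/2$ for the other three, completing the proof.
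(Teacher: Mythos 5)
Your proposal is correct, but note that the paper itself gives no proof of this statement at all: it is quoted directly from the literature (Storer's Lemma~6 and Myerson's Proposition~9), so there is no internal argument to compare against. Your derivation checks out: with $m=2$ and $q=2n+1$ odd, parts (ii) and (iii) of Theorem~\ref{thm:cycBasic} give the row and column sums $n$ or $n-1$ according to the parity of $n$, part (i) gives $c_2(\alpha;0,1)=c_2(\alpha;1,0)$ when $n$ is even and $c_2(\alpha;0,0)=c_2(\alpha;1,1)$ when $n$ is odd, and this leaves exactly one degree of freedom; the count of $x$ with $x$ and $x+1$ both nonzero squares, namely $\tfrac14\bigl(q-4-\chi(-1)\bigr)$ via $\sum_x \chi(x)\chi(x+1)=-1$, evaluates $c_2(\alpha;0,0)$ (not $c_2(\alpha;0,\mathfrak r_n)$ as you wrote, but any one entry suffices) and back-substitution yields the stated formulas in both parity cases. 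It is worth knowing that the classical route taken in the cited sources avoids the character sum entirely: besides the relations in Theorem~\ref{thm:cycBasic}, cyclotomic numbers satisfy the further elementary identity $c_m(\alpha;a,b)=c_m(\alpha;-a,b-a)$ (obtained by dividing the defining equation $x+1=y$ by $x$), which for $m=2$ forces $c_2(\alpha;1,0)=c_2(\alpha;1,1)$; combined with (i)--(iii) this pins down all four numbers by linear algebra alone. So your approach trades that extra symmetry (which the paper never states) for the standard Jacobi-sum evaluation---a perfectly valid exchange, just slightly less elementary than the classical argument.
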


Now we are ready to dispose of the special case of Theorem~\ref{thm:main} when $m=2$.

\begin{corollary}
\label{cor:rank3}
Let $q_1$ and $q_2$ be prime powers, each congruent to $1 \pmod {2}$ and let $\alpha_1$ and $\alpha_2$ be primitive elements of $\operatorname{GF}(q_1)$ and $\operatorname{GF}(q_2)$, respectively.
    Then $\Gamma_2(\alpha_1,\alpha_2)$ is a Neumaier graph if and only if $ q_1 = q_2$.
\end{corollary}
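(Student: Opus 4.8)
The plan is to apply Theorem~\ref{thm:main} in the case $m=2$ and reduce both of its conditions to the single requirement $q_1 = q_2$. By Theorem~\ref{thm:main}, the graph $\Gamma_2(\alpha_1,\alpha_2)$ is a Neumaier graph precisely when $q_1 n_1 \equiv q_2 n_2 \pmod 2$ and $\mathcal X^{(2)}_{0,0,0}(\alpha_1,\alpha_2) = q_1 + n_1 n_2 - 2n_1 - n_2$. Since $q_1$ and $q_2$ are odd, the first (undirectedness) condition simplifies at once to $n_1 \equiv n_2 \pmod 2$, so it remains to analyse the edge-regularity equation under this parity hypothesis.

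Next I would expand the left-hand side directly from the definition. For $m=2$ and $i=j=k=0$ the double sum collapses to $\mathcal X^{(2)}_{0,0,0}(\alpha_1,\alpha_2) = \sum_{a=0}^{1}\sum_{b=0}^{1} c_2(\alpha_1;a,b)\,c_2(\alpha_2;a,b)$, a sum of four products of order-$2$ cyclotomic numbers. I would then substitute the closed forms of Theorem~\ref{thm:ord2cyc}, splitting according to the parity of $n_1$ and $n_2$. Under the standing hypothesis $n_1 \equiv n_2 \pmod 2$ there are two cases, namely both $n_i$ even and both $n_i$ odd. In the even case three of the four cyclotomic numbers for each field equal $n_i/2$ while one equals $(n_i-2)/2$; in the odd case three equal $(n_i-1)/2$ while one equals $(n_i+1)/2$. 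The pleasant surprise, and the step that must be verified most carefully, is that both cases yield the same value $\mathcal X^{(2)}_{0,0,0}(\alpha_1,\alpha_2) = n_1 n_2 - \tfrac{1}{2}(n_1+n_2)+1$; note also that this expression is manifestly symmetric in $\alpha_1,\alpha_2$ and independent of the chosen primitive elements, as it must be at order $2$.

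Finally I would substitute $q_1 = 1+2n_1$ into the right-hand side of the edge-regularity equation, turning it into $n_1 n_2 - \tfrac{1}{2}(n_1+n_2)+1 = 1 + n_1 n_2 - n_2$, which simplifies to $n_1 = n_2$. Since $q_i = 1 + 2 n_i$, this is equivalent to $q_1 = q_2$, and $n_1 = n_2$ trivially satisfies the parity condition $n_1 \equiv n_2 \pmod 2$. Conversely, if $q_1 = q_2$ then $n_1 = n_2$ and both conditions of Theorem~\ref{thm:main} hold, so $\Gamma_2(\alpha_1,\alpha_2)$ is a Neumaier graph. This establishes the claimed equivalence. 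The only genuine work is the two-case cyclotomic computation of the previous paragraph, and since both parities give the identical polynomial in $n_1,n_2$, no further obstacle arises.
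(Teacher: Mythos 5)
Your proof is correct and follows essentially the same route as the paper: both apply Theorem~\ref{thm:main} with $m=2$ and substitute the order-$2$ cyclotomic numbers from Theorem~\ref{thm:ord2cyc} to reduce \eqref{eqn:edgeregular} to $n_1=n_2$, i.e., $q_1=q_2$. The only cosmetic difference is that the paper settles the converse by citing Remark~\ref{rem:srgOA} (the case $q_1=q_2$ yields a strongly regular, hence Neumaier, graph), whereas you verify both conditions of Theorem~\ref{thm:main} directly; your explicit two-parity computation of $\mathcal X^{(2)}_{0,0,0}(\alpha_1,\alpha_2)=n_1n_2-\tfrac{1}{2}(n_1+n_2)+1$ is exactly the calculation the paper's terse proof leaves implicit.
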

\begin{proof}
    By Theorem~\ref{thm:ord2cyc}, we have $\mathcal X^{(2)}_{0,0,0}(\alpha_1,\alpha_2) = q_1+n_1n_2-2n_1-n_2$, which when combined with \eqref{eqn:edgeregular} yields $q_1 = q_2$.
    The converse follows from Remark~\ref{rem:srgOA}.
\end{proof}

The graphs resulting from Corollary~\ref{cor:rank3} are a subset of those of Remark~\ref{rem:srgOA}, which are all strongly regular.

\subsection{Coherent rank 6}
\label{sec:rank6C}

In this section, we present a new family of Neumaier graphs that have coherent rank 6 (see Corollary~\ref{cor:rank6}, below).
This family occurs as a special case of Theorem~\ref{thm:main} for $m=3$.

Let $q = p^r$ be a prime power with $q \equiv 1 \pmod 3$ and $\alpha$ be a primitive element for $\operatorname{GF}(q)$.
Define $u_3(q), v_3(q) \in \mathbb Z$ by $4q = u_3(q)^2+27v_3(q)^2$ and $u_3(q) \equiv 1 \pmod 3$, where $\operatorname{gcd}(u_3(q),p)  = 1$ if $p \equiv 1 \pmod 3$.
Note that $u_3(q)$ is uniquely determined while $v_3(q)$ is determined only up to sign~\cite[Proposition 10]{Myerson81}.
The sign for $v_3(q)$ can be chosen to agree with the choice of primitive element $\alpha$ in Theorem~\ref{thm:ord3cyc}, below, with respect to the value of cyclotomic numbers of order $3$.
We write $v_3(\alpha)$ for such a choice of $v_3(q)$.

\begin{theorem}[{\cite[Lemma 7]{Storer1967}, \cite[Proposition 10]{Myerson81}}]
\label{thm:ord3cyc}
    Let $q$ be a prime power with $q \equiv 1 \pmod 3$ and $\alpha$ be a primitive element for $\operatorname{GF}(q)$.
    \begin{align*}
        c_3(\alpha; 0,0) &= \frac{q-8+u_3(q)}{9}; \\
        c_3(\alpha; 0,1) = c_3(\alpha; 1,0) = c_3(\alpha; 2,2) &= \frac{2q-4-u_3(q)-9v_3(\alpha)}{18}; \\
        c_3(\alpha; 0,2) = c_3(\alpha; 2,0) = c_3(\alpha;1,1) &= \frac{2q-4-u_3(q)+9v_3(\alpha)}{18}; \\
        c_3(\alpha;1,2) = c_3(\alpha; 2,1) &= \frac{q+1+u_3(q)}{9}.
    \end{align*}
\end{theorem}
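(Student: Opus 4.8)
The plan is to evaluate the order-$3$ cyclotomic numbers by reducing them to Jacobi sums over $\operatorname{GF}(q)$. Fix a cubic multiplicative character $\chi$ of $\operatorname{GF}(q)^{*}$ normalised so that $\chi(\alpha)=\omega:=e^{2\pi i/3}$; this single choice encodes the dependence on the primitive element $\alpha$. First I would convert each cyclotomic number into a character sum. Writing $\operatorname{ind}(y)$ for the discrete logarithm base $\alpha$, the indicator that $\operatorname{ind}(y)\equiv a\pmod 3$ equals $\tfrac13\sum_{s=0}^{2}\omega^{-sa}\chi^{s}(y)$, so
\[
c_3(\alpha;a,b)=\frac19\sum_{s=0}^{2}\sum_{t=0}^{2}\omega^{-sa-tb}\sum_{y}\chi^{s}(y)\chi^{t}(y+1).
\]
After the substitution $y\mapsto -y$ and using $\chi(-1)=1$ (which holds since $\chi$ has order $3$, forcing $\chi(-1)^2=1$), the inner sum equals the Jacobi sum $J(\chi^{s},\chi^{t})$ when $s,t\neq 0$, equals $q-2$ when $s=t=0$, and equals $-1$ when exactly one of $s,t$ vanishes.

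Next I would assemble the Jacobi sums that actually occur. Because $\chi^{2}=\bar\chi$, the off-diagonal sums collapse to $J(\chi,\chi^{2})=J(\chi^{2},\chi)=-\chi(-1)=-1$, while $J(\chi^{2},\chi^{2})=\overline{J(\chi,\chi)}$, so everything is controlled by the single sum $J:=J(\chi,\chi)$. The arithmetic facts I need about $J$ are classical: $J\in\mathbb Z[\omega]$, $|J|^{2}=q$, and $J\equiv -1\pmod 3$. Writing $J=a_0+b_0\omega$ with $a_0,b_0\in\mathbb Z$, the congruence forces $3\mid b_0$ and $a_0\equiv -1\pmod 3$; putting $u_3=2a_0-b_0$ and $v_3=\pm b_0/3$ then yields $4q=u_3^{2}+27v_3^{2}$ with $u_3\equiv 1\pmod 3$, the sign of $v_3$ being the conventional one tied to the choice of $\chi$ (hence of $\alpha$). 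Equivalently $J=\tfrac12\bigl(u_3\mp 3v_3\sqrt{-3}\bigr)$, and in particular $J+\bar J=u_3$.

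With these ingredients I would substitute into the character-sum formula for the four representatives $(0,0)$, $(1,2)$, $(0,1)$, $(0,2)$. For $(0,0)$ the Jacobi contribution is $J+\bar J+2J(\chi,\chi^{2})=u_3-2$, giving $c_3(\alpha;0,0)=(q-8+u_3)/9$; for $(1,2)$ the weights $\omega^{-s-2t}$ recombine to $J+\bar J-\omega-\omega^{2}=u_3+1$, giving $(q+1+u_3)/9$. For $(0,1)$ and $(0,2)$ the relevant combinations $\omega^{2}J+\omega\bar J$ and $\omega J+\omega^{2}\bar J$ evaluate to $\tfrac12(-u_3+9M)$ and $\tfrac12(-u_3-9M)$ with $M=b_0/3$, producing the two values $(2q-4-u_3\mp 9v_3)/18$ once the sign convention relating $v_3$ to $M$ is fixed. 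Finally I would record the coincidences within each class, namely $c_3(\alpha;0,1)=c_3(\alpha;1,0)=c_3(\alpha;2,2)$, $c_3(\alpha;0,2)=c_3(\alpha;2,0)=c_3(\alpha;1,1)$, and $c_3(\alpha;1,2)=c_3(\alpha;2,1)$; these follow either from the standard symmetry relations $(i,j)=(j,i)$ (valid here as $\chi(-1)=1$) and $(i,j)=(3-i,\,j-i)$, or simply by running the same character-sum evaluation for all nine pairs. As a consistency check, the linear identities $c_3(\alpha;0,0)+c_3(\alpha;0,1)+c_3(\alpha;0,2)=(q-4)/3$ drop out of the resulting expressions, matching Theorem~\ref{thm:cycBasic}.

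The main obstacle is pinning down the normalisation of $(u_3,v_3)$ rather than the bookkeeping. The magnitude $|J|^{2}=q$ alone only gives $4q=u_3^{2}+27v_3^{2}$; it is the congruence $J\equiv -1\pmod 3$ that singles out $u_3\equiv 1\pmod 3$ and forces $3\mid b_0$, and it is the fact that $\operatorname{Im}J$ reverses sign when $\chi$ is replaced by $\bar\chi$ that makes $v_3$ well defined only up to a sign determined by $\alpha$. Establishing $J\equiv -1\pmod 3$ is the one genuinely number-theoretic step: one works modulo the prime $1-\omega$ above $3$ in $\mathbb Z[\omega]$, where $\chi^{s}(y)\equiv 1$ for $y\neq 0$, and refines this to a congruence modulo $3$. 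Everything else is careful arithmetic with the cube roots of unity $\omega,\omega^{2}$.
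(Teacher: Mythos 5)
The paper does not prove this theorem at all; it cites it from Storer and Myerson, so there is no internal proof to compare against. Your Jacobi-sum derivation is the standard route (and essentially the one used in the cited sources), and the computational core is correct: the orthogonality expansion of $c_3(\alpha;a,b)$, the reduction of the inner sums to $J(\chi^s,\chi^t)$ via $y\mapsto -y$ and $\chi(-1)=1$, the identities $J(\chi,\chi^2)=J(\chi^2,\chi)=-1$ and $J(\chi^2,\chi^2)=\overline{J(\chi,\chi)}$, the four weighted combinations (I checked that $(0,0)$ yields $(q-8+J+\bar J)/9$, that $(1,2)$ yields $(q+1+J+\bar J)/9$, and that $\omega^2J+\omega\bar J$ and $\omega J+\omega^2\bar J$ produce the two middle values), the symmetry relations $c(a,b)=c(b,a)$ and $c(a,b)=c(-a,b-a)$ giving the stated coincidences, and the consistency check against Theorem~\ref{thm:cycBasic} all hold. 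Your sketch of $J\equiv -1\pmod 3$ (work modulo $\lambda=1-\omega$, then refine modulo $\lambda^2\sim 3$) is also sound and works for all prime powers, not just primes.

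There is, however, a genuine gap in the normalisation step, and it is not the sign of $v_3$ (which the paper deliberately leaves to be fixed by this very theorem) but the value of $u_3(q)$. The paper's definition of $u_3(q)$ includes the condition $\gcd(u_3(q),p)=1$ when $p\equiv 1\pmod 3$, and this condition is exactly what makes $u_3(q)$ unique when $q$ is a proper prime power. The three classical facts you invoke ($J\in\mathbb Z[\omega]$, $|J|^2=q$, $J\equiv -1\pmod 3$) do not rule out the non-coprime solution. Concretely, for $q=49$ one has $4q=13^2+27\cdot 1^2=(-14)^2+27\cdot 0^2$ with both $13\equiv -14\equiv 1\pmod 3$; the element $J=-7$ satisfies all three of your facts yet would give $c_3(\alpha;0,0)=3$, whereas the true value is $6$ (corresponding to $u_3(49)=13$). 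So your argument proves the formulas with $u=J+\bar J$, and to obtain the theorem as stated you must additionally show $p\nmid J+\bar J$, equivalently that the ideal $(J)$ is a power of a single prime of $\mathbb Z[\omega]$ above $p$ rather than being divisible by both primes above $p$. For $q=p$ prime this is automatic, since $J$ then has prime norm; for $q=p^f$ with $f\geqslant 2$ and $p\equiv 1\pmod 3$ you need an extra ingredient, e.g.\ the Hasse--Davenport relation (every nontrivial cubic character of $\operatorname{GF}(q)^*$ factors through the norm to $\operatorname{GF}(p)$, so $J_{\operatorname{GF}(q)}=(-1)^{f+1}J_{\operatorname{GF}(p)}^f$ is, up to sign, a power of a prime element) or Stickelberger's theorem on the prime factorisation of Jacobi sums. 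With that single addition, your proof is complete.
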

Define 
\begin{align*}
    \Phi_3(\alpha_1,\alpha_2) &:= \frac{u_3(q_1)u_3(q_2)+27v_3(\alpha_1)v_3(\alpha_2)}{4}; \\
    \Psi_3(\alpha_1,\alpha_2) &:= u_3(q_1)v_3(\alpha_2) - u_3(q_2)v_3(\alpha_1).
\end{align*}
Let $q_1 = 1+3n_1$ and $q_2 = 1+3n_2$ be prime powers.
Let $\alpha_1$ and $\alpha_2$ be primitive elements of $\operatorname{GF}(q_1)$ and $\operatorname{GF}(q_2)$, respectively.
By Theorem~\ref{thm:ord3cyc}, we have

\begin{align}
\label{eqn:particular3}
    \mathcal X^{(3)}_{0,0,0}(\alpha_1,\alpha_2) &= \frac{(q_1-2)(q_2-2)}{9}+\frac{2\Phi_3(\alpha_1,\alpha_2)}{9} + \frac{2}{3}; \\
    \label{eqn:particular31}
    \mathcal X^{(3)}_{0,0,1}(\alpha_1,\alpha_2) &= \frac{(q_1-2)(q_2-2)}{9}-\frac{\Phi_3(\alpha_1,\alpha_2)}{9} + \frac{\Psi_3(\alpha_1,\alpha_2)}{4}; \\
    \label{eqn:particular32}
    \mathcal X^{(3)}_{0,0,2}(\alpha_1,\alpha_2) &= \frac{(q_1-2)(q_2-2)}{9}-\frac{\Phi_3(\alpha_1,\alpha_2)}{9} -\frac{\Psi_3(\alpha_1,\alpha_2)}{4}.
\end{align}

Now we can prove the main result of this section, which provides a simple equation that can be used to check if $\Gamma_3(\alpha_1,\alpha_2)$ is a Neumaier graph.

\begin{corollary}
\label{cor:rank6}
Let $q_1$ and $q_2$ be prime powers, each congruent to $1 \pmod {3}$ and let $\alpha_1$ and $\alpha_2$ be primitive elements of $\operatorname{GF}(q_1)$ and $\operatorname{GF}(q_2)$, respectively.
    Then $\Gamma=\Gamma_3(\alpha_1,\alpha_2)$ is a Neumaier graph if and only if
        \begin{equation}
    \label{eqn:necCond}
        4(2q_1-q_2) = u_3(q_1)u_3(q_2)+ 27v_3(\alpha_1)v_3(\alpha_2).
    \end{equation}
    Furthermore, if $\operatorname{gcd}(q_1,q_2) = 1$ then $\Gamma$ has coherent rank 6.
\end{corollary}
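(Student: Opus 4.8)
The plan is to treat the two assertions of Corollary~\ref{cor:rank6} separately. For the equivalence, I would first observe that the parity hypothesis of Theorem~\ref{thm:main} is automatic when $m=3$: writing $q_i=1+3n_i$ gives $q_in_i\equiv n_i(n_i+1)\equiv 0\pmod 2$, so $q_1n_1\equiv q_2n_2\pmod 2$ always holds, and $\Gamma_3(\alpha_1,\alpha_2)$ is a Neumaier graph if and only if the edge-regularity condition \eqref{eqn:edgeregular} holds. I would then substitute the closed form \eqref{eqn:particular3} for $\mathcal X^{(3)}_{0,0,0}(\alpha_1,\alpha_2)$ into \eqref{eqn:edgeregular}, clear denominators, and eliminate $n_1,n_2$ via $n_i=(q_i-1)/3$. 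A routine simplification collapses \eqref{eqn:edgeregular} to the single relation $\Phi_3(\alpha_1,\alpha_2)=2q_1-q_2$, which, after multiplying by $4$ and unfolding the definition of $\Phi_3$, is exactly \eqref{eqn:necCond}. This part is purely computational and presents no obstacle.

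For the rank statement, the bound $\operatorname{rk}\mathcal W(\Gamma)\leqslant 6$ is immediate from Corollary~\ref{cor:coherentRanklem}, so the task is to prove $\operatorname{rk}\mathcal W(\Gamma)\geqslant 6$. By Lemma~\ref{lem:schurCC} this amounts to showing that the minimal Schur ring $\mathcal S$ containing $\underline S$, where $S=\mathsf C_1(\alpha_1)\cup\mathsf D_0(\alpha_1,\alpha_2)$, contains each of the six basis elements $\underline{\{(0,0)\}},\underline{\mathsf C_1},\underline{\mathsf C_2},\underline{\mathsf D_0},\underline{\mathsf D_1},\underline{\mathsf D_2}$ of the Schur partition of Lemma~\ref{lem:Spart-rankm+3}. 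I would extract these by repeated application of the Schur--Wielandt principle (Lemma~\ref{lem:Wielandt}), noting throughout that $\underline{\{(0,0)\}}$ and $\underline S$ already lie in $\mathcal S$ and may be subtracted off whenever a coefficient collides with the degree or the edge-regularity value.

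The extraction proceeds in two stages. First, expanding $\underline S^2$ via \eqref{eqn:641}, \eqref{eqn:CD}, and \eqref{eqn:DD1} and inserting \eqref{eqn:particular3}--\eqref{eqn:particular32}, one finds (after using the Neumaier relation from the first part) that the coefficients of $\mathsf C_2$, $\mathsf D_1$, $\mathsf D_2$ in $\underline S^2$ are $n_1(n_2+1)$ and $n_1(n_2+1)\pm\tfrac14\Psi_3(\alpha_1,\alpha_2)$, while $\mathsf C_1$ and $\mathsf D_0$ both carry the edge-regularity coefficient. When $\Psi_3(\alpha_1,\alpha_2)\neq 0$ these three values form a nonconstant arithmetic progression and are therefore pairwise distinct, so Lemma~\ref{lem:Wielandt} applied to $\underline S^2$ recovers $\underline{\mathsf C_2}$, $\underline{\mathsf D_1}$, $\underline{\mathsf D_2}$ individually. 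Second, I compute $\underline{\mathsf D_1}^2$ (and, if needed, $\underline{\mathsf D_2}^2$) via \eqref{eqn:DD1}; here $\mathsf C_1$ and $\mathsf C_2$ acquire the coefficients $n_2(n_1-1)$ and $n_1(n_2-1)$, which differ because $q_1\neq q_2$. Using Proposition~\ref{pro:Xeq} to identify the $\mathsf D_k$-coefficients with $\mathcal X^{(3)}_{0,0,\ast}(\alpha_1,\alpha_2)$, a short check shows that the only collision that could prevent isolating $\underline{\mathsf C_1}$ is with $\underline{\mathsf D_0}$, and this occurs for $\underline{\mathsf D_1}^2$ precisely when $\tfrac14\Psi_3=\tfrac13(q_2-q_1)$ and for $\underline{\mathsf D_2}^2$ precisely when $\tfrac14\Psi_3=\tfrac13(q_1-q_2)$; these cannot both hold since $q_1\neq q_2$. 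Hence at least one of the two squares yields $\underline{\mathsf C_1}$, whence $\underline{\mathsf D_0}=\underline S-\underline{\mathsf C_1}\in\mathcal S$, and all six basis elements lie in $\mathcal S$.

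The crux, and the only genuinely nonroutine step, is the number-theoretic fact that $\Psi_3(\alpha_1,\alpha_2)\neq 0$ under the standing hypotheses. I would prove it by passing to Eisenstein-type integers: set $z_i=u_3(q_i)+3v_3(\alpha_i)\sqrt{-3}$, so that $z_i\overline{z_i}=u_3(q_i)^2+27v_3(\alpha_i)^2=4q_i$ and $z_1\overline{z_2}=4\Phi_3(\alpha_1,\alpha_2)-3\sqrt{-3}\,\Psi_3(\alpha_1,\alpha_2)$. If $\Psi_3=0$ then $z_1\overline{z_2}$ is a rational integer, so taking norms gives $16\Phi_3^2=|z_1|^2|z_2|^2=16q_1q_2$, that is $\Phi_3^2=q_1q_2$. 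Combined with the Neumaier relation $\Phi_3=2q_1-q_2$ this yields $(2q_1-q_2)^2=q_1q_2$, i.e.\ $(q_1-q_2)(4q_1-q_2)=0$; thus $q_2=q_1$ or $q_2=4q_1$, each contradicting $\gcd(q_1,q_2)=1$ for prime powers exceeding $1$. Therefore $\Psi_3(\alpha_1,\alpha_2)\neq 0$, and the rank-$6$ conclusion follows. I expect the bulk of the remaining effort to be organising the Schur--Wielandt bookkeeping of the previous paragraph cleanly, but the essential content is this norm computation.
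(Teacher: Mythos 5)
Your proof is correct, and it earns its keep: the overall skeleton matches the paper's (the equivalence follows from Theorem~\ref{thm:main} together with \eqref{eqn:particular3}--\eqref{eqn:particular32}, and the rank bound from Corollary~\ref{cor:coherentRanklem}, Lemma~\ref{lem:schurCC}, and repeated use of Lemma~\ref{lem:Wielandt}), but both substantive steps are handled differently. On the bookkeeping side you are more economical: you note that in $\underline S^2$ the coefficients of $\mathsf C_2(\alpha_2)$, $\mathsf D_1(\alpha_1,\alpha_2)$, $\mathsf D_2(\alpha_1,\alpha_2)$ are $n_1(n_2+1)$ and $n_1(n_2+1)\pm\tfrac14\Psi_3(\alpha_1,\alpha_2)$, so a single application of the Schur--Wielandt principle recovers all three sets at once, and then one collision analysis on $\underline{\mathsf D_1(\alpha_1,\alpha_2)}^2$ versus $\underline{\mathsf D_2(\alpha_1,\alpha_2)}^2$ isolates $\underline{\mathsf C_1(\alpha_1)}$; the paper instead extracts only one $\underline{\mathsf D_i(\alpha_1,\alpha_2)}$ from $\underline S^2$ and needs the additional product $\underline{\mathsf D_i(\alpha_1,\alpha_2)}\,\underline S$ as well as $\underline{\mathsf D_0(\alpha_1,\alpha_2)}^2$ to finish. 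More importantly, your treatment of the crux, $\Psi_3(\alpha_1,\alpha_2)\neq 0$ (equivalently $\mathcal X^{(3)}_{0,0,1}\neq\mathcal X^{(3)}_{0,0,2}$), is genuinely different. The paper assumes $u_3(q_1)v_3(\alpha_2)=u_3(q_2)v_3(\alpha_1)$ and argues by cases: for $v_3(\alpha_2)\neq 0$ a divisibility argument using $\gcd(q_1,q_2)=1$, and for $v_3(\alpha_2)=0$ the assertion that both $q_1$ and $q_2$ must then be powers of $2$. That assertion is not accurate: $v_3(q)=0$ also occurs for even powers of primes congruent to $2\pmod 3$ (for instance $4\cdot 25=10^2+27\cdot 0^2$ gives $v_3(25)=0$), so coprime pairs with both $v_3$ vanishing exist and the paper's case analysis does not close without invoking \eqref{eqn:necCond}. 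Your norm computation in $\mathbb Z[\sqrt{-3}]$ --- if $\Psi_3=0$ then $\Phi_3(\alpha_1,\alpha_2)^2=q_1q_2$, which combined with the Neumaier relation $\Phi_3(\alpha_1,\alpha_2)=2q_1-q_2$ yields $(q_1-q_2)(4q_1-q_2)=0$, contradicting coprimality --- treats both cases uniformly and closes exactly this gap. The only cost is that your non-degeneracy argument requires the Neumaier hypothesis, but the ``furthermore'' clause is intended (and proved in the paper) under that hypothesis anyway, so nothing is lost; on balance your route is tighter and repairs a genuine flaw in the published case analysis.
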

\begin{proof}
    Equation~\eqref{eqn:necCond} follows from Theorem~\ref{thm:main} and \eqref{eqn:particular3}.
    By Corollary~\ref{cor:coherentRanklem}, the coherent rank of $\Gamma_m(\alpha_1,\alpha_2)$ is at most $6$.
Now suppose that $\operatorname{gcd}(q_1,q_2) = 1$.
Let $\mathcal S$ be the minimal Schur ring over $\operatorname{GF}(q_1) \times \operatorname{GF}(q_2)$ for which $\underline{\mathsf C_1(\alpha_1)} + \underline{\mathsf D_{0}(\alpha_1,\alpha_2 )} \in \mathcal S$.
It remains to show that $\operatorname{rk} (\mathcal S) \geqslant 6$.
     Let $S = \mathsf C_1(\alpha_1) \cup \mathsf D_{0}(\alpha_1,\alpha_2 )$.
    By Lemma~\ref{lem:Spart-rankm+3}, we have
    \begin{align}
    \label{eqn:S2}
        \underline{S}^2 = & \ k\underline{\{(0,0)\}} + \lambda_1 \underline{\mathsf C_1(\alpha_1)} + \lambda_2\underline{\mathsf D_0(\alpha_1,\alpha_2)} + n_1(n_2+1)\underline{\mathsf C_2(\alpha_2)} + \sum_{i=1}^{2} \mu_i \underline{\mathsf D_i(\alpha_1,\alpha_2)},
    \end{align}
    where $k = (q_1-1)(n_2+1)$, $\lambda_1 = q_1-2+(n_1-1)n_2$, $\lambda_2=2n_1 + \mathcal X_{0,0,0}^{(m)}(\alpha_1,\alpha_2)$, and $\mu_i = 2n_1 + \mathcal X_{0,0,i}^{(m)}(\alpha_1,\alpha_2)$ for each $i \in \{1,2\}$.
    Suppose (for a contradiction) that
\[
\mathcal X^{(3)}_{0,0,1}(\alpha_1,\alpha_2) = \mathcal X^{(3)}_{0,0,2}(\alpha_1,\alpha_2),
\]
    from which, together with \eqref{eqn:particular31} and \eqref{eqn:particular32}, it follows that $u_3(q_1)v_3(\alpha_2) = u_3(q_2)v_3(\alpha_1)$.

    In the case where $v_3(\alpha_2) = 0$, we must also have $v_3(\alpha_1) = 0$, since $u_3(q_2) \equiv 1 \pmod 3$.
    It follows that both $q_1$ and $q_2$ are powers of $2$.
    Otherwise, suppose $v_3(\alpha_2) \ne 0$.
    Then $u_3(q_1) = u_3(q_2)v_3(\alpha_1)/v_3(\alpha_2)$.
    It follows that $q_1v_3(\alpha_2)^2 = v_3(\alpha_1)^2q_2$.
    Now if $q_1$ does not divide $q_2$ then $q_1$ must divide $v_3(\alpha_1)^2$, i.e., $v_3(\alpha_1)^2 = sq_1$ for some positive integer $s$.
    But then $4q_1 = u_3(q_1)^2 + 27sq_1$, which contradicts the fact that $u_3(q_1)^2 > 0$.
    Thus, $\mathcal X^{(3)}_{0,0,1}(\alpha_1,\alpha_2) \ne \mathcal X^{(3)}_{0,0,2}(\alpha_1,\alpha_2)$.
    Applying Lemma~\ref{lem:Wielandt} to \eqref{eqn:S2}, we can deduce that $\underline{\mathsf D_i(\alpha_1,\alpha_2)} \in \mathcal S$ for some $i \in \{1,2\}$.
    To ease the notation of this proof, we shall reduce subscripts modulo 3.

    By Lemma~\ref{lem:Spart-rankm+3}, we have
        \begin{align}
        \label{eqn:di2}
            \begin{split}
                \underline{\mathsf D_i(\alpha_1,\alpha_2)}^2 =& \ 3n_1n_2 \underline{\{(0,0\}} + (n_1-1)n_2\underline{\mathsf C_1(\alpha_1)}+n_1(n_2-1)\underline{\mathsf C_2(\alpha_2)}\\
                & + \sum_{j=0}^{2}\mathcal X^{(3)}_{i,i,j}(\alpha_1,\alpha_2)\underline{\mathsf D_j(\alpha_1,\alpha_2)} 
            \end{split} \\
            \begin{split}
            \label{eqn:diS}
                \underline{\mathsf D_i(\alpha_1,\alpha_2)}\underline{S} =& 
                \left ( n_1+\mathcal X^{(3)}_{0,i,0}(\alpha_1,\alpha_2) \right )\underline{\mathsf D_0(\alpha_1,\alpha_2)} + \left ( n_1+\mathcal X^{(3)}_{0,i,-i}(\alpha_1,\alpha_2) \right )\underline{\mathsf D_{-i}(\alpha_1,\alpha_2)} \\
                &+ n_1n_2 \underline{C_1} + n_1(n_2+1) \underline{C_2} + \left (n_1-1+\mathcal X^{(3)}_{0,i,i}(\alpha_1,\alpha_2) \right )\underline{\mathsf D_i(\alpha_1,\alpha_2)}.
            \end{split}
        \end{align}
        Next, we claim that $\underline{\mathsf C_1(\alpha_1)}$ and $\underline{\mathsf D_0(\alpha_1,\alpha_2)}$ must have distinct coefficients in at least one of \eqref{eqn:di2} and \eqref{eqn:diS}.
        Indeed, suppose to the contrary that $(n_1-1)n_2 = \mathcal X^{(3)}_{i,i,0}(\alpha_1,\alpha_2)$ and $n_1n_2 = n_1 + \mathcal X^{(3)}_{0,i,0}(\alpha_1,\alpha_2)$.
        Using Proposition~\ref{pro:Xeq} and Theorem~\ref{thm:cycBasic} (i) these equations become $(n_1-1)n_2 = \mathcal X^{(3)}_{0,0,-i}(\alpha_1,\alpha_2)$ and $n_1(n_2-1) = \mathcal X^{(3)}_{0,0,i}(\alpha_1,\alpha_2)$.
        Using \eqref{eqn:particular31}, \eqref{eqn:particular32}, and \eqref{eqn:necCond}, it follows that $q_1 = q_2$, a contradiction.
        Now we can apply Lemma~\ref{lem:Wielandt} to deduce that both $\underline{\mathsf C_1(\alpha_1)}$ and $\underline{\mathsf D_0(\alpha_1,\alpha_2)}$ belong to $\mathcal S$.
        Thus, 
        \begin{align}
        \label{eqn:d02}
            \begin{split}
                \underline{\mathsf D_0(\alpha_1,\alpha_2)}^2 =& \ 3n_1n_2 \underline{\{(0,0\}} + (n_1-1)n_2\underline{\mathsf C_1(\alpha_1)}+n_1(n_2-1)\underline{\mathsf C_2(\alpha_2)}\\
                & + \sum_{j=0}^{2}\mathcal X^{(3)}_{0,0,j}(\alpha_1,\alpha_2)\underline{\mathsf D_j(\alpha_1,\alpha_2)} 
            \end{split}
        \end{align}
        is in $\mathcal S$.
        Finally, we claim that $\underline{\mathsf C_2(\alpha_1)}$ and $\underline{\mathsf D_{-i}(\alpha_1,\alpha_2)}$ must have distinct coefficients in at least one of \eqref{eqn:di2} and \eqref{eqn:d02}.
        Indeed, suppose to the contrary that $n_1(n_2-1) = \mathcal X^{(3)}_{i,i,-i}(\alpha_1,\alpha_2)$ and $n_1(n_2-1) = \mathcal X^{(3)}_{0,0,-i}(\alpha_1,\alpha_2)$, a contradiction.
        Apply Lemma~\ref{lem:Wielandt} to deduce that both $\underline{\mathsf C_2(\alpha_1)}$ and $\underline{\mathsf D_{-i}(\alpha_1,\alpha_2)}$ belong to $\mathcal S$.
\end{proof}

Corollary~\ref{cor:rank6} shows us that $\Gamma_3(\alpha_1,\alpha_2)$ has support of cardinality 2.
Furthermore, one can easily extend the proof of Corollary~\ref{cor:rank6} to verify that the case when $\Gamma_3(\alpha_1,\alpha_2)$ is strongly regular is covered by Remarks~\ref{rem:srgOA} and \ref{rem:uniformsrg}.
\begin{question}
    \label{prob:inf6}
    Do there exist infinitely many pairs of coprime prime powers $q_1, q_2 \equiv 1 \pmod 3$ for which \eqref{eqn:necCond} is satisfied for some primitive elements $\alpha_1,\alpha_2$ of $\operatorname{GF}(q_1)$ and $\operatorname{GF}(q_2)$, respectively?
\end{question}

Based on our computational investigation below, we conjecture that the answer to Question~\ref{prob:inf6} is yes.
\begin{lemma}
\label{lem:condBound}
Let $q_1$ and $q_2$ be prime powers, each congruent to $1 \pmod {3}$ and let $\alpha_1$ and $\alpha_2$ be primitive elements of $\operatorname{GF}(q_1)$ and $\operatorname{GF}(q_2)$, respectively.
Suppose that \eqref{eqn:necCond} is satisfied.
Then $(u_3(q_2)+u_3(q_1)/2)^2 \leqslant 9q_1$ and $(v_3(\alpha_2)+v_3(\alpha_1)/2)^2 \leqslant q_1/3$.
\end{lemma}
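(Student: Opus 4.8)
The plan is to combine the two norm identities that define $u_3$ and $v_3$ with the hypothesis \eqref{eqn:necCond} into a single quadratic identity, from which both inequalities fall out simultaneously by nonnegativity of squares. Writing $u_i = u_3(q_i)$ and $v_i = v_3(\alpha_i)$ for brevity, I would first record that since $v_3(\alpha_i) = \pm v_3(q_i)$ only the squares $v_i^2 = v_3(q_i)^2$ enter the relation $4q_i = u_3(q_i)^2 + 27\,v_3(q_i)^2$, so
\[
4q_1 = u_1^2 + 27 v_1^2, \qquad 4q_2 = u_2^2 + 27 v_2^2,
\]
while \eqref{eqn:necCond} reads $8q_1 - 4q_2 = u_1 u_2 + 27 v_1 v_2$.

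The key step is to expand the sum of the two target quantities against one another. I would compute
\[
\left( u_2 + \tfrac{u_1}{2} \right)^2 + 27\left( v_2 + \tfrac{v_1}{2} \right)^2 = (u_2^2 + 27 v_2^2) + (u_1 u_2 + 27 v_1 v_2) + \tfrac{1}{4}(u_1^2 + 27 v_1^2),
\]
and then substitute the three relations above. The right-hand side collapses to $4q_2 + (8q_1 - 4q_2) + \tfrac{1}{4}(4q_1) = 9q_1$, giving the clean identity $\left(u_2 + u_1/2\right)^2 + 27\left(v_2 + v_1/2\right)^2 = 9q_1$.

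Since both summands on the left are nonnegative, each is bounded above by $9q_1$: the first bound is exactly $(u_2 + u_1/2)^2 \leqslant 9q_1$, and dividing $27(v_2 + v_1/2)^2 \leqslant 9q_1$ by $27$ yields $(v_2 + v_1/2)^2 \leqslant q_1/3$, as required.

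There is no genuine obstacle here beyond spotting the correct combination to form; the identity is essentially forced by completing the square in the quadratic form $x^2 + 27 y^2$, reading the three hypotheses as the values of this form at $(u_1,v_1)$, at $(u_2,v_2)$, and of its associated bilinear form. The only point needing a word of care is the passage from $v_3(q_i)$ to $v_3(\alpha_i)$, which is harmless precisely because the sign ambiguity disappears inside the squares appearing in the norm identities.
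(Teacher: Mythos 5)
Your proof is correct, and it is a cleaner packaging of the same underlying algebra as the paper's. Both arguments come down to completing the square in the binary quadratic form $x^2+27y^2$, using the norm identities $4q_i = u_3(q_i)^2+27v_3(\alpha_i)^2$ together with \eqref{eqn:necCond}. You evaluate the form directly at the shifted point and obtain the exact identity
\[
\left(u_3(q_2)+\tfrac{u_3(q_1)}{2}\right)^2 + 27\left(v_3(\alpha_2)+\tfrac{v_3(\alpha_1)}{2}\right)^2 = 9q_1,
\]
from which both bounds follow at once by nonnegativity. The paper instead rewrites \eqref{eqn:necCond} in the factored form
\[
-27\bigl(v_3(\alpha_2)+2v_3(\alpha_1)\bigr)\bigl(v_3(\alpha_2)-v_3(\alpha_1)\bigr) = \bigl(u_3(q_2)+2u_3(q_1)\bigr)\bigl(u_3(q_2)-u_3(q_1)\bigr)
\]
and then bounds each side by the extremum of the other, which amounts to performing the same completing-the-square computation twice (once in the $u$-variables, once in the $v$-variables). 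Your route buys a slightly sharper conclusion: the two quantities in the lemma, after weighting the second by $27$, sum to exactly $9q_1$, so each inequality is tight precisely when the other term vanishes; it is also a single substitution rather than an extremal argument. The paper's factored form, on the other hand, is of the same shape as the computation it reuses in the subsequent lemma classifying the even solutions of \eqref{eqn:necCond}. Your handling of the sign ambiguity is also correct: only the square $v_3(\alpha_i)^2 = v_3(q_i)^2$ enters the norm identities, so the choice of sign is immaterial there.
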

\begin{proof}
    Equation~\eqref{eqn:necCond} is equivalent to the equation
\begin{equation}
    \label{eqn:cond}
    -27(v_3(\alpha_2)+2v_3(q_1))(v_3(\alpha_2)-v_3(q_1)) = (u_3(\alpha_2)+2u_3(q_1))(u_3(\alpha_2)-u_3(q_1)).
\end{equation}
    The lemma follows from the fact that the left-hand side of \eqref{eqn:cond} must be at least the minimum value of the right-hand side of \eqref{eqn:cond} and the right-hand side of \eqref{eqn:cond} must be at most the maximum value of the left-hand side of \eqref{eqn:cond}.
\end{proof}

Lemma~\ref{lem:condBound} implies that for any fixed prime power $q_1 \equiv 1 \pmod 3$, there are only finitely many prime powers $q_2 \equiv 1 \pmod 3$ for which \eqref{eqn:necCond} can be satisfied.
This means that given a prime power $q_1 \equiv 1 \pmod 3$, one can efficiently produce a list of prime powers $q_2 \equiv 1 \pmod 3$ that satisfy \eqref{eqn:necCond} together with $q_1$.
See Table~\ref{tab:list} for the result of such a computation for $q_1$, a prime power less than 250.

\begin{table}[htbp]
    \centering
    \begin{tabular}{c|c || c | c || c | c || c | c}
        $q_1$ & $q_2$ &  $q_1$ & $q_2$  &  $q_1$ & $q_2$ &  $q_1$ & $q_2$  \\
        \hline
        4 & 7  &   13 & 49 &    61 & 97   &   163 & 211 \\
        4 & 13 &   19 & 67 &    97 & 241  &   169 & 313 \\ 
        7 & 16 &   31 & 43 &    109 & 443 &   193 & 769 \\
        7 & 19 &   37 & 73 &    139 & 331 &   199 & 787\\
        13 & 16 &  49 & 193 &    151 & 163&    223 & 811 
    \end{tabular}
    \caption{Pairs of prime powers $q_1, q_2$ for which \eqref{eqn:necCond} is satisfied for some primitive elements $\alpha_1,\alpha_2$ of $\operatorname{GF}(q_1)$ and $\operatorname{GF}(q_2)$, respectively.}
    \label{tab:list}
\end{table}

Each pair of prime powers in Table~\ref{tab:list} corresponds to a Neumaier graph with coherent rank 6 via Corollary~\ref{cor:rank6}.

One can perform further analysis on particular prime powers, such as the following lemma, which classifies even solutions to \eqref{eqn:necCond}.

\begin{lemma}
    Suppose that $q_1$ and $q_2$ are coprime prime powers each congruent to $1 \pmod 3$ that satisfy \eqref{eqn:necCond}.
    Suppose that $q_1q_2$ is even.
    Then $(q_1,q_2)$ is equal to $(4,7)$, $(4,13)$, $(7,16)$, or $(13,16)$.
\end{lemma}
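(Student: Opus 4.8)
The plan is to exploit the fact that exactly one of $q_1,q_2$ is a power of $2$ and that powers of $2$ have especially rigid cyclotomy of order $3$. First I would observe that since $\gcd(q_1,q_2)=1$ and $q_1q_2$ is even, precisely one of the two, say $Q$, is a power of $2$; writing $Q=2^s$ and noting $2^s\equiv(-1)^s\pmod 3$, the congruence $Q\equiv 1\pmod 3$ forces $s$ even, so $Q=4^t$ for some $t\geqslant 1$, and I write $P$ for the remaining (odd) prime power. Because $-1\equiv 2=2^1\pmod 3$ is a power of $2$, Lemma~\ref{lem:quniform} shows that the order-$3$ cyclotomy of $\operatorname{GF}(Q)$ is uniform; comparing the uniform values, which satisfy $c_3(\cdot;0,1)=c_3(\cdot;0,2)$, with Theorem~\ref{thm:ord3cyc}, where these two numbers differ by $v_3$, forces $v_3=0$ for $Q$. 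Consequently $4Q=u_3(Q)^2$, so $u_3(Q)=\pm 2\sqrt{Q}=\pm 2^{t+1}$.

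Next I would feed $v_3(Q)=0$ into \eqref{eqn:necCond}. The cross term $27v_3(\alpha_1)v_3(\alpha_2)$ vanishes, leaving the reduced identity $4(2q_1-q_2)=u_3(q_1)u_3(q_2)$. This has two consequences. Using $u_3(q)^2\leqslant u_3(q)^2+27v_3(q)^2=4q$, hence $|u_3(q)|\leqslant 2\sqrt{q}$, I obtain $|2q_1-q_2|\leqslant\sqrt{q_1q_2}$; writing $x=q_2/q_1$ this reads $(x-1)(x-4)\leqslant 0$, i.e. $q_1\leqslant q_2\leqslant 4q_1$. Substituting $u_3(Q)=\pm 2^{t+1}$ and solving for the $u_3$ of the odd factor gives $u_3(P)=\pm(2q_1-q_2)/2^{t-1}$, so integrality forces $2^{t-1}\mid(2q_1-q_2)$.

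The finiteness now comes from a $2$-adic valuation count, which is the crux of the argument. If $Q=q_1$ then $2q_1-q_2=2Q-P$ is odd, so $2^{t-1}\mid(2Q-P)$ forces $t=1$ and $Q=4$. If instead $Q=q_2$ then $2q_1-q_2=2P-Q$ has $2$-adic valuation exactly $1$ (since $v_2(2P)=1<2t=v_2(Q)$), so $t\leqslant 2$; here the sub-case $t=1$, i.e. $Q=4=q_2$, is impossible because then $q_1\leqslant q_2=4$ leaves no odd prime power congruent to $1\pmod 3$, whence $t=2$ and $Q=16$. In either case the bound $q_1\leqslant q_2\leqslant 4q_1$ confines the odd factor $P$ to the interval $[4,16]$, where the only odd prime powers congruent to $1\pmod 3$ are $7$ and $13$. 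Reading off the ordered pairs $(q_1,q_2)$ from the two cases yields exactly $(4,7)$, $(4,13)$, $(7,16)$, and $(13,16)$. The main obstacle is the clean determination of $(u_3,v_3)$ for powers of $2$, namely establishing $v_3=0$, together with keeping the parity and sign bookkeeping consistent across the two cases; once $v_3=0$ is in hand, the $2$-adic valuation step does the real work of forcing $t$ to be small and hence the problem finite.
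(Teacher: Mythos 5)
Your proof is correct, and its skeleton coincides with the paper's at the decisive step: both arguments identify the even factor as a power of $4$ with $v_3=0$ (so that \eqref{eqn:necCond} collapses to $4(2q_1-q_2)=u_3(q_1)u_3(q_2)$) and then use a parity/$2$-adic valuation count to force that power of $4$ to be $4$ when it is $q_1$, respectively $16$ when it is $q_2$. Where you genuinely diverge is the endgame. The paper pins down the odd factor by substituting back into the norm form $4q=u_3(q)^2+27v_3(q)^2$: for $q_1=4$ this yields $-27v_3^2(q_2)=(u_3(q_2)+8)(u_3(q_2)-4)$, and enumerating $u_3(q_2)\equiv 1\pmod 3$ in the forced range gives $q_2\in\{7,13\}$ (the even solutions $q_2=4,16$ being excluded by coprimality); the symmetric case is left as ``similar.'' You instead extract the interval bound $q_1\leqslant q_2\leqslant 4q_1$ from $|u_3(q)|\leqslant 2\sqrt{q}$ and then simply enumerate odd prime powers congruent to $1\pmod 3$ in $[4,16]$. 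Your bound buys a uniform treatment of both cases — in particular it disposes of the sub-case $q_2=4$ at once, which the paper's route would instead kill by noting $-27v_3^2(q_1)=4(q_1-1)(q_1-4)>0$ is impossible — at the cost of producing only candidate pairs rather than also checking norm-form consistency; since the lemma asserts only the implication (the pair \emph{must} be among the four listed), candidates are all that is needed, so this is not a gap. Two smaller differences: you derive $v_3=0$ for the even factor from uniform cyclotomy (Lemma~\ref{lem:quniform} with $-1\equiv 2\pmod 3$ a power of $2$) compared against Theorem~\ref{thm:ord3cyc}, where the paper asserts it outright; and you carry an ambiguous sign $u_3(Q)=\pm 2^{t+1}$, which is harmless because you only use magnitudes and divisibility, whereas the paper fixes $u_3(q_1)=(-2)^{r+1}$ via the normalisation $u_3\equiv 1\pmod 3$.
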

\begin{proof}
    Suppose that $q_1 \equiv 1 \pmod 3$ is an even prime power then $q_1 = 2^{2r}$ for some $r \in \mathbb N$.
Hence, $u_3(q_1) = (-2)^{r+1}$ and $v_3(q_1) = 0$.
Then \eqref{eqn:necCond} becomes 
\[
2^{2r+1}-q_2 = (-2)^{r-1}u_3(q_2).
\]
If $r > 1$ then $q_2$ must be even.
Otherwise, if $r = 1$ then we have $q_1 = 4$ and $2^{3}-q_2 = u_3(q_2)$.
It follows that $-27v_3^2(q_2) = (u_3(q_2) +8)(u_3(q_2) -4)$.
Hence, $q_2 = 7$ or $q_2 = 13$.

The proof for the case when $q_2$ is even follows in a similar fashion.
\end{proof}

\subsection{Coherent rank 7}

In this section, we consider Neumaier graphs that have coherent rank 7.
Before we present the result of our new construction, we give examples of Neumaier graphs in the literature whose coherent rank is $7$. 

\begin{example}
Consider the direct product of the additive groups $\mathbb Z/2\mathbb Z$ and $\mathbb Z/8\mathbb Z$.
Define the subsets $\mathsf S_1 = \{(0,0)\}$, $\mathsf S_2 = \{(1,0)\}$, $\mathsf S_3 = \{(0,4)\}$, $\mathsf S_4 = \{(1,4)\}$,
\begin{align*}
 \mathsf S_5 &= \{(0,1),(0,7),(1,1),(1,7)\}, \\
    \mathsf S_6 &= \{(0,2),(0,6),(1,2),(1,6)\}, \\
    \mathsf S_7 &= \{(0,3),(0,5),(1,3),(1,5)\}.
\end{align*}

The graph $\Omega = \operatorname{Cay}(\mathbb Z/2\mathbb Z \times \mathbb Z/8\mathbb Z,\mathsf S_4 \cup \mathsf S_5 \cup \mathsf S_6)$ is a Neumaier graph with parameters $(16,9,2; 2,4)$.
Evans et al.~\cite{EGP19} showed that $\Omega$ is the only Neumaier graph having parameters $(16,9,2; 2,4)$ and that there is no strictly Neumaier graph on less than 16 vertices.
The minimal Schur ring over $\mathbb Z/2\mathbb Z \times \mathbb Z/8\mathbb Z$ that contains $\mathsf S_4 \cup \mathsf S_5 \cup \mathsf S_6$ has basic sets $\mathsf S_1, \dots, \mathsf S_7$.
Hence, $\Omega$ has coherent rank 7.
The graph $\Omega$ can also be expressed as a Cayley graph over the dihedral group $D_{16}$~\cite{jazaeri2023neumaiercayleygraphs}.
\end{example}

Abiad et al.~\cite{abiad2023infinite} produced a family of Neumaier graphs that contains two examples that have coherent rank $7$.
Let $p$ and $q$ be distinct primes, let $\alpha$ be a primitive root of both $p$ and $q$, and let $(p-1)(q-1) = mn$ with $m = \operatorname{gcd}(p-1,q-1)$.
By \cite[Lemma 1]{Whiteman}, there exists an integer $x$ such that the sets $\mathsf K_{i}(\alpha) := \{x^i\alpha^j \pmod {pq} \;:\; 0 \leqslant j \leqslant  n-1\}$ with $i \in \{0,\dots,m-1\}$ partition $(\mathbb Z/pq\mathbb Z)^*$, where $R^*$ denotes the subset of invertible elements of a ring $R$.

A $\textbf{spread}$ of cocliques of a graph $\Gamma$ is a partition of the vertex set of $\Gamma$, each of whose parts induces a coclique.
Let $\Gamma$ be a graph having a spread of cocliques $C_1,\dots,C_p$.
Denote $t$ copies of $\Gamma$ by $\Gamma_1,\dots,\Gamma_t$ with $C_{i,1},\dots,C_{i,p}$ denoting the spread of cocliques in the $i$th copy $\Gamma_i$.
For permutations $\pi_2,\dots,\pi_t \in \operatorname{Sym}(p)$, define $F_{(\pi_2,\dots,\pi_t)}(\Gamma)$ to be the disjoint union of the graphs $\Gamma_1,\dots,\Gamma_t$ and for each $k \in \{1,\dots,p\}$ add the edges between all vertices in $C_{1,k} \cup C_{2,\pi_2(k)}\cup \dots \cup C_{t,\pi_t(k)}$.

By \cite[Theorem 4.7]{abiad2023infinite}, the graph $\operatorname{Cay}(\mathbb Z/pq\mathbb Z,\mathsf K_{0}(\alpha))$ has a spread of cocliques each of order $q$.
Furthermore, one can obtain Neumaier graphs using the following theorem.

\begin{theorem}[{cf.~\cite[Theorem 4.9]{abiad2023infinite}}]
\label{thm:adbkwz}
    Let $p \ne q$ be odd primes such that $q-1$ divides $p-1$ and let $\alpha$ be a primitive root of both $p$ and $q$.
    Suppose $t = (|\mathsf K_{0}(\alpha) \cap (\mathsf K_{0}(\alpha)+1)|+2)/q$ is an integer.
    Then, for $\pi_i \in \operatorname{Sym}(p)$ the graph $F_{(\pi_2,\dots,\pi_t)}(\operatorname{Cay}(\mathbb Z/pq\mathbb Z,\mathsf K_{0}(\alpha)))$ is a Neumaier graph with parameters $(tpq,p+tq-2,tq-2; 1, tq)$.
\end{theorem}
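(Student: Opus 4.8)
The plan is to reduce everything to two structural facts about $\Gamma := \operatorname{Cay}(\mathbb Z/pq\mathbb Z,\mathsf K_0(\alpha))$ and then verify the edge-regularity and regular-clique conditions for $F_{(\pi_2,\dots,\pi_t)}(\Gamma)$ directly. First I would record that, because $q-1$ divides $p-1$ and $\alpha$ is a primitive root of both $p$ and $q$, the element $\alpha$ has order $\operatorname{lcm}(p-1,q-1)=p-1$ modulo $pq$, so $\mathsf K_0(\alpha)=\langle\alpha\rangle$ is the cyclic subgroup of order $p-1$ of $(\mathbb Z/pq\mathbb Z)^*$. Multiplication by $\alpha$ is then a graph automorphism of $\Gamma$ acting transitively on the neighbours of $0$; together with the translations this shows $\Gamma$ is edge-regular, with every edge lying in exactly $\bar\lambda=|\mathsf K_0(\alpha)\cap(\mathsf K_0(\alpha)+1)|$ triangles. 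Since $t=(\bar\lambda+2)/q$ is assumed to be an integer, we have $\bar\lambda=tq-2$.

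Next I would pin down the spread. The order-$q$ cocliques supplied by \cite[Theorem 4.7]{abiad2023infinite} are the cosets of the order-$q$ subgroup $H$ consisting of the multiples of $p$, and these are indexed by $\mathbb Z/p\mathbb Z$ via reduction modulo $p$. The crucial observation is that reduction modulo $p$ restricts to a bijection $\mathsf K_0(\alpha)\to(\mathbb Z/p\mathbb Z)^*$, since both sides have order $p-1$ and the image is the full group generated by the primitive root $\bar\alpha$. This single fact yields the two properties I need: (a) each vertex has exactly one neighbour in each of the other $p-1$ cocliques; and (b) any two distinct vertices in the \emph{same} coclique have no common neighbour in $\Gamma$, because a common neighbour $s$ would force $s\equiv s-d\pmod p$, where $d$ is the nonzero multiple of $p$ separating the two vertices, contradicting injectivity.

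With these in hand, the regular clique is $\mathfrak C_k=C_{1,k}\cup C_{2,\pi_2(k)}\cup\dots\cup C_{t,\pi_t(k)}$, a clique of order $tq$ by construction. For the nexus I would note that a vertex outside $\mathfrak C_k$ lies, in its own copy $\Gamma_i$, in a coclique different from $C_{i,\pi_i(k)}$; its only edges into $\mathfrak C_k$ are original $\Gamma_i$-edges into $C_{i,\pi_i(k)}$, and by (a) there is exactly one such edge, so $e=1$. The same bookkeeping gives the degree $(p-1)+(tq-1)=p+tq-2$ and the vertex count $tpq$, and the graph is plainly non-complete.

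The heart of the argument---and the step I expect to be the main obstacle---is showing that $F_{(\pi_2,\dots,\pi_t)}(\Gamma)$ is edge-regular with $\lambda=tq-2$ across the structurally different edge types. I would split the edges into original edges of a copy $\Gamma_i$ and clique edges of some $\mathfrak C_k$, the latter further into same-copy and cross-copy edges. For an original edge the $tq-2$ triangles all arise from $\Gamma_i$-common neighbours (their number being $\bar\lambda=tq-2$), while cross-copy common neighbours vanish because the two endpoints sit in different combined cliques. For a clique edge the other $tq-2$ vertices of $\mathfrak C_k$ are automatically common neighbours, and the delicate point is that there are no further ones: for a same-copy clique edge this is exactly fact (b), and for a cross-copy clique edge it follows because the endpoints lie in different copies. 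Thus $\lambda=tq-2$ uniformly, and combining this with the regular clique of order $tq$ and nexus $1$ establishes that $F_{(\pi_2,\dots,\pi_t)}(\Gamma)$ is a Neumaier graph with the stated parameters $(tpq,\,p+tq-2,\,tq-2;\,1,\,tq)$.
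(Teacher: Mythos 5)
A preliminary remark: the paper does not prove Theorem~\ref{thm:adbkwz} at all --- it is quoted (note the ``cf.'') from \cite{abiad2023infinite} and used as a black box. So your proposal can only be judged against the natural direct verification, which is indeed the route you take. Your structural groundwork is correct and well chosen: $\mathsf K_0(\alpha)=\langle\alpha\rangle$ is the cyclic subgroup of order $p-1$ of $(\mathbb Z/pq\mathbb Z)^*$, reduction modulo $p$ restricts to a bijection $\mathsf K_0(\alpha)\to(\mathbb Z/p\mathbb Z)^*$, and this single fact yields both your property (a) (each vertex has exactly one neighbour in every coclique other than its own) and property (b) (vertices in a common coclique have no common neighbour). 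These are exactly the right tools, and your computations of the order, degree, clique size and nexus are correct.

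There are, however, two genuine gaps. The first is in the edge-regularity count for an \emph{original} edge $\{u,v\}$ of a copy $\Gamma_i$: your case analysis accounts for $\Gamma_i$-common neighbours and rules out cross-copy common neighbours, but omits the mixed same-copy case, namely a vertex $w$ of $\Gamma_i$ adjacent to $u$ by a \emph{clique} edge (so $w$ lies in $u$'s coclique) and to $v$ by an original edge, or symmetrically. Such a $w$ is not ``cross-copy'', so the reason you give does not exclude it. It must be excluded, and the exclusion is precisely the uniqueness half of your property (a): the unique $\Gamma_i$-neighbour of $v$ inside $u$'s coclique is $u$ itself. The fact is in your toolkit, but the step is absent, and without it the claim $\lambda=tq-2$ for original edges is not established.

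The second gap is more structural: you never verify that $\Gamma=\operatorname{Cay}(\mathbb Z/pq\mathbb Z,\mathsf K_0(\alpha))$ is an undirected graph, i.e.\ that $\mathsf K_0(\alpha)=-\mathsf K_0(\alpha)$, which everything downstream (edge-regularity, cliques, the Neumaier property itself) presupposes; recall the paper only calls $\operatorname{Cay}(G,S)$ a Cayley graph when $S=S^{(-1)}$. Since $\mathsf K_0(\alpha)$ is a multiplicative subgroup, symmetry amounts to $-1\in\langle\alpha\rangle\pmod{pq}$, which holds if and only if $(p-1)/(q-1)$ is odd --- and this does \emph{not} follow from the hypotheses as stated. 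Concretely, $p=5$, $q=3$, $\alpha=2$ satisfies every stated hypothesis: $q-1\mid p-1$, $\mathsf K_0(2)=\{1,2,4,8\}$, $|\mathsf K_0\cap(\mathsf K_0+1)|=1$, so $t=1$ is an integer; yet $-\mathsf K_0=\{7,11,13,14\}\ne\mathsf K_0$. So either the statement as paraphrased here is missing a hypothesis of the original theorem, or your proof must impose and use the parity condition on $(p-1)/(q-1)$ (which holds in the paper's examples $(p,q)=(13,5)$ and $(37,5)$). A fully rigorous attempt would have surfaced this; as written, the proof silently assumes it.
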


Among the infinite family of Neumaier graphs presented in \cite{abiad2023infinite}, one can find examples that have coherent rank $7$.

\begin{example}
    By Theorem~\ref{thm:adbkwz}, the graph $F_{()}(\operatorname{Cay}(\mathbb Z/65\mathbb Z,\mathsf K_{0}(2)))$ is a Neumaier graph with parameters $(65,16,3; 1, 5)$ and $F_{()}(\operatorname{Cay}(\mathbb Z/185\mathbb Z,\mathsf K_{0}(2)))$ is a Neumaier graph with parameters $(185,40,3; 1, 5)$.
    These graphs can be expressed as $\operatorname{Cay}(\mathbb Z/65\mathbb Z,\mathsf K_{0}(2) \cup 13(\mathbb Z/65\mathbb Z)^*)$ and $\operatorname{Cay}(\mathbb Z/185\mathbb Z,\mathsf K_{0}(2) \cup 37(\mathbb Z/185\mathbb Z)^*)$, respectively.
    We claim that both of these graphs have coherent rank 7.
    Indeed, one can check that the sets 
    \[
    \{0\}, p(\mathbb Z/pq\mathbb Z)^*, q(\mathbb Z/pq\mathbb Z)^*, \mathsf K_{0}(\alpha), \dots, \mathsf K_{m-1}(\alpha)
    \]
    form a Schur partition of $\mathbb Z/pq\mathbb Z$.
    The corresponding Schur ring has rank $q+2$.
    Moreover, using expressions generalised cyclotomic numbers from \cite{Whiteman} (or simply running the Weisfeiler-Leman algorithm~\cite{weisfeiler1968reduction}), one can show that the graphs $\operatorname{Cay}(\mathbb Z/65\mathbb Z,\mathsf K_{0}(2) \cup 13(\mathbb Z/65\mathbb Z)^*)$ and $\operatorname{Cay}(\mathbb Z/185\mathbb Z,\mathsf K_{0}(2) \cup 37(\mathbb Z/185\mathbb Z)^*)$ have coherent rank at least $7$.
\end{example}
Let $q = p^r$ be a prime power with $q \equiv 1 \pmod 4$ and let $\alpha$ be a primitive element for $\operatorname{GF}(q)$.
Define $u_4(q), v_4(q) \in \mathbb Z$ by $q = u_4(q)^2+4v_4(q)^2$ and $u_4(q) \equiv 1 \pmod 4$, where $\operatorname{gcd}(u_4(q),p)  = 1$ if $p \equiv 1 \pmod 4$.
Note that $u_4(q)$ is uniquely determined while $v_4(q)$ is determined only up to sign~\cite[Proposition 11]{Myerson81}.
The sign for $v_4(q)$ can be chosen to agree with the choice of primitive element $\alpha$ in Theorem~\ref{thm:ord4cyc}, below, with respect to the value of cyclotomic numbers of order $4$.
We write $v_4(\alpha)$ for such a choice of $v_4(q)$.

\begin{theorem}[{\cite[Lemma 19 and Lemma 19']{Storer1967},\cite[Proposition 11]{Myerson81}}]
\label{thm:ord4cyc}
    Let $q$ be a prime power with $q=4n +1$ and $\alpha$ be a primitive element for $\operatorname{GF}(q)$. 
    Let $\mathfrak r_n$ be the remainder of $n$ after division by $2$. 
    Then
    \begin{align*}
        c_4(\alpha; 0,2\mathfrak r_n) &= \frac{q-6u_4(q)-11+12\mathfrak r_n}{16}; \\
         c_4(\alpha; 1,1+2\mathfrak r_n) = c_4(\alpha; 0,3+2\mathfrak r_n)= c_4(\alpha; 3,2\mathfrak r_n) &= \frac{q+2u_4(q)-3-8v_4(\alpha)+4\mathfrak r_n}{16}; \\
         c_4(\alpha;2,2+2\mathfrak r_n) = c_4(\alpha; 0,2+2\mathfrak r_n) = c_4(\alpha; 2,2\mathfrak r_n) &= \frac{q+2u_4(q)-3-4\mathfrak r_n}{16}; \\
       c_4(\alpha; 3,3+2\mathfrak r_n) = c_4(\alpha; 0,1+2\mathfrak r_n) = c_4(\alpha; 1,2\mathfrak r_n) &= \frac{q+2u_4(q)-3+8v_4(\alpha)+4\mathfrak r_n}{16};
    \end{align*}
    and for each $(i,j) \in \{1,2,3\}^2$ with $i \ne j$, we have
    \[
            c_4(\alpha; i,j+2\mathfrak r_n) = \frac{q+2u_4(q)+1-4\mathfrak r_n(u_4(q)+1)}{16}.
    \]
\end{theorem}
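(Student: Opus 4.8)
The plan is to evaluate these order-$4$ cyclotomic numbers by passing to multiplicative characters and expressing each $c_4(\alpha;i,j)$ as a short linear combination of Jacobi sums. Fix the quartic character $\chi$ of $\operatorname{GF}(q)^*$ determined by $\chi(\alpha)=\sqrt{-1}$, and recall from the definition of $c_m(\alpha;a,b)$ that $c_4(\alpha;i,j)$ counts the elements $y$ lying in the cyclotomic class indexed by $j$ for which $y-1$ lies in the class indexed by $i$. First I would write the indicator of a cyclotomic class as $\tfrac14\sum_{t=0}^{3}\overline{\chi}^{\,t}(\alpha^{\,\cdot})\chi^{t}(\cdot)$, so that
\[
c_4(\alpha;i,j)=\frac{1}{16}\sum_{s=0}^{3}\sum_{t=0}^{3}\overline{\chi}^{\,s}(\alpha^{i})\,\overline{\chi}^{\,t}(\alpha^{j})\sum_{y}\chi^{s}(y-1)\,\chi^{t}(y),
\]
where the inner sum runs over $y\neq 0,1$ and equals, up to a twist by $\chi^{s}(-1)$ and boundary corrections, the Jacobi sum $J(\chi^{s},\chi^{t})$. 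The terms with $s=0$ or $t=0$ collapse to trivial character sums and supply the principal contribution (the $q$ and constant parts of every formula).

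The evaluation then reduces to a handful of Jacobi sums. The quadratic terms, those with $s,t\in\{0,2\}$, are governed by the quadratic Gauss sum and reproduce the order-$2$ data of Theorem~\ref{thm:ord2cyc}; the genuinely quartic terms are controlled by $J(\chi,\chi)$, $J(\chi,\chi^{2})$, and $J(\chi^{2},\chi)$. The arithmetic heart is the classical evaluation $J(\chi,\chi)=u_4(q)+2v_4(\alpha)\sqrt{-1}$, normalised by $u_4(q)\equiv 1\pmod 4$, whose real and imaginary parts furnish exactly the decomposition $q=u_4(q)^2+4v_4(q)^2$. I expect this to be the main obstacle: establishing $|J(\chi,\chi)|^2=q$ and pinning down the residue of $u_4(q)$ requires the Gauss-sum machinery (and the prime-power descent $q=p^r$ via Stickelberger's congruence), while the sign of $v_4$ is only meaningful relative to the chosen primitive element $\alpha$, which is precisely the dependence recorded by writing $v_4(\alpha)$ rather than $v_4(q)$.

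With the Jacobi sums in hand, I would split into the two parity regimes according to whether $-1$ is a fourth power, equivalently $\chi(-1)=(-1)^{(q-1)/4}=(-1)^{n}$. This sign determines whether $\alpha^{(q-1)/2}$ lies in class $0$ or class $2$ and is the source of the $\mathfrak r_n$ shift in the subscripts; substituting $\chi(-1)=(-1)^{\mathfrak r_n}$ separates the formulas into the $\mathfrak r_n=0$ and $\mathfrak r_n=1$ cases appearing in the statement. Finally, rather than computing all sixteen numbers independently, I would invoke Theorem~\ref{thm:cycBasic}: part (i) collapses the numbers into the displayed orbits, and the row and column sums of parts (ii)--(iii) serve as consistency checks that fix the remaining ambiguities. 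Simplifying the resulting expressions in terms of $q$, $u_4(q)$, and $v_4(\alpha)$ then yields the four displayed values together with the single uniform value $\tfrac{1}{16}\bigl(q+2u_4(q)+1-4\mathfrak r_n(u_4(q)+1)\bigr)$ for the off-diagonal numbers $c_4(\alpha;i,j)$ with $i\neq j$.
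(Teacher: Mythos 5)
First, a structural remark: the paper does not prove this theorem at all --- it is quoted from Storer and from Myerson --- and the arguments in those sources are precisely the character-sum computations you outline, so your overall strategy (quartic-character indicators, reduction of each $c_4(\alpha;i,j)$ to Jacobi sums, and the case split on $\chi(-1)=(-1)^n$) is the classical route. The problem is that your central arithmetic input is stated with the wrong sign, and the error is fatal if carried through. Under the paper's normalization $u_4(q)\equiv 1\pmod 4$, the classical evaluation of the quartic Jacobi sum gives $\operatorname{Re}J(\chi,\chi)=-u_4(q)$, i.e.\ $\operatorname{Re}J(\chi,\chi)\equiv 3\pmod 4$; this sign is forced by the standard $(1+i)$-adic normalization of $J(\chi,\chi)$ in $\mathbb Z[i]$, not merely by $|J(\chi,\chi)|^2=q$. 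For example, for $q=17$ and $\chi(3)=i$ one computes $J(\chi,\chi)=-1+4i$, while $u_4(17)=1$. Feeding your claimed value $J(\chi,\chi)=u_4(q)+2v_4(\alpha)i$ into the expansion produces $c_4(\alpha;0,2\mathfrak r_n)=\bigl(q+6u_4(q)-11+12\mathfrak r_n\bigr)/16$, which for $q=17$ (where $\mathfrak r_n=0$ and the true value is $c_4(\alpha;0,0)=0$) equals $12/16$: not even an integer, so the derivation collapses.

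Second, the safety net you invoke cannot repair this. In every row and column sum of Theorem~\ref{thm:cycBasic}(ii)--(iii) the $u_4$- and $v_4$-terms cancel identically, so those identities are insensitive to the sign of $u_4(q)$ and genuinely cannot ``fix the remaining ambiguities''. Pinning down that sign (and handling $q=p^r$ via Davenport--Hasse), together with the sixteen-entry bookkeeping you defer to ``simplifying the resulting expressions'', is exactly the content of the cited lemmas of Storer and Myerson; as written, your proposal outlines the right method but asserts its key lemma incorrectly and omits the steps that constitute the actual proof. Incidentally, a correct execution of your plan would also expose a misprint in the statement as given: for $\mathfrak r_n=0$ the off-diagonal value must be $\bigl(q-2u_4(q)+1\bigr)/16$, not $\bigl(q+2u_4(q)+1\bigr)/16$ (for $q=17$ one checks directly that $c_4(\alpha;1,2)=1=(17-2+1)/16$, whereas the printed expression gives $20/16$); a uniform expression consistent with both parities and with Theorem~\ref{thm:cycBasic} is $\bigl(q-2u_4(q)+1-4\mathfrak r_n\bigr)/16$.
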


Define 
\begin{align*}
    \Phi_4(\alpha_1,\alpha_2) &:= u_4(q_1)u_4(q_2)+4v_4(\alpha_1)v_4(\alpha_2); \\
    \Psi_4(\alpha_1,\alpha_2) &:= u_4(q_1)v_4(\alpha_2) - u_4(q_2)v_4(\alpha_1).
\end{align*}
Let $q_1 = 1+4n_1$ and $q_2 = 1+4n_2$ be prime powers.
Let $\alpha_1$ and $\alpha_2$ be primitive elements of $\operatorname{GF}(q_1)$ and $\operatorname{GF}(q_2)$, respectively.
Suppose that $n_1 \equiv n_2 \pmod 2$.
Then, by Theorem~\ref{thm:ord4cyc}, we have
\begin{align*}
     \mathcal X^{(4)}_{0,0,0}(\alpha_1,\alpha_2) & =\frac{(q_1-2)(q_2-2)}{16}+\frac{3\Phi_4(\alpha_1,\alpha_2)}{8} + \frac{9}{16}; \\ 
    \mathcal X^{(4)}_{0,0,1}(\alpha_1,\alpha_2) & =  \frac{(q_1-2)(q_2-2)}{16}-\frac{\Phi_4(\alpha_1,\alpha_2)}{8}+ \frac{1}{16}+(-1)^{n_1}\frac{\Psi_4(\alpha_1,\alpha_2)}{2};\\
    \mathcal X^{(4)}_{0,0,2}(\alpha_1,\alpha_2) & =  \frac{(q_1-2)(q_2-2)}{16}-\frac{\Phi_4(\alpha_1,\alpha_2)}{8}+ \frac{1}{16}; \\ 
        \mathcal X^{(4)}_{0,0,3}(\alpha_1,\alpha_2) & =  \frac{(q_1-2)(q_2-2)}{16}-\frac{\Phi_4(\alpha_1,\alpha_2)}{8}+ \frac{1}{16} - (-1)^{n_1}\frac{\Psi_4(\alpha_1,\alpha_2)}{2}.
\end{align*}

In order to guarantee coherent rank 7, in Corollary~\ref{cor:rank7}, below, we will require the following lemma.

\begin{lemma}
\label{lem:fullRank}
    Let $m \geqslant 4$ and $n_1,n_2 \geqslant 1$ be integers.
Suppose that $q_1 = 1+mn_1$ and $q_2 = 1+mn_2$ be prime powers.
Let $\alpha_1$ and $\alpha_2$ be primitive elements of $\operatorname{GF}(q_1)$ and $\operatorname{GF}(q_2)$, respectively.
Suppose that $\mathcal X^{(m)}_{0,0,i}(\alpha_1,\alpha_2)$ are pairwise distinct for each $i \in \{1,\dots,m-1\}$.
Let $\mathcal S$ be the minimal Schur ring over $\operatorname{GF}(q_1) \times \operatorname{GF}(q_2)$ for which $\underline{\mathsf C_1(\alpha_1)} + \underline{\mathsf D_{0}(\alpha_1,\alpha_2 )} \in \mathcal S$.
Then the basic sets of $\mathcal S$ are 
\[
\{(0,0)\},\mathsf C_1(\alpha_1), \mathsf C_2(\alpha_2), \mathsf D_{0}(\alpha_1,\alpha_2 ), \dots, \mathsf D_{m-1}(\alpha_1,\alpha_2 ).
\]
\end{lemma}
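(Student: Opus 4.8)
The plan is to first trap $\mathcal S$ between two Schur rings and then recover the claimed basic sets one at a time by the Schur--Wielandt principle. By Lemma~\ref{lem:Spart-rankm+3} the claimed partition is a Schur partition, so it underlies a Schur ring $\mathcal T$; since $\underline{\mathsf C_1(\alpha_1)}+\underline{\mathsf D_0(\alpha_1,\alpha_2)}$ is a sum of sets in this partition, it lies in $\mathcal T$, and minimality of $\mathcal S$ forces $\mathcal S\subseteq\mathcal T$. Hence every basic set of $\mathcal S$ is a union of sets from the claimed partition, and it suffices to show that each of $\{(0,0)\}$, $\mathsf C_1(\alpha_1)$, $\mathsf C_2(\alpha_2)$, and $\mathsf D_0(\alpha_1,\alpha_2),\dots,\mathsf D_{m-1}(\alpha_1,\alpha_2)$ belongs to $\mathcal S$. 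The identity $\{(0,0)\}$ is automatically a basic set by axiom (S1), so the task is to separate the remaining $m+2$ sets.

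Next I would compute $\underline S^2$ with $\underline S=\underline{\mathsf C_1(\alpha_1)}+\underline{\mathsf D_0(\alpha_1,\alpha_2)}$ exactly as in the proof of Corollary~\ref{cor:rank6}, obtaining the analogue of \eqref{eqn:S2} over the full range $i\in\{1,\dots,m-1\}$: the coefficient of $\underline{\mathsf D_i(\alpha_1,\alpha_2)}$ is $2n_1+\mathcal X^{(m)}_{0,0,i}(\alpha_1,\alpha_2)$, while $\mathsf C_1(\alpha_1)$, $\mathsf C_2(\alpha_2)$ and $\mathsf D_0(\alpha_1,\alpha_2)$ carry the coefficients $\lambda_1$, $n_1(n_2+1)$ and $\lambda_2$ recorded there. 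The hypothesis that the $\mathcal X^{(m)}_{0,0,i}(\alpha_1,\alpha_2)$ are pairwise distinct for $i\in\{1,\dots,m-1\}$ makes the coefficients on $\mathsf D_1,\dots,\mathsf D_{m-1}$ pairwise distinct, so Lemma~\ref{lem:Wielandt} already isolates from $\underline S^2$ every $\underline{\mathsf D_i}$ whose coefficient avoids $\lambda_1$, $n_1(n_2+1)$ and $\lambda_2$.

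The remaining work is to resolve the coefficient collisions, in which one of $\mathsf C_1$, $\mathsf C_2$, $\mathsf D_0$ shares its $\underline S^2$-coefficient with some $\mathsf D_i$. To break these ties I would imitate the cascade of Corollary~\ref{cor:rank6}: once a single $\underline{\mathsf D_i(\alpha_1,\alpha_2)}$ has been isolated, form $\underline{\mathsf D_i}\,\underline S$ and $\underline{\mathsf D_i}^2$ via Lemma~\ref{lem:Spart-rankm+3}, whose $\mathsf D_k$-coefficients are governed by $\mathcal X^{(m)}_{i,0,k}$ and $\mathcal X^{(m)}_{i,i,k}$. By Proposition~\ref{pro:Xeq} and Theorem~\ref{thm:cycBasic}(i) one has $\mathcal X^{(m)}_{i,i,k}=\mathcal X^{(m)}_{0,0,k-i}$, so on the full-support sets these products reproduce the pairwise-distinct pattern up to an index shift (with only $\mathsf D_i$ itself carrying the possibly-nongeneric value $\mathcal X^{(m)}_{0,0,0}$), while $\mathsf C_1$ and $\mathsf C_2$ acquire the partial-support coefficients $(n_1-1)n_2$ and $n_1(n_2-1)$. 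Taking common refinements of level sets — which stays inside $\mathcal S$, since $\underline{A\cap B}$ is recovered by applying Lemma~\ref{lem:Wielandt} with $c=3$ to $\underline A+2\underline B$ — and subtracting already-isolated sets, I would peel off the sets one at a time, recovering $\mathsf C_1$, $\mathsf C_2$, $\mathsf D_0$ (the last, if convenient, as $\underline S-\underline{\mathsf C_1(\alpha_1)}$) and then the remaining $\mathsf D_k$.

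The hard part is the bookkeeping that guarantees no pair of target sets survives every product with equal coefficients; concretely, for each of the $\binom{m+2}{2}$ pairs one must exhibit a product in $\mathcal S$ separating it, ruling out the resulting coincidence of cyclotomic sums using the distinctness hypothesis, the identities of Theorem~\ref{thm:cycBasic}, and the standing constraints $m\geqslant4$ and $q_i=1+mn_i$. For instance, separating $\mathsf C_1$ from $\mathsf C_2$ reduces to $(n_1-1)n_2\neq n_1(n_2-1)$ or $\lambda_1\neq n_1(n_2+1)$, and the failure of both would force $n_1=n_2$ together with $(m-1)n_1=n_2+1$, hence $(m-2)n_1=1$, which is incompatible with $m\geqslant4$; the collision between $\mathsf D_0$ and a given $\mathsf D_i$ is the direct analogue of the step ``$\mathsf C_1$ and $\mathsf D_0$ have distinct coefficients in at least one of \eqref{eqn:di2} and \eqref{eqn:diS}'' in Corollary~\ref{cor:rank6}, resolved by comparing coefficients across $\underline S^2$, $\underline{\mathsf D_i}\,\underline S$ and $\underline{\mathsf D_i}^2$. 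Verifying that these comparisons exhaust all pairs is where the case analysis concentrates, and is the step I expect to be most delicate.
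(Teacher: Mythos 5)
Your high-level strategy coincides with the paper's (trap $\mathcal S$ inside the Schur ring of the claimed partition, expand $\underline S^2$ via Lemma~\ref{lem:Spart-rankm+3}, then run a Schur--Wielandt cascade), but two steps the paper supplies are missing, and without them your cascade does not go through. First, everything after your second paragraph is conditioned on ``once a single $\underline{\mathsf D_i(\alpha_1,\alpha_2)}$ has been isolated,'' which you never establish. For $m=4$ there are only three full-support coefficients $\mu_1,\mu_2,\mu_3$, and nothing you say excludes the worst case $\mu_1=\lambda_1$, $\mu_2=\lambda_2$, $\mu_3=n_1(n_2+1)$, in which Lemma~\ref{lem:Wielandt} applied to $\underline S^2$ isolates nothing at all. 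The repair (implicit in the paper's claim that there exist \emph{two} indices $j\neq k$ with $\underline{\mathsf D_j},\underline{\mathsf D_k}\in\mathcal S$) is to refine by $\underline S$ itself: since $\mathsf C_1(\alpha_1),\mathsf D_0(\alpha_1,\alpha_2)\subseteq S$ while $\mathsf D_i(\alpha_1,\alpha_2)\cap S=\emptyset$ for $i\geqslant1$, collisions of $\mu_i$ with $\lambda_1$ or $\lambda_2$ are harmless (apply Lemma~\ref{lem:Wielandt} to $N\underline S+\underline S^2$ for $N$ large), so only the single possible collision $\mu_{i_0}=n_1(n_2+1)$ survives, giving at least $m-2\geqslant2$ isolated classes.

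Second, and more seriously, the one-index cascade you sketch cannot finish from the stated hypothesis. To separate $\mathsf C_1(\alpha_1)$ from $\mathsf D_0(\alpha_1,\alpha_2)$ your available comparisons are $(n_1-1)n_2$ versus $\mathcal X^{(m)}_{i,i,0}(\alpha_1,\alpha_2)=\mathcal X^{(m)}_{0,0,-i}(\alpha_1,\alpha_2)$ from $\underline{\mathsf D_i}^2$, and $n_1n_2$ versus $n_1+\mathcal X^{(m)}_{i,0,0}(\alpha_1,\alpha_2)$ from $\underline{\mathsf D_i}\,\underline S$; neither $\mathcal X^{(m)}_{0,0,-i}$ in isolation nor $\mathcal X^{(m)}_{i,0,0}$ is controlled by the pairwise-distinctness hypothesis, so both ties could occur simultaneously. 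This is precisely why the $m=3$ argument of Corollary~\ref{cor:rank6} needed the extra arithmetic condition \eqref{eqn:necCond}, which is unavailable here. The paper's key device, absent from your plan, is to use the two isolated indices in tandem: in \eqref{eqn:dj2} and \eqref{eqn:dk2} the class $\mathsf C_1(\alpha_1)$ carries the \emph{same} coefficient $(n_1-1)n_2$, while $\mathsf D_0(\alpha_1,\alpha_2)$ carries $\mathcal X^{(m)}_{0,0,-j}\neq\mathcal X^{(m)}_{0,0,-k}$, so at most one of the two equations can exhibit a tie; the identical trick with the common value $n_1(n_2-1)$ separates $\mathsf C_2(\alpha_2)$ from the one problematic class $\mathsf D_{i_0}$, since $\mathcal X^{(m)}_{0,0,i_0-j}\neq\mathcal X^{(m)}_{0,0,i_0-k}$. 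Finally, your worry about checking all $\binom{m+2}{2}$ pairs is unfounded: $\underline S$ and $\underline S^2$ already separate every pair of target sets except $\{\mathsf C_1,\mathsf D_0\}$ and $\{\mathsf C_2,\mathsf D_{i_0}\}$, so the two comparisons above complete the proof.
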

\begin{proof}
    We give the proof assuming that $q_1n_1 \equiv q_2n_2 \pmod 2$.
    The proof for the case $q_1n_1 \not \equiv q_2n_2 \pmod 2$ is similar, and we leave this to the reader.

    By Corollary~\ref{cor:coherentRanklem}, the rank of $\mathcal S$ is at most $m+3$.
    Let $S = \mathsf C_1(\alpha_1) \cup \mathsf D_{0}(\alpha_1,\alpha_2 )$.
    By Lemma~\ref{lem:Spart-rankm+3}, we have
    \begin{align*}
        \underline{S}^2 = & \ k\underline{\{(0,0)\}} + \lambda_1 \underline{\mathsf C_1(\alpha_1)} + \lambda_2\underline{\mathsf D_0(\alpha_1,\alpha_2)} + n_1(n_2+1)\underline{\mathsf C_2(\alpha_2)} + \sum_{i=1}^{m-1} \mu_i \underline{\mathsf D_i(\alpha_1,\alpha_2)},
    \end{align*}
    where $k = (q_1-1)(n_2+1)$, $\lambda_1 = q_1-2+(n_1-1)n_2$, $\lambda_2=2n_1 + \mathcal X_{0,0,0}^{(m)}(\alpha_1,\alpha_2)$, and $\mu_i = 2n_1 + \mathcal X_{0,0,i}^{(m)}(\alpha_1,\alpha_2)$ for each $i \in \{1,\dots,m-1\}$.
    Since $m \geqslant 4$, by Lemma~\ref{lem:Wielandt}, there exist at least two distinct indices $j$ and $k$ such that $\underline{\mathsf D_j(\alpha_1,\alpha_2)} \in \mathcal S$ and $\underline{\mathsf D_k(\alpha_1,\alpha_2)} \in \mathcal S$.
    By Lemma~\ref{lem:Spart-rankm+3}, we have
        \begin{equation}
    \label{eqn:dj2}
        \begin{split}
            \underline{\mathsf D_j(\alpha_1,\alpha_2)}^2 =& \ mn_1n_2\underline{\{(0,0)\}} +  (n_1-1)n_2\underline{\mathsf C_1(\alpha_1)}+n_1(n_2-1)\underline{\mathsf C_2(\alpha_2)} \\
            & + \sum_{i=0}^{m-1}\mathcal X^{(m)}_{j,j,i}(\alpha_1,\alpha_2)\underline{\mathsf D_i(\alpha_1,\alpha_2)} 
        \end{split}
    \end{equation}
            \begin{equation}
    \label{eqn:dk2}
        \begin{split}
            \underline{\mathsf D_k(\alpha_1,\alpha_2)}^2 =& \ mn_1n_2\underline{\{(0,0)\}} + (n_1-1)n_2\underline{\mathsf C_1(\alpha_1)}+n_1(n_2-1)\underline{\mathsf C_2(\alpha_2)} \\
            & + \sum_{i=0}^{m-1}\mathcal X^{(m)}_{k,k,i}(\alpha_1,\alpha_2)\underline{\mathsf D_i(\alpha_1,\alpha_2)} 
        \end{split}
    \end{equation}
    Since, by Proposition~\ref{pro:Xeq}, we have $\mathcal X^{(m)}_{j,j,0}(\alpha_1,\alpha_2) \ne \mathcal X^{(m)}_{k,k,0}(\alpha_1,\alpha_2)$, the coefficients of $\underline{\mathsf C_1(\alpha_1)}$ and $\underline{\mathsf D_0(\alpha_1,\alpha_2)}$ must be distinct in at least one of \eqref{eqn:dj2} and \eqref{eqn:dk2}.
    Now we can deduce, using Lemma~\ref{lem:Wielandt} that both $\underline{\mathsf C_1(\alpha_1)}$ and $\underline{\mathsf D_0(\alpha_1,\alpha_2)}$ belong to $\mathcal S$. 
    A similar argument applies to the coefficients of $\underline{\mathsf C_2(\alpha_1)}$ and $\underline{\mathsf D_i(\alpha_1,\alpha_2)}$ for each $i \in \{1,\dots,m-1\}$ and the conclusion can be obtained by applying Lemma~\ref{lem:Wielandt}.
\end{proof}

Now we can prove the counterpart to Corollary~\ref{cor:rank6}, above.

\begin{corollary}
\label{cor:rank7}
Let $q_1 = 1+4n_1$ and $q_2 = 1+4n_2$ be prime powers.
Let $\alpha_1$ and $\alpha_2$ be primitive elements of $\operatorname{GF}(q_1)$ and $\operatorname{GF}(q_2)$, respectively.
    Then $\Gamma=\Gamma_4(\alpha_1,\alpha_2)$ is a Neumaier graph if and only if $n_1 \equiv n_2 \pmod 2$ and
        \begin{equation}        
    \label{eqn:necCondRank7}
\frac{3q_1-q_2}{2} = u_4(q_1)u_4(q_2)+ 4v_4(\alpha_1)v_4(\alpha_2).     
\end{equation}
    Furthermore, if $\operatorname{gcd}(q_1,q_2) = 1$ then $\Gamma$ has coherent rank 7.
\end{corollary}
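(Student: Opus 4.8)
The plan is to treat the two assertions separately: the Neumaier characterisation comes straight from Theorem~\ref{thm:main}, and the coherent rank is then pinned down through Lemma~\ref{lem:fullRank}.

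First I would establish the equivalence. Since $q_1 = 1+4n_1$ and $q_2 = 1+4n_2$ are necessarily odd, the parity condition $q_1n_1 \equiv q_2n_2 \pmod 2$ of Theorem~\ref{thm:main} collapses to $n_1 \equiv n_2 \pmod 2$. Working in this regime I would substitute the displayed closed form for $\mathcal X^{(4)}_{0,0,0}(\alpha_1,\alpha_2)$ into the edge-regularity condition~\eqref{eqn:edgeregular}, writing $n_i = (q_i-1)/4$ throughout. The term $(q_1-2)(q_2-2)/16$ appearing in $\mathcal X^{(4)}_{0,0,0}$ and the term $(q_1-1)(q_2-1)/16$ arising from $n_1n_2$ differ only by a linear expression in $q_1,q_2$, so the quadratic cross terms cancel, and after clearing denominators the identity reduces to $\Phi_4(\alpha_1,\alpha_2) = (3q_1-q_2)/2$, which is precisely~\eqref{eqn:necCondRank7}. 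This is a routine simplification.

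For the rank statement, assume $\gcd(q_1,q_2)=1$. By Corollary~\ref{cor:coherentRanklem} the coherent rank is at most $m+3 = 7$, so by Lemma~\ref{lem:schurCC} it suffices to show that the minimal Schur ring $\mathcal S$ containing $\underline{\mathsf C_1(\alpha_1)}+\underline{\mathsf D_0(\alpha_1,\alpha_2)}$ has rank exactly $7$. Lemma~\ref{lem:fullRank} (with $m=4$) reduces this to verifying that $\mathcal X^{(4)}_{0,0,1}$, $\mathcal X^{(4)}_{0,0,2}$, $\mathcal X^{(4)}_{0,0,3}$ are pairwise distinct, since distinctness yields all seven basic sets $\{(0,0)\},\mathsf C_1(\alpha_1),\mathsf C_2(\alpha_2),\mathsf D_0(\alpha_1,\alpha_2),\dots,\mathsf D_3(\alpha_1,\alpha_2)$. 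From the displayed formulas their pairwise differences are $(-1)^{n_1}\Psi_4(\alpha_1,\alpha_2)/2$ (twice) and $(-1)^{n_1}\Psi_4(\alpha_1,\alpha_2)$, so the entire problem collapses to proving $\Psi_4(\alpha_1,\alpha_2)\neq 0$.

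The crux is therefore the inequality $u_4(q_1)v_4(\alpha_2)\neq u_4(q_2)v_4(\alpha_1)$, which I would prove by contradiction, following the strategy of Corollary~\ref{cor:rank6}. If $v_4(\alpha_2)\neq 0$, then squaring the putative equality and eliminating $u_4(q_i)^2$ via $u_4(q_i)^2 = q_i - 4v_4(\alpha_i)^2$ makes the quartic terms cancel, leaving $q_1 v_4(\alpha_2)^2 = q_2 v_4(\alpha_1)^2$; coprimality then forces $q_1 \mid v_4(\alpha_1)^2$, say $v_4(\alpha_1)^2 = sq_1$ with $s\geqslant 1$, whence $q_1 = u_4(q_1)^2 + 4sq_1$ gives $u_4(q_1)^2 = q_1(1-4s) < 0$, a contradiction. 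The remaining case $v_4(\alpha_2)=0$ is the genuinely new obstacle, and it is where the order-$4$ argument departs from the order-$3$ one: here $v_4(\alpha_1)=0$ as well (since $u_4(q_2)\equiv 1\not\equiv 0\pmod 4$ forces the vanishing factor to be $v_4(\alpha_1)$), so $q_1$ and $q_2$ are perfect squares of primes congruent to $3\pmod 4$ rather than powers of $2$, and such squares can be coprime, so the divisibility contradiction above is unavailable. Instead I would feed $v_4(\alpha_1)=v_4(\alpha_2)=0$ into the Neumaier condition~\eqref{eqn:necCondRank7}, which reduces to $\bigl(3u_4(q_1)+u_4(q_2)\bigr)\bigl(u_4(q_1)-u_4(q_2)\bigr)=0$; the two factors yield $q_2 = 9q_1$ and $q_1=q_2$ respectively, each contradicting $\gcd(q_1,q_2)=1$. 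Hence $\Psi_4(\alpha_1,\alpha_2)\neq 0$, the three cyclotomic sums are pairwise distinct, and the coherent rank is exactly $7$.
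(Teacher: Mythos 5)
Your proof is correct and follows essentially the same route as the paper: Theorem~\ref{thm:main} together with the order-$4$ cyclotomic expressions gives the equivalence with \eqref{eqn:necCondRank7}, Corollary~\ref{cor:coherentRanklem} gives the upper bound, and Lemma~\ref{lem:fullRank} reduces the rank claim to $\Psi_4(\alpha_1,\alpha_2)\neq 0$, which you establish by the same two-case contradiction (the divisibility argument when $v_4(\alpha_2)\neq 0$, and the Neumaier condition when $v_4(\alpha_2)=0$). Your treatment of the $v_4(\alpha_2)=0$ case, via the factorisation $\bigl(3u_4(q_1)+u_4(q_2)\bigr)\bigl(u_4(q_1)-u_4(q_2)\bigr)=0$, is in fact a more explicit (and more accurately stated) version of the paper's terse one-line appeal to coprimality, and your use of $q_1=u_4(q_1)^2+4v_4(\alpha_1)^2$ corrects a small typo in the paper's corresponding step.
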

\begin{proof}
Equation~\eqref{eqn:necCondRank7} follows from Theorem~\ref{thm:main} and \eqref{eqn:particular3}.
    By Corollary~\ref{cor:coherentRanklem}, the coherent rank of $\Gamma_m(\alpha_1,\alpha_2)$ is at most $7$.
    It therefore suffices to show that the coherent rank of $\Gamma$ is at least $7$.
    
Suppose that $\mathcal X^{(4)}_{0,0,1}(\alpha_1,\alpha_2) = \mathcal X^{(4)}_{0,0,3}(\alpha_1,\alpha_2)$ then $u_4(q_1)v_4(\alpha_2) = u_4(q_2)v_4(\alpha_1)$.
In the case where $v_4(\alpha_2) = 0$, we must also have $v_4(\alpha_1) = 0$, since $u_4(q_2) \equiv 1 \pmod 4$.
    Using \eqref{eqn:necCondRank7}, it follows that $q_1$ is a multiple of $q_2$, a contradiction.
    Otherwise, suppose $v_4(\alpha_2) \ne 0$.
    Then $u_4(q_1) = u_4(q_2)v_4(\alpha_1)/v_4(\alpha_2)$.
    It follows that $q_1v_4(\alpha_2)^2 = v_4(\alpha_1)^2q_2$.
    Now if $q_1$ does not divide $q_2$ then $q_1$ must divide $v_4(\alpha_1)^2$, i.e., $v_4(\alpha_1)^2 = sq_1$ for some positive integer $s$.
    But then $4q_1 = u_4(q_1)^2 + 4sq_1$, which contradicts the fact that $u_4(q_1)^2 > 0$.

    Therefore, we may assume that $\mathcal X^{(4)}_{0,0,1}(\alpha_1,\alpha_2)$, $\mathcal X^{(4)}_{0,0,2}(\alpha_1,\alpha_2)$, and $\mathcal X^{(4)}_{0,0,3}(\alpha_1,\alpha_2)$ are pairwise distinct.
    The lemma then follows from Lemma~\ref{lem:fullRank}.
\end{proof}

\begin{question}
    \label{prob:inf7}
    Do there exist infinitely many pairs of coprime prime powers $q_1 = 4n_1+1$ and $q_2 = 4n_2+1$ with $n_1 \equiv n_2 \pmod 2$ for which \eqref{eqn:necCondRank7} is satisfied for some primitive elements $\alpha_1$ and $\alpha_2$ of $\operatorname{GF}(q_1)$ and $\operatorname{GF}(q_2)$, respectively?
\end{question}

One can, analogously to Lemma~\ref{lem:condBound}, show that, for each prime power $q_1$, there are finitely many prime powers $q_2$ for which \eqref{eqn:necCondRank7} is satisfied.
Indeed, the proof of the following lemma can be obtained \textit{mutatis mutandis} from the proof of Lemma~~\ref{lem:condBound}.

\begin{lemma}
\label{lem:condBound4}
Let $q_1$ and $q_2$ be prime powers, each congruent to $1 \pmod {4}$ and let $\alpha_1$ and $\alpha_2$ be primitive elements of $\operatorname{GF}(q_1)$ and $\operatorname{GF}(q_2)$, respectively.
Suppose that \eqref{eqn:necCondRank7} is satisfied.
Then $(u_4(q_2)+u_4(q_1))^2 \leqslant 4q_1$ and $(v_4(\alpha_2)+v_4(\alpha_1))^2 \leqslant q_1$.
\end{lemma}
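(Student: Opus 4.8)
The plan is to follow the proof of Lemma~\ref{lem:condBound} almost verbatim: the whole statement reduces to a single exact quadratic identity, after which both inequalities drop out from nonnegativity. The only change from the order-$3$ situation is that the relevant norm form is now $q = u_4(q)^2 + 4v_4(q)^2$ rather than $4q = u_3(q)^2 + 27v_3(q)^2$, which is precisely why the bounds here carry the coefficient $4$ (with unshifted $u_4(q_1)$ and $v_4(\alpha_1)$) in place of the factors $9$, $1/3$ and the halved terms in Lemma~\ref{lem:condBound}.

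First I would record the two defining relations $q_i = u_4(q_i)^2 + 4v_4(\alpha_i)^2$ for $i \in \{1,2\}$; these hold because $v_4(\alpha_i)$ is merely a signed choice of $v_4(q_i)$, so that $v_4(\alpha_i)^2 = v_4(q_i)^2$. Together with \eqref{eqn:necCondRank7}, rewritten as $u_4(q_1)u_4(q_2) + 4v_4(\alpha_1)v_4(\alpha_2) = (3q_1 - q_2)/2$, these are the only inputs.

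The key step is to evaluate the binary quadratic form $x^2 + 4y^2$ at the ``sum point'' $(x,y) = (u_4(q_2)+u_4(q_1),\, v_4(\alpha_2)+v_4(\alpha_1))$ and group the resulting six terms into the norm relation for $q_2$, twice the cross term supplied by \eqref{eqn:necCondRank7}, and the norm relation for $q_1$:
\[
(u_4(q_2)+u_4(q_1))^2 + 4(v_4(\alpha_2)+v_4(\alpha_1))^2 = q_2 + 2\cdot\frac{3q_1 - q_2}{2} + q_1 = 4q_1,
\]
where the middle equality substitutes the two norm relations and \eqref{eqn:necCondRank7}. This exact identity is the crux of the argument.

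Finally, since each summand on the left is nonnegative, discarding $4(v_4(\alpha_2)+v_4(\alpha_1))^2 \geqslant 0$ yields $(u_4(q_2)+u_4(q_1))^2 \leqslant 4q_1$, and discarding $(u_4(q_2)+u_4(q_1))^2 \geqslant 0$ yields $4(v_4(\alpha_2)+v_4(\alpha_1))^2 \leqslant 4q_1$, i.e.\ $(v_4(\alpha_2)+v_4(\alpha_1))^2 \leqslant q_1$, as claimed. I expect no real obstacle: the computation is routine once the correct combination is written down. The only points needing care are the sign bookkeeping and the observation that replacing $v_4(q_i)$ by the sign-adjusted $v_4(\alpha_i)$ leaves the squared norm unchanged, so that \eqref{eqn:necCondRank7} can be inserted directly into the cross term.
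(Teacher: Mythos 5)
Your proof is correct and is essentially the paper's argument: the paper derives Lemma~\ref{lem:condBound4} \emph{mutatis mutandis} from Lemma~\ref{lem:condBound}, whose proof combines the same three inputs (the two norm relations $q_i = u_4(q_i)^2 + 4v_4(\alpha_i)^2$ and the cross-term relation \eqref{eqn:necCondRank7}) into an equivalent factored quadratic identity and then bounds each side by the other's extremum, and completing the square in that factored identity yields precisely your identity $(u_4(q_2)+u_4(q_1))^2+4(v_4(\alpha_2)+v_4(\alpha_1))^2=4q_1$. Your direct sum-of-squares formulation, followed by discarding a nonnegative term, is simply a cleaner packaging of the same computation.
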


In Table~\ref{tab:list7}, we list pairs of prime powers $q_1 = 4n_1+1$ and $q_2 = 4n_2+1$ with $n_1 \equiv n_2 \pmod 2$ that satisfy \eqref{eqn:necCondRank7}, where $q_1$ is at most $250$.

\begin{table}[htbp]
    \centering
    \begin{tabular}{c|c || c | c || c | c || c | c}
        $q_1$ & $q_2$ &  $q_1$ & $q_2$  &  $q_1$ & $q_2$ &  $q_1$ & $q_2$  \\
        \hline
         5 & 13 &   29& 229  &     89& 601 &   125 & 1093 \\
         5 & 37 &   41& 169  &    101& 109 &   149& 541 \\ 
        17 & 25 &   41& 241  &    109& 181 &   169& 1321 \\
        25 & 97 &   61& 349  &    113& 625 &   181& 829 \\
        29& 61  &   89& 289  &    125& 157 &   229& 2029 
    \end{tabular}
    \caption{Pairs of prime powers $q_1 = 4n_1+1$ and $q_2 = 4n_2+1$ with $n_1 \equiv n_2 \pmod 2$ for which \eqref{eqn:necCondRank7} is satisfied for some primitive elements $\alpha_1$ and $\alpha_2$ of $\operatorname{GF}(q_1)$ and $\operatorname{GF}(q_2)$, respectively.}
    \label{tab:list7}
\end{table}
\subsection{Beyond coherent rank 7}

In this section, we conclude with three lines of inquiry that have emerged naturally from our investigation.

\paragraph{Sums of products of cyclotomic numbers.}

For small values of $m$, closed-form expressions exist for the corresponding cyclotomic numbers.
For example, when $m=3$ or $m = 4$, we have Theorem~\ref{thm:ord3cyc} and Theorem~\ref{thm:ord4cyc}.
In these cases, we can obtain closed-form expressions for $\mathcal X_{i,j,k}^{(m)}(\alpha_1,\alpha_2)$ and establish stronger forms of Theorem~\ref{thm:main}.

\begin{problem}
     \label{prob:X}
    Find an explicit general closed-form expression for $\mathcal X_{i,j,k}^{(m)}(\alpha_1,\alpha_2)$, where $\alpha_1$ and $\alpha_2$ are primitive elements for $\operatorname{GF}(q_1)$ and $\operatorname{GF}(q_2)$, respectively.
\end{problem}

Denote by $\varphi(\cdot)$ Euler's totient function.
Let $q_1 = 1+mn_1$ and $q_2 = 1+mn_2$ be prime powers.
Let $\alpha_1$ and $\alpha_2$ be primitive elements of $\operatorname{GF}(q_1)$ and $\operatorname{GF}(q_2)$ respectively.
Based on the expressions we have for $m=3$, $4$, $5$, and $6$, using \cite[Theorem 11]{cyc5} and \cite[Theorem 15]{Storer1967} we conjecture that
\[
\mathcal X_{0,0,0}^{(m)}(\alpha_1,\alpha_2) = \frac{(q_1-2)(q_2-2) +(m-1)\left (3+(m-2)\mathbf v(\alpha_1)^\transpose D_m \mathbf v(\alpha_2) \right )}{m^2} 
\]
for some vectors $\mathbf v(\alpha_1), \mathbf v(\alpha_2) \in \mathbb Z^{\varphi(m)}$ and a diagonal positive integer matrix $D_m$ satisfying $q_1 = \mathbf v(\alpha_1)^\transpose D_m \mathbf v(\alpha_1)$ and $q_2 = \mathbf v(\alpha_2)^\transpose D_m \mathbf v(\alpha_2)$.

\paragraph{Large coherent rank.} 
In this paper, our focus has largely been on strictly Neumaier graphs that have the smallest possible coherent rank.
However, it is also natural to consider how large a Neumaier graph's coherent rank can be.

\begin{question}
    \label{prob:highRank}
    For each integer $r \geqslant 6$ does there exist a Neumaier graph having coherent rank $r$?
\end{question}

The Neumaier graphs $F_{(\pi_2,\dots,\pi_t)}(\operatorname{Cay}(\mathbb Z/pq\mathbb Z,\mathsf K_{0}(\alpha)))$ from Theorem~\ref{thm:adbkwz} provide promising candidates to answer Question~\ref{prob:highRank} in the affirmative.
Indeed, Abiad et al.~\cite{abiad2023infinite} have shown that there exist infinitely many such graphs or the form $F_{(\pi_2,\dots,\pi_t)}(\operatorname{Cay}(\mathbb Z/pq\mathbb Z,\mathsf K_{0}(\alpha)))$ when $q \equiv 1 \pmod 6$.
Based on cursory empirical evidence, a conservative lower bound for the coherent rank of such graphs appears to be $q+2$.
One expects even stronger lower bounds when $t > 1$.

Alternatively, the Neumaier graphs $\Gamma_m(\alpha_1,\alpha_2)$ also provide candidates by taking $m$ to be arbitrarily large.
However, in light of Theorem~\ref{thm:main}, we face two associated challenges.
Firstly, finding pairs of prime powers $q_1 = n_1m+1$ and $q_2=n_2m+1$ that satisfy \eqref{eqn:edgeregular}.
Secondly, verifying a sufficient condition, such as the one in Lemma~\ref{lem:fullRank}, to ensure a large coherent rank.

\paragraph{Neumaier graphs of each nexus.}

In Table~\ref{tab:list}, there is at least one example of a Neumaier graph whose nexus is equal to 1, 2, 4, 6, 10, and 16.
In Table~\ref{tab:list7}, there is at least one example of a Neumaier graph whose nexus is equal to 1, 4, 6, 7, 10, and 15.
Before this work, the literature only contained examples of strictly Neumaier graphs whose nexus is a power of $2$~\cite{EGP19}.
\begin{table}[h!tbp]
    \centering
    \begin{tabular}{c|l }
        Nexus &  $(m; q_1,q_2)$  \\
        \hline
         \multirow{2}{*}{1} & $(3;4,7)$, $(3;4,13)$, $(4;5,13)$, $(4;5,37)$, $(6;7,79)$, $(6;7,103)$, $(7;8,29)$, $(7;8,43)$, \\
         & $(7;8,71)$, $(7;8,127)$, $(10;11,131)$  \\
         \hline
         2 & $(3;7,16)$, $(3;7,19)$, $(5;11,41)$, $(5;11,101)$, $(6;13,37)$, $(9;19,487)$\\
         \hline
         3 & $(5;16,31)$, $(5;16,61)$, $(5;16,121)$, $(8;25,313)$, $(10;31,311)$, $(10;31,631)$  \\
         \hline
         \multirow{2}{*}{4} & $(3;13,16)$, $(3;13,49)$, $(4;17,25)$, $(7;29,113)$, $(7;29,449)$, $(9;37,181)$, \\
         & $(9;37,1171)$, $(10;41,401)$, $(10;41,601)$, $(10;41,1481)$ \\
         \hline
         5 & $(6;31,127)$, $(8;41,137)$  \\
         \hline
         \multirow{2}{*}{6} & $(3;19,67)$, $(4;25,97)$, $(5;31,181)$, $(5;31,211)$, $(5;31,256)$, $(8;49,337)$, $(10;61,701)$, \\
         & $(10;61,2221)$\\
         \hline
         \multirow{3}{*}{7} & $(4;29,61)$, $(4;29,229)$, $(6;43,691)$, $(9;64,199)$, $(9;64,307)$, $(9;64,343)$, $(9;64,613)$, \\
         & $(9;64,631)$, $(9;64,739)$, $(9;64,829)$, $(9;64,991)$, $(9;64,1009)$, $(9;64,1063)$, \\ & $(9;64,1153)$, $(9;64,2197)$ \\
         \hline
         8 &  $(5;41,71)$, $(5;41,131)$, $(6;49,73)$, $(6;49,1201)$, $(9;73,1621)$ \\
         \hline
         9 &   $(7;64,757)$, $(7;64,883)$, $(7;64,1583)$\\
         \hline
         10 & $(3;31,43)$, $(4;41,169)$, $(4;41,241)$, $(6;61,349)$, $(6;61,1237)$  \\
         \hline
         11 &  $(6;67,139)$ \\
         \hline
         12 & $(3;37,73)$, $(5;61,211)$, $(5;61,256)$, $(5;61,331)$, $(5;61,421)$, $(6;73,673)$, $(9;109,739)$ \\
         \hline
         13 &  $(6;79,1879)$, $(10;131,691)$, $(10;131,1091)$\\
         \hline
         14 &  $(5;71,281)$, $(5;71,461)$, $(9;127,397)$ \\
         \hline
         15 & $(4;61,349)$, $(10;151,431)$ \\
         \hline
         16 & $(3;49,193)$, $(6;97,1249)$  \\
         \hline
		 17 & $(6;103,2503)$, $(8;137,953)$ \\
         \hline
		 18 & $(10;181,1061)$\\
         \hline
		 19 & $(10;191,271)$, $(10;191,751)$\\
         \hline
		 20 & $(5;101,311)$, $(3;61,97)$, $(9;181,2161)$ \\
    \end{tabular}
    \caption{Table indicating the existence strictly Neumaier graphs $\Gamma_m(\alpha_1,\alpha_2)$ that have a given nexus at most $20$ for $m \leqslant 10$ and $q_2 \leqslant 5000$.}
    \label{tab:listNexus}
\end{table}

For suitably chosen primitive elements $\alpha_1$ and $\alpha_2$ of $\operatorname{GF}(16)$ and $\operatorname{GF}(31)$, respectively, one can check that $\Gamma_5(\alpha_1,\alpha_2)$ is a strictly Neumaier graph with nexus equal to $3$.
Furthermore, in Table~\ref{tab:listNexus}, we list Neumaier graphs $\Gamma_m(\alpha_1,\alpha_2)$ that have a given nexus for $m \leqslant 10$ and nexus at most 20.
In the table, we list the triple $(m;q_1,q_2)$ which corresponds to the graph $\Gamma_m(\alpha_1,\alpha_2)$ for suitably chosen primitive elements $\alpha_1$ and $\alpha_2$ of $\operatorname{GF}(q_1)$ and $\operatorname{GF}(q_2)$, respectively.

Based on Table~\ref{tab:listNexus}, since for each $e \in \{1,\dots,20\}$, we are able to find a Neumaier graph with nexus $e$, it is natural to ask the following question.

\begin{question}
    \label{prob:Nexus}
    For each integer $e \geqslant 1$, does there exist a strictly Neumaier graph having nexus equal to $e$?
\end{question}
 \section{Acknowledgements}

The first author extends his gratitude to Misha Klin, who inspired him to look at the coherent closure of the Neumaier graphs constructed in \cite{GreavesKoolen1}.
The authors have also benefitted from conversations with Jack Koolen, Ka Hin Leung, and Bernhard Schmidt. 
The first author was partially supported by the Singapore Ministry of Education Academic Research Fund; grant numbers: RG14/24 (Tier 1) and MOE-T2EP20222-0005 (Tier 2).

\bibliographystyle{amsplain}
\bibliography{bibliography}

\providecommand{\bysame}{\leavevmode\hbox to3em{\hrulefill}\thinspace}
\providecommand{\MR}{\relax\ifhmode\unskip\space\fi MR }
\providecommand{\MRhref}[2]{%
  \href{http://www.ams.org/mathscinet-getitem?mr=#1}{#2}
}
\providecommand{\href}[2]{#2}
\begin{thebibliography}{10}

\bibitem{abiad2023infinite}
A.~Abiad, W.~Castryck, M.~De~Boeck, J.H. Koolen, and S.~Zeijlemaker, \emph{An
  infinite class of {N}eumaier graphs and non-existence results}, Journal of
  Combinatorial Theory, Series A \textbf{193} (2023), 105684.

\bibitem{abiad2021neumaier}
A.~Abiad, B.~De~Bruyn, J.~D'haeseleer, and J.H. Koolen, \emph{Neumaier graphs
  with few eigenvalues}, Designs, Codes and Cryptography (2021), 1--17.

\bibitem{BAUMERT1982}
L.D. Baumert, W.H. Mills, and R.L. Ward, \emph{Uniform cyclotomy}, Journal of
  Number Theory \textbf{14} (1982), no.~1, 67--82.

\bibitem{deza}
R.~Bildanov, V.~Panshin, and G.~Ryabov, \emph{On {WL}-rank and {WL}-dimension
  of some {D}eza circulant graphs}, Graphs and Combinatorics \textbf{37}
  (2021), 2397--2421.

\bibitem{BSRG}
A.E. Brouwer, \emph{Distance regular graphs of diameter 3 and strongly regular
  graphs}, Discrete Mathematics \textbf{49} (1984), no.~1, 101--103.

\bibitem{BCN}
A.E. Brouwer and W.H. Haemers, \emph{Distance-regular graphs}, Springer, 2012.

\bibitem{cc}
G.~Chen and I.~Ponomarenko, \emph{Coherent configurations}, Central China
  Normal University Press, 2019.

\bibitem{cyc5}
J.S. Chung, Y.S. Kim, T.H Lim, J.S. No, and H.~Chung, \emph{Cyclotomic numbers
  of order 5 over ${F}_{p^n}$}, Proceedings. International Symposium on
  Information Theory, 2005. ISIT 2005., 2005, pp.~1962--1966.

\bibitem{CHURIKOV22}
D.~Churikov and G.~Ryabov, \emph{On {WL}-rank of {D}eza {C}ayley graphs},
  Discrete Mathematics \textbf{345} (2022), no.~2, 112692.

\bibitem{vDM08}
E.R.~van {D}am and M.~Muzychuk, \emph{Some implications on amorphic association
  schemes}, Journal of Combinatorial Theory, Series A \textbf{117} (2010),
  no.~2, 111--127.

\bibitem{Ding15}
C.S. Ding, \emph{Codes from difference sets}, World Scientific, 2014.

\bibitem{evans2021general}
R.J. Evans, S.~Goryainov, E.V. Konstantinova, and A.D. Mednykh, \emph{A general
  construction of strictly {N}eumaier graphs and a related switching}, Discrete
  Mathematics \textbf{346} (2023), no.~7, 113384.

\bibitem{EGP19}
R.J. Evans, S.~Goryainov, and D.~Panasenko, \emph{The smallest strictly
  {N}eumaier graph and its generalisations}, Electr. J. Combin. \textbf{26}
  (2019), \#P2.29.

\bibitem{FERNANDEZ2010}
G.A. Fernández-Alcober, R.~Kwashira, and L.~Martínez, \emph{Cyclotomy over
  products of finite fields and combinatorial applications}, European Journal
  of Combinatorics \textbf{31} (2010), no.~6, 1520--1538, Geometric and
  Algebraic Combinatorics: Papers presented at the conference GAC4 (Oisterwijk
  2008).

\bibitem{quopol}
M.A. Fiol, \emph{Quotient-polynomial graphs}, Linear Algebra and its
  Applications \textbf{488} (2016), 363--376.

\bibitem{GH}
C.D. Godsil and A.D. Hensel, \emph{Distance regular covers of the complete
  graph}, Journal of Combinatorial Theory, Series B \textbf{56} (1992), no.~2,
  205--238.

\bibitem{GreavesKoolen1}
G.R.W. Greaves and J.H. Koolen, \emph{Edge-regular graphs with regular
  cliques}, European Journal of Combinatorics \textbf{71} (2018), 194--201.

\bibitem{GreavesKoolen2}
\bysame, \emph{Another construction of edge-regular graphs with regular
  cliques}, Discrete Mathematics \textbf{342} (2019), no.~10, 2818--2820.

\bibitem{GreavesYip}
G.R.W. Greaves and J.~Yip, \emph{The coherent rank of a graph with three
  eigenvalues}, arXiv:2406.17395 (2024), 36 pages.

\bibitem{Higman1987}
D.G. Higman, \emph{Coherent algebras}, Linear Algebra Appl. \textbf{93} (1987),
  209--239.

\bibitem{jazaeri2023neumaiercayleygraphs}
M.~Jazaeri, \emph{Neumaier {C}ayley graphs}, arXiv:2308.11572 (2023).

\bibitem{li2025}
S.X. Li, J.A. Davis, S.~Huczynska, L.~Johnson, and J.~Polhill, \emph{Strongly
  regular graphs with generalized {D}enniston and dual generalized {D}enniston
  parameters}, arXiv:2501.18830 (2025).

\bibitem{MOMIHARA}
K.~Momihara, \emph{Certain strongly regular {C}ayley graphs on $\mathbb
  {F}_{2^{2(2s+1)}}$ from cyclotomy}, Finite Fields and Their Applications
  \textbf{25} (2014), 280--292.

\bibitem{muzychuk1999isomorphism}
M.E. Muzychuk, M.H. Klin, and R.~P{\"o}schel, \emph{The isomorphism problem for
  circulant graphs via {S}chur ring theory}, Codes and association schemes
  \textbf{56} (1999), 241--264.

\bibitem{Myerson81}
G.~Myerson, \emph{Period polynomials and {G}auss sums for finite fields}, Acta
  Arithmetica \textbf{39} (1981), no.~3, 251--264 (eng).

\bibitem{Neumaier1981}
A.~Neumaier, \emph{Regular cliques in graphs and special 1½ designs}, London
  Mathematical Society Lecture Note Series, p.~244–259, Cambridge University
  Press, 1981.

\bibitem{Storer1967}
T.~Storer, \emph{Cyclotomy and difference sets}, Markham, 1967.

\bibitem{vandiver}
H.S. Vandiver, \emph{Limits for the number of solutions of certain general
  types of equations in a finite field}, Proceedings of the National Academy of
  Sciences \textbf{33} (1947), no.~8, 236--242.

\bibitem{weisfeiler1968reduction}
B.~Weisfeiler and A.~Leman, \emph{The reduction of a graph to canonical form
  and the algebra which appears therein}, NTI, Series \textbf{2} (1968), no.~9,
  12--16.

\bibitem{Whiteman}
A.L. Whiteman, \emph{{A family of difference sets}}, Illinois Journal of
  Mathematics \textbf{6} (1962), no.~1, 107 -- 121.

\bibitem{weilandt64}
H.~Wielandt, \emph{Finite permutation groups}, Academic Press, New York, 1964.

\end{thebibliography}

\end{document}